\documentclass[11pt]{article}

\usepackage[utf8]{inputenc}

\usepackage{color}
\usepackage{latexsym}
\usepackage{dsfont}
\usepackage{amssymb}
\usepackage{comment}
\usepackage{graphicx}
\usepackage{placeins}
\usepackage{amsmath,amsfonts,amssymb,euscript,array,enumerate,amsfonts,mathrsfs}
\usepackage{appendix}
\usepackage[T1]{fontenc}
\usepackage{babel}
\numberwithin{equation}{section}
\usepackage{bbm}
\usepackage{subfigure}
\usepackage{color}
\usepackage[authoryear]{natbib}
\usepackage{stmaryrd} 
\usepackage{amsthm}

\usepackage[algo2e,ruled,vlined]{algorithm2e} 
 \usepackage{algorithmicx}
\usepackage{algpseudocode}
\usepackage{ marvosym }
\usepackage{csquotes}
\usepackage{hyperref}
\usepackage{framed}

\usepackage{comment}

\usepackage{tikz}
\usetikzlibrary{fit,matrix,chains,positioning,decorations.pathreplacing,arrows}

\usepackage{mathtools}
\mathtoolsset{showonlyrefs}

\usepackage[a4paper,margin=1in,heightrounded]{geometry}

\usepackage[normalem]{ulem}

\def \trans{^{\scriptscriptstyle{\intercal}}}

\def \b1{\bf{1}}

\def \N{\mathbb{N}}
\def \R{\mathbb{R}}

\def \E{\mathbb{E}}

\def \d{\mathrm{d}}

\def\esssup_#1{\underset{#1}{\mathrm{ess\,sup\, }}}

\def\argmin_#1{\underset{#1}{\mathrm{argmin\, }}}
\def\argmax_#1{\underset{#1}{\mathrm{argmax\, }}}

\def \Dc{{\cal D}}

\def \eps{\varepsilon}

\def \d{\mathrm{d}}

\def \AC{\mathcal{A}_\mathcal{C}}

\def\beqs{\begin{eqnarray*}}
\def\enqs{\end{eqnarray*}}
\def\beq{\begin{eqnarray}}
\def\enq{\end{eqnarray}}

\newtheorem{Theorem}{Theorem}[section]

\newtheorem{Proposition}{Proposition}[section]
\newtheorem{Assumption}{Assumption}[section]
\newtheorem{Lemma}{Lemma}[section]
\newtheorem{Corollary}{Corollary}[section]
\newtheorem{Remark}{Remark}[section]
\newtheorem{Example}{Example}[section]
\numberwithin{equation}{section}

\title{A stochastic mirror descent algorithm for mean-field control problems}
\author{Noufel Frikha$\,^{a}$ and Aymen Haouas$\,^{b}$ \\ \\ \\ \\
\\ $^{a\,}$Universit\'e Paris 1 Panth\'eon-Sorbonne, Centre d’Economie de la Sorbonne, \\106 Boulevard de l’H\^opital, 75642 Paris Cedex 13, France, \\
email: noufel.frikha@univ-paris1.fr \\ \\
$^{b\,}$ Ecole Polytechnique, IP Paris, France,\\ email: aymen.haouas@polytechnique.edu \\\\ }
\date{October 2026}

\begin{document}
\maketitle
\begin{abstract}
We develop a stochastic mirror descent method for continuous-time
McKean--Vlasov control with measure-valued controls and general Bregman
regularisation. The dependence on the state law leads to a coupled
Wasserstein-Bregman stability problem and to a McKean-Vlasov adjoint equation,
which have no counterpart in classical stochastic control. We establish a
first-order representation of the cost and prove its relative smoothness in
two regimes: nonlinear dynamics with bounded cost derivatives, and affine
state-law dynamics with linearly growing cost derivatives, the latter covering
linear-quadratic (LQ) and more general nonquadratic models. Our regularity conditions in the
control measure are formulated through directional finite increments and do
not require second-order flat derivatives. We further introduce relative
semiconvexity along admissible forward--backward trajectories, allowing
compensation between the dynamics, running cost, and terminal cost beyond
pointwise Hamiltonian convexity. This yields energy dissipation, an
$O(1/n)$ convergence rate at the convexity threshold, and geometric
convergence when regularisation dominates relative nonconvexity. Explicit LQ
criteria and numerical experiments illustrate the scope of the results.
\end{abstract}

\noindent \textbf{Keywords: }
Mean-field control, stochastic mirror descent, measure-valued controls,
relative smoothness, Bregman divergence, linear-quadratic models.

\section{Introduction}

Mean-field control (MFC), also referred to as McKean--Vlasov (MKV)
control, studies the optimisation of large interacting populations under a
common objective. The state $X^\pi$ of a representative agent is governed by
the controlled MKV equation
\begin{align}\label{intro:SDEX}
\mathrm{d}X_s^\pi
= b_s(X_s^\pi,\mathcal{L}(X_s^\pi),\pi_s)\,\mathrm{d}s
+ \sigma_s(X_s^\pi,\mathcal{L}(X_s^\pi),\pi_s)\,\mathrm{d}W_s,
\quad s\in[0,T],
\end{align}
where $W$ is a $d'$-dimensional Brownian motion,
$\mathcal L(X_s^\pi)$ is the population distribution, and $\pi$ is a
progressively measurable relaxed control with values in a convex subset
$\mathscr C$ of $\mathcal P(A)$. Such equations arise as mean-field limits of
cooperative systems of weakly interacting agents; see
\cite{cardel:1,cardel:2}. For a regularisation parameter $\tau\geq0$, the
social planner minimises
\begin{equation}\label{intro:cost}
J^\tau(\pi)
= \mathbb{E}\!\left[
  \int_0^T\!\bigl(f_s(X_s^\pi,\mathcal{L}(X_s^\pi),\pi_s)+\tau\,h(\pi_s)\bigr)\mathrm{d}s
  + g(X_T^\pi,\mathcal{L}(X_T^\pi))
\right]
\end{equation}
over admissible controls $\pi\in\mathcal A_{\mathscr C}$. Here $f$ and $g$
are the running and terminal costs, while the convex functional $h$ induces
a geometry on the space of probability measures. Relative entropy,
$\chi^2$ divergence, and entropic optimal transport are representative
examples.

The principal difficulty is that a perturbation of the relaxed control changes both the coefficients of the
state equation and the law of its solution. The corresponding first-order
analysis involves an MKV adjoint BSDE with independent-copy terms, whereas
stability of the optimisation scheme requires simultaneous control of the
Bregman displacement of the policy and the Wasserstein displacement of the
state law. This interaction has no counterpart in classical stochastic
control.

\medskip
\noindent\textit{Numerical methods and learning for mean-field control.}
Dynamic programming leads to a Bellman equation on the Wasserstein space
\cite{phawei17}. Since direct discretisation is quickly intractable, recent
methods use particle approximations and neural representations adapted to
probability measures
\cite{gerlauphawar21,germikwar22,hanhulong22,daylauzha2024,soner:teichmann:yan25}.
The alternative Pontryagin route characterises the optimum through an
MKV-FBSDE and motivates iterative model-based solvers
\cite{cardel15,carlau22,phawar22,rei:stock:zhang}. These two approaches
provide powerful numerical tools, but they do not by themselves identify a
geometry on the space of randomised controls that yields global descent and
quantitative iteration complexity.

Randomised policies and entropy regularisation are natural in reinforcement
learning, where they promote exploration. Continuous-time policy-gradient
and actor--critic methods were developed first for controlled diffusions
\cite{wanzarzhou20,jiazhou21a,jiazhou21b,jiazhou23} and subsequently for MFC~\cite{frikha:germain:lauriere:pham:song},
\cite{bayraktar:hernandez:yan:zhu26}.
Convergence results are available for specific linear-quadratic (LQ)
mean-field models in discrete and continuous time
\cite{carmona2019linear,pmlr-v139-wang21j,frikha:pham:song}. For classical finite-horizon LQ control,
geometry-aware policy gradients can also accommodate certain indefinite
costs and Gaussian policies~\cite{giegrich:reisinger:zhang24}. These results
demonstrate the effectiveness of policy optimisation, but their convergence
analyses remain tied either to a finite-dimensional parametrisation or to a
specific LQ structure.

\medskip
\noindent\textit{Mirror descent and the remaining gap.}
Mirror descent replaces the Euclidean penalty of gradient descent by the
Bregman divergence of a convex functional $h$. Relative smoothness and
relative convexity then provide the appropriate descent and contraction
principles in measure spaces; see
\cite{NEURIPS2022_6e3daaec} for the static measure-valued setting. \cite{KERIMKULOV2025104765} establish this programme for finite-horizon
classical stochastic control with measure-valued controls. They prove
relative smoothness, dissipation, and sublinear or exponential convergence
for several Bregman geometries using sharp BMO estimates for the adjoint
BSDE. Their setting, however, does not include the endogenous law dependence
of MKV dynamics. Moreover, the bounded-cost-derivative regime used in their
relative-smoothness analysis excludes standard quadratic costs on unbounded
state spaces.

The existing literature therefore leaves a gap between two complementary
approaches. Policy-gradient and actor--critic methods for MFC provide
implementable algorithms, but their quantitative analyses are generally
tied to finite-dimensional parametrisations or specific LQ structures.
Conversely, mirror descent accommodates general measure-valued controls and
Bregman geometries, but the available stochastic-control theory does not
cover MKV interactions or the affine-growth regime needed for quadratic
costs. Pointwise Hamiltonian convexity is also poorly adapted to this
setting: when imposed for every adjoint pair, it forces the first-order
remainders of the drift and diffusion to vanish along the admissible
directions and leaves no mechanism for compensating negative curvature
between the dynamics and the costs.

\medskip
\noindent\textit{Overview of the results.}
We bridge this gap under two complementary regularity regimes. The first
allows nonlinear dynamics and bounded cost derivatives and relies on uniform
$H^\infty$ and BMO estimates for the adjoint process. The second exploits
affine dependence of the dynamics on the state and its law, replacing
boundedness of the cost derivatives by linear-growth assumptions. This
regime contains LQ models and more general nonquadratic costs. In the
entropy-regularised affine-LQ case, SMD preserves Gaussian policies, and
relative smoothness is proved directly on this invariant class rather than
on the full entropy domain.

We also replace pointwise Hamiltonian convexity by relative semiconvexity
along admissible trajectories. Its constant $\rho$ measures the negative
curvature of the unregularised objective in the Bregman geometry. Entropic
regularisation then acts as a convexification mechanism: the critical regime
$\tau=\rho$ yields an $O(1/n)$ rate, whereas $\tau>\rho$ gives geometric
convergence. A systemic-risk model with procyclical funding provides a
financially motivated example for which $\rho>0$: weak institutions
deleverage under funding pressure while better-capitalised institutions
expand, amplifying cross-sectional dispersion.

The abstract relative semiconvexity inequality is not, by itself, the main
substantive contribution: through the performance-difference identity it is
equivalent to a lower first-order bound for $J^0$. Its value here is that we
make it verifiable. For affine dynamics and quadratic costs, we reduce it to
an explicit mean--fluctuation quadratic form; this both recovers standard LQ
models and produces a computable strictly positive curvature defect in the
procyclical example. Together with the affine-growth smoothness theory and
its Gaussian-orbit specialisation, these criteria extend the framework of~\cite{KERIMKULOV2025104765}. In particular, they cover
genuinely mean-field and quadratic-growth settings.

Our main contributions are the following.

\begin{itemize}

\item \textbf{MKV first-order calculus and stability.}
We derive the first variation of the objective through an MKV adjoint BSDE
and establish the forward--backward stability estimates required by SMD. The
analysis separates a nonlinear bounded-derivative regime from an affine
state-law regime with linearly growing cost derivatives and
$S^2\times H^2$ estimates.

\item \textbf{Relative smoothness in Wasserstein--Bregman geometry.}
We control the nonlinear remainder of the cost by the Bregman displacement
of the control, including the induced Wasserstein displacement of the state
law. The regularity assumptions in the control measure are formulated
through finite directional remainders and avoid second-order flat
derivatives. For affine-LQ models, the estimate is established directly on
the invariant Gaussian orbit.

\item \textbf{Trajectory-wise semiconvexity and convexification.}
Instead of pointwise Hamiltonian convexity, we introduce relative
semiconvexity along admissible trajectories. It allows compensation between
the dynamics, running cost, and terminal cost. We prove energy dissipation
without convexity, an $O(1/n)$ rate at the threshold $\tau=\rho$, and
geometric convergence when $\tau>\rho$. For affine dynamics with quadratic
costs, we turn this abstract condition into an explicit mean--fluctuation
quadratic form. This reduction both recovers the standard LQ regime and
identifies financially meaningful models with strictly negative curvature
$\rho>0$.

\item \textbf{Affine-LQ models, regularisers, and examples.}
For entropy-regularised LQ control, we prove that a Gaussian initial policy
generates Gaussian SMD iterates and give the exact mean--precision recursion.
We verify the mirror step for relative entropy,
$\chi^2$ divergence, and entropic optimal transport, and illustrate the
theory with systemic-risk, optimal-trading, and control-dependent-volatility
models.

\end{itemize}

To the best of our knowledge, this is the first convergence analysis of
SMD algorithms for MKV control that combines measure-valued
controls, non-Euclidean Bregman geometries, and an affine-quadratic growth regime
covering entropy-regularised LQ models.

\medskip
\noindent\textit{Organisation of the paper.}
Section~\ref{sec:main-results} presents the SMD algorithm, the standing
assumptions, and the convergence theory. Section~\ref{sec:numerical-experiments}
presents the numerical experiments. A short conclusion precedes the proofs
of the main results collected in Section~\ref{sec:proofs}.

\section{Stochastic mirror descent: framework and convergence theory}
\label{sec:main-results}

\subsection{The stochastic mirror descent algorithm}

The coefficients $b$, $\sigma$, and $f$ are jointly measurable on
$[0,T]\times\mathbb R^d\times\mathcal P_2(\mathbb R^d)\times\mathscr C$,
$g$ is measurable on
$\mathbb R^d\times\mathcal P_2(\mathbb R^d)$, and
$X_0\in L^2(\Omega,\mathcal F_0,\mathbb P)$.

Assume that the convex regulariser $h$ admits a flat derivative. Thus, for
$m,m'\in\mathscr C$,
\[
\lim_{\varepsilon\downarrow0}
\frac{h(m+\varepsilon(m'-m))-h(m)}{\varepsilon}
=\int_A\frac{\delta h}{\delta m}(m,a)(m'-m)(\d a),
\]
where we use the normalisation
$\int_A(\delta h/\delta m)(m,a)m(\d a)=0$. 

Similarly, whenever a flat derivative of a time-dependent coefficient is used below, we
assume that it is chosen jointly Borel measurable in all its variables. Thus,
if $\varphi\in\{b,\sigma,f\}$ admits a flat derivative,
\[
(t,x,\mu,m,a)\longmapsto
\frac{\delta\varphi_t}{\delta m}(x,\mu,m,a)
\]
is jointly Borel measurable on its natural domain. Flat derivatives are
normalised by requiring their integral with respect to the differentiated
measure to vanish. This convention ensures that their evaluation along a
progressively measurable control and the associated state--adjoint processes
admits a progressively measurable version.

The Bregman divergence associated to $h$ is
\[
D_h(m'\mid m)
=h(m')-h(m)-\int_A\frac{\delta h}{\delta m}(m,a)(m'-m)(\d a).
\]
For progressively measurable $\mathscr C$-valued controls, set
\[
\mathcal D_h(\pi'\mid\pi)
:=\E\int_0^T D_h(\pi'_t\mid\pi_t)\,\d t.
\]
For  a fixed reference measure
$\nu_0\in\mathscr C$, the set of admissible controls is
\[
\mathcal A_{\mathscr C}
:=\left\{\pi:\Omega\times[0,T]\longrightarrow\mathscr C:
\begin{array}{l}
\pi\text{ is }\mathbb F\text{-progressively measurable},\\
\mathcal D_h(\pi\mid\nu_0)<\infty,\quad
\E\int_0^T|h(\pi_t)|\,\d t<\infty
\end{array}\right\},
\]
where the deterministic measure $\nu_0$ is identified with the constant
control $t\mapsto\nu_0$. Additional moment requirements will be stated
explicitly when needed in the linearly growing regime.

For $\tau\geq0$, define the Hamiltonian
\[
H_t^\tau(x,\mu,y,z,m)
=\langle b_t(x,\mu,m),y\rangle
+\operatorname{tr}\bigl(\sigma_t(x,\mu,m)^{\trans}z\bigr)
+f_t(x,\mu,m)+\tau h(m).
\]
For a control $\pi$, we write
\[
\Theta_t^\pi
:=(X_t^\pi,\mathcal L(X_t^\pi),Y_t^\pi,Z_t^\pi).
\]
The adjoint process solves
\begin{equation}\label{eq:adjoint-mkv-full}
\begin{cases}
\d Y_t^\pi=-\Bigl[
D_xH_t^0(\Theta_t^\pi,\pi_t)
+\widetilde\E\bigl[
(\partial_\mu H_t^0)
(\widetilde\Theta_t^\pi,\widetilde\pi_t)(X_t^\pi)
\bigr]\Bigr]\,\d t+Z_t^\pi\,\d W_t,\\[4pt]
Y_T^\pi=D_xg(X_T^\pi,\mathcal L(X_T^\pi))
+\widetilde\E\bigl[
(\partial_\mu g)(\widetilde X_T^\pi,
\mathcal L(X_T^\pi))(X_T^\pi)
\bigr],
\end{cases}
\end{equation}

\noindent with $\widetilde\Theta_t^\pi =(\widetilde X_t^\pi, \mathcal L(X_t^\pi), \widetilde Y_t^\pi, \widetilde Z_t^\pi)$ and where $(\widetilde X_t^\pi, \widetilde Y_t^\pi,  \widetilde Z_t^\pi,  \widetilde \pi_t)_{0\leq t \leq T}$ stands for an independent copy of $( X_t^\pi,  Y_t^\pi,   Z_t^\pi,  \pi_t)_{0\leq t \leq T}$.

Under the assumptions below, Lemma~\ref{lem:gateaux} gives the first-order
representation
\[
\d J^0(\pi)(\pi'-\pi):=\frac{\d}{\d\varepsilon}
J^0\bigl(\pi+\varepsilon(\pi'-\pi)\bigr)
\bigg|_{\varepsilon=0}
=\E\int_0^T\int_A
\frac{\delta H_t^0}{\delta m}(\Theta_t^\pi,\pi_t,a)
(\pi_t'-\pi_t)(\d a)\,\d t.
\]
Thus $a\mapsto(\delta H_t^0/\delta m)
(\Theta_t^\pi,\pi_t,a)$ is the stochastic action gradient. Starting from
$\pi^0$, SMD alternates between solving the MKV forward--backward system and
performing a pointwise mirror step.

\begin{center}
\begin{minipage}{1\textwidth}
\begin{framed}
\noindent\textbf{Stochastic Mirror Descent algorithm.}\\
\textbf{Input:} $\pi^0\in\mathcal A_{\mathscr C}$ and $\lambda>0$.\\
For $n=0,1,2,\ldots$:
\begin{enumerate}
\item Solve the state equation~\eqref{intro:SDEX} and the adjoint
equation~\eqref{eq:adjoint-mkv-full} under $\pi^n$.
\item For every $t\in[0,T]$, set
\begin{equation}\label{eq:MDupdate}
\pi_t^{n+1}\in\argmin_{m\in\mathscr C}
\left\{
\int_A\frac{\delta H_t^\tau}{\delta m}
(\Theta_t^{\pi^n},\pi_t^n,a)(m-\pi_t^n)(\d a)
+\lambda D_h(m\mid\pi_t^n)
\right\}.
\end{equation}
\end{enumerate}
\end{framed}
\end{minipage}
\end{center}

The parameter $\lambda$ is the inverse step size. The first term in
\eqref{eq:MDupdate} follows the first variation of the regularised objective,
whereas the Bregman term keeps the update in the geometry induced by $h$.

\subsection{Well-posedness of the state and adjoint equations}

We first collect the assumptions ensuring that, for every admissible control,
the state equation and the adjoint equation used by SMD are well posed. The
state equation is treated under a common Lipschitz framework. For the adjoint
equation, we distinguish a bounded-derivative regime, which yields estimates
uniform in the control, from a linear-growth regime covering quadratic costs,
where the $S^2\times H^2$ estimate is asserted for each fixed control.

\begin{Assumption}\label{ass:H1}
There exists $K<\infty$ such that for all $t\in[0,T]$, $x,x'\in\R^d$,
$\mu,\mu'\in\mathcal{P}_2(\R^d)$ and $m,m'\in \mathscr{C}$,
\begin{align*}
  |b_t(x,\mu,m)-b_t(x',\mu',m')|^2
  & + |\sigma_t(x,\mu,m)-\sigma_t(x',\mu',m')|^2
  \; \\
  & \le\; K\Bigl(|x-x'|^2+W_2(\mu,\mu')^2+D_h(m\mid m')\Bigr).
\end{align*}
\end{Assumption}

\begin{Assumption}\label{ass:H2}
There exists $\mu_0\in\mathcal{P}_2(\R^d)$ such that, for the reference
measure $\nu_0\in\mathscr C$ fixed above,
\[
  \int_0^T \!\bigl(|b_t(0,\mu_0,\nu_0)|^2
  + |\sigma_t(0,\mu_0,\nu_0)|^2\bigr)\, \d t < \infty.
\]
\end{Assumption}

We shall use two alternative regularity regimes for the adjoint equation. The
first one, stated next, imposes boundedness of all first-order derivatives. It
allows for general nonlinear state and law dependence in the coefficients and
is the natural setting for the uniform $H^\infty$ and BMO estimates established
below.

\begin{Assumption}[Bounded first-order derivatives]\label{ass:H3}
The maps $b$, $\sigma$, $f$ and $g$ are differentiable in $x$ and admit first order Wasserstein derivative in $\mu$.
Moreover, there exists $K<\infty$ such that, for every $\varphi\in\{b,\sigma,f\}$ and every
$(t,x,\mu,m,\xi)\in[0,T]\times\R^d\times\mathcal{P}_2(\R^d)\times \mathscr{C}\times\R^d$,
$$
  |D_x\varphi_t(x,\mu,m)| + |\partial_\mu\varphi_t(x,\mu,m)(\xi)| + |D_xg(x,\mu)| + |\partial_\mu g(x,\mu)(\xi)|
  \;\le\; K.
$$

\end{Assumption}

The preceding assumption excludes quadratic costs, whose first-order
derivatives are generally unbounded. Our second regime relaxes this condition:
the derivatives of the running and terminal costs are allowed to have linear
growth. The derivatives of the drift and diffusion coefficients are still
assumed to be bounded.

\begin{Assumption}[Linearly growing cost derivatives]\label{ass:H3gen}
The maps $b$ and $\sigma$ are differentiable in $x$ and admit a first-order
Lions derivative with respect to $\mu$. There exists $K<\infty$ such that
\[
  |D_x\varphi_t(x,\mu,m)| + |\partial_\mu\varphi_t(x,\mu,m)(\xi)| \;\le\; K,
  \qquad \varphi\in\{b,\sigma\}.
\]

The maps $f_t$ and $g$ are differentiable in $x$ and admit a first-order
Lions derivative with respect to $\mu$, and these derivatives are continuous
with respect to their entries. Denote by $d_A$ the metric on $A$, fix
$a_0\in A$, and set
\[
 M_2(\mu):=\left(\int_{\mathbb R^d}|v|^2\,\mu(\d v)\right)^{1/2},
 \qquad
 M_{2,A}(m):=\left(\int_A d_A(a,a_0)^2\,m(\d a)\right)^{1/2}.
\]
We assume that $\mathscr C\subset\mathcal P_2(A)$ and that every admissible
control considered in this regime satisfies
\[
 \E\int_0^T M_{2,A}(\pi_t)^2\,\d t<\infty.
\]
Moreover, for every
$(t,x,\mu,m,\xi)\in[0,T]\times\mathbb R^d\times
\mathcal P_2(\mathbb R^d)\times\mathscr C\times\mathbb R^d$,
\[
 |D_xf_t(x,\mu,m)|
 +|\partial_\mu f_t(x,\mu,m)(\xi)|
 \le K\bigl(1+|x|+M_2(\mu)+|\xi|+M_{2,A}(m)\bigr),
\]
and, for every $(x,\mu,\xi)\in\mathbb R^d\times
\mathcal P_2(\mathbb R^d)\times\mathbb R^d$,
\[
 |D_xg(x,\mu)|
 +|\partial_\mu g(x,\mu)(\xi)|
 \le K\bigl(1+|x|+M_2(\mu)+|\xi|\bigr).
\]

\end{Assumption}

In this second regime, the adjoint processes are naturally controlled in
$S^2\times H^2$, but they need not satisfy the uniform $H^\infty$ and BMO
bounds available under Assumption~\ref{ass:H3}. To obtain the stability estimate
of Lemma~\ref{lem:YZlipc} without invoking those stronger bounds, we complement
Assumption~\ref{ass:H3gen} with the following structural condition on the
dynamics.

\begin{Assumption}[Affine state-mean structure]\label{ass:affine-dynamics}
There exist bounded measurable maps
\[
 B:[0,T]\to\mathbb R^{d\times d},
 \qquad
 \Sigma:[0,T]\to\mathbb R^{d\times d\times d'},
\]
and
\[
 \overline B:[0,T]\to\mathbb R^{d\times d},
 \qquad
 \overline\Sigma:[0,T]\to\mathbb R^{d\times d\times d'},
\]
such that, for every
$(t,x,\mu,m,\xi)\in[0,T]\times\mathbb R^d
\times\mathcal P_2(\mathbb R^d)\times\mathscr C\times\mathbb R^d$,
\[
 D_xb_t(x,\mu,m)=B_t,
 \qquad
 D_x\sigma_t(x,\mu,m)=\Sigma_t,
\]
and
\[
 \partial_\mu b_t(x,\mu,m)(\xi)=\overline B_t,
 \qquad
 \partial_\mu\sigma_t(x,\mu,m)(\xi)
       =\overline\Sigma_t.
\]
Equivalently, up to terms depending only on $(t,m)$, the coefficients $b_t$
and $\sigma_t$ are affine functions of the state $x$ and of the mean
$\int_{\mathbb R^d}\xi\,\mu(\d\xi)$ of the state law.
\end{Assumption}

\begin{Remark}
Assumptions~\ref{ass:H3} and
\ref{ass:H3gen}--\ref{ass:affine-dynamics} describe two alternative regimes,
rather than cumulative requirements. Assumption~\ref{ass:H3} accommodates
nonlinear dynamics but requires bounded cost derivatives. The combination of
Assumptions~\ref{ass:H3gen} and~\ref{ass:affine-dynamics} allows cost derivatives
with linear growth and, in particular, covers the standard LQ framework as
well as nonquadratic costs with globally Lipschitz first-order derivatives.
Assumption~\ref{ass:affine-dynamics} concerns only the state and law derivatives
of $b$ and $\sigma$; their dependence on the control variable is governed by
the assumptions introduced later.
\end{Remark}

\begin{Proposition}\label{prop:wellposed}
Under Assumptions~\ref{ass:H1} and~\ref{ass:H2}, for every
$\pi\in\mathcal{A}_{\mathscr{C}}$ and every initial condition
$X_0^{\pi}\in L^2(\mathbb{P})$, the state MKV-SDE~\eqref{intro:SDEX}
admits a unique strong solution such that
\[
 \mathbb{E}\left[\sup_{0\leq t\leq T}|X_t^{\pi}|^2\right]<\infty.
\]
Moreover, the following assertions hold:
\begin{enumerate}
\item Under the additional Assumption~\ref{ass:H3}, the MKV
adjoint BSDE~\eqref{eq:adjoint-mkv-full} admits a unique solution in
$S^2\times H^2$, and
\begin{equation}\label{estimates:bsde:S2:H2:uniform:pi}
\sup_{\pi\in\mathcal{A}_{\mathscr{C}}}
\mathbb{E}\left[
 \sup_{0\leq t\leq T}|Y_t^{\pi}|^2
 +\int_0^T|Z_t^{\pi}|^2\,\d t
\right]<\infty.
\end{equation}
\item If, instead, Assumption~\ref{ass:H3gen} holds, then, for every fixed
$\pi\in\mathcal{A}_{\mathscr{C}}$, the adjoint
BSDE~\eqref{eq:adjoint-mkv-full} admits a unique solution in
$S^2\times H^2$; equivalently,
\begin{equation}\label{estimates:bsde:S2:H2:fixed:pi}
\mathbb{E}\left[
 \sup_{0\leq t\leq T}|Y_t^{\pi}|^2
 +\int_0^T|Z_t^{\pi}|^2\,\d t
\right]<\infty.
\end{equation}
The bound in~\eqref{estimates:bsde:S2:H2:fixed:pi} may depend on $\pi$;
in particular, no uniform estimate over
$\mathcal{A}_{\mathscr{C}}$ is asserted in this second regime.
\end{enumerate}
\end{Proposition}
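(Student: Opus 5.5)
The argument has three parts. The state equation is handled by a contraction on the law flow. Each adjoint equation is a linear McKean--Vlasov BSDE, treated by a fixed point in $S^2\times H^2$. The only genuinely delicate point is the uniformity in $\pi$ under Assumption~\ref{ass:H3}.

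\textbf{State equation.} Fix $\pi\in\mathcal A_{\mathscr C}$ and a continuous flow $\mu=(\mu_t)\in C([0,T];\mathcal P_2(\R^d))$. Freezing the law, the map $x\mapsto b_t(x,\mu_t,\pi_t)$ is uniformly Lipschitz by Assumption~\ref{ass:H1}. It also has square-integrable size at $x=0$: again by Assumption~\ref{ass:H1},
\[
|b_t(0,\mu_t,\pi_t)|^2\le 2|b_t(0,\mu_0,\nu_0)|^2+2K\bigl(W_2(\mu_t,\mu_0)^2+D_h(\pi_t\mid\nu_0)\bigr).
\]
The right-hand side is integrable in $\d t\otimes\d\mathbb P$ by Assumption~\ref{ass:H2} and the admissibility condition $\mathcal D_h(\pi\mid\nu_0)<\infty$; the same holds for $\sigma$. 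Classical SDE theory with random Lipschitz coefficients then gives a unique strong solution $X^{\mu}$ with $\E\sup_t|X^\mu_t|^2<\infty$. Define $\Phi(\mu)_t=\mathcal L(X^\mu_t)$. Using $W_2(\mathcal L(X^\mu_t),\mathcal L(X^{\mu'}_t))^2\le\E|X^\mu_t-X^{\mu'}_t|^2$, It\^o's formula and Gronwall give
\[
\sup_{s\le t}W_2(\Phi(\mu)_s,\Phi(\mu')_s)^2\le C\int_0^t W_2(\mu_s,\mu'_s)^2\,\d s.
\]
Hence an iterate of $\Phi$ is a contraction (or $\Phi$ is a contraction under a weighted metric $e^{-\beta t}$), and its fixed point yields existence and uniqueness for~\eqref{intro:SDEX}.

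\textbf{Adjoint equation, common structure.} With $X^\pi$ fixed, the driver of~\eqref{eq:adjoint-mkv-full} is linear in $(Y,Z)$ and in its independent copy $(\widetilde Y,\widetilde Z)$:
\[
D_xb\trans Y+D_x\sigma\trans Z+D_xf+\widetilde\E\bigl[\partial_\mu b(\cdot)(X_t)\trans\widetilde Y+\partial_\mu\sigma(\cdot)(X_t)\trans\widetilde Z+\partial_\mu f(\cdot)(X_t)\bigr].
\]
Under either regime the coefficients multiplying $Y,Z,\widetilde Y,\widetilde Z$ are bounded by $K$. The term $\widetilde\E[\cdot]$ is Lipschitz in $(Y,Z)$ in the $L^2$ sense: $\E|\widetilde\E[\beta\widetilde Y]|^2\le K^2\E|Y|^2$. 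The standard contraction for MKV BSDEs (Buckdahn--Li--Peng type) with $e^{\beta t}$-weighted norms therefore gives a unique solution in $S^2\times H^2$, provided the free terms $D_xf$, $\widetilde\E[\partial_\mu f(\cdot)(X_t)]$ lie in $H^2$ and the terminal value lies in $L^2$.

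\textbf{The two regimes.} Under Assumption~\ref{ass:H3} the free terms are bounded by $K$. The a priori estimate obtained from It\^o's formula on $e^{\beta t}|Y_t|^2$, followed by BDG, gives $\E[\sup|Y|^2+\int|Z|^2]\le C(K,T)$, with $C$ independent of $\pi$ and of $X^\pi$. This is~\eqref{estimates:bsde:S2:H2:uniform:pi}. Under Assumption~\ref{ass:H3gen} the free terms are bounded by
\[
K\bigl(1+|X_t|+M_2(\mathcal L(X_t))+|\widetilde X_t|+M_{2,A}(\pi_t)\bigr),
\]
and after taking $\widetilde\E$ the copy contributes $\|X_t\|_{L^2}$. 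Their $H^2$ norm is finite by $\E\sup|X^\pi|^2<\infty$ and $\E\int M_{2,A}(\pi_t)^2\,\d t<\infty$, and the terminal term is in $L^2$ similarly. The resulting bound depends on $\pi$ through these moments, which is exactly~\eqref{estimates:bsde:S2:H2:fixed:pi}.

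\textbf{Main obstacle.} The step I expect to be hardest is the mean-field BSDE fixed point, specifically measurability and integrability of the copy-dependent driver evaluated at the random point $X_t$, which must be handled on a product space. To prove uniqueness in the MKV sense, I would first solve on the product space with the law of $(X,\pi)$ fixed and then apply the contraction.
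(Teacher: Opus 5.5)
Your proposal is correct and is essentially the standard argument the paper has in mind. The paper omits the proof, pointing to standard $S^2$ estimates for MKV SDEs and linear MKV BSDEs, Jensen's inequality for the independent-copy terms, uniform boundedness of the inhomogeneous terms under Assumption~\ref{ass:H3}, and square integrability from the state estimate and the control-moment condition under Assumption~\ref{ass:H3gen}, which is exactly your structure. The only slip is cosmetic: in $\widetilde\E[(\partial_\mu f_t)(\widetilde X_t,\mu_t,\widetilde\pi_t)(X_t)]$ the copy occupies the $x$ and $m$ slots and $X_t$ the $\xi$ slot, so the bound involves $|X_t|$, $\E|X_t|$ and $\E M_{2,A}(\pi_t)$ rather than $|\widetilde X_t|$ and $M_{2,A}(\pi_t)$; this changes nothing after Jensen.
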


The proof is omitted. It follows from the standard $S^2$ estimates for
MKV SDEs and linear MKV BSDEs; see, for instance,
\cite{cardel15}. The independent-copy terms are controlled by Jensen's
inequality. Under Assumption~\ref{ass:H3} the inhomogeneous terms of the
adjoint equation are uniformly bounded, whereas under
Assumption~\ref{ass:H3gen} their square integrability follows from the state
estimate and the control-moment condition, which explains why the latter
bound is only asserted for each fixed $\pi$.

\subsection{Relative smoothness, dissipation, and convergence}

We now turn from the well-posedness of the controlled McKean--Vlasov
forward--backward system to the stability and convergence of the SMD
iteration. The argument has three layers. We first establish relative
smoothness of the objective with respect to the Bregman geometry generated by
$h$, which yields energy dissipation. We then impose a trajectory-wise
relative semiconvexity condition under which the regularisation compensates
for the negative curvature of the unregularised objective. This makes the
first-order optimality condition sufficient for global optimality and leads
to quantitative convergence rates for the objective values. Finally, we
specialise these abstract conditions to affine and LQ models, including
situations in which pointwise convexity of the Hamiltonian fails.

Table~\ref{tab:regularity-regimes} records the three frameworks used below.
The first two are the global regimes of Theorem~\ref{thm:relsmooth}; the
third is the invariant-orbit formulation needed for entropy-regularised LQ
models.
\begin{table}[htbp]
\centering
\small
\begin{tabular}{>{\raggedright\arraybackslash}p{0.15\textwidth}
>{\raggedright\arraybackslash}p{0.28\textwidth}
>{\raggedright\arraybackslash}p{0.24\textwidth}
>{\raggedright\arraybackslash}p{0.21\textwidth}}
\hline
\textbf{Regime} & \textbf{Dynamics and costs} & \textbf{Control domain} & \textbf{Smoothness result} \\
\hline
Nonlinear bounded & General nonlinear dynamics; bounded first-order cost derivatives
& General convex $\mathscr C$ satisfying the Bregman increment assumptions
& Global, Theorem~\ref{thm:relsmooth}(i) \\
Affine growth & Dynamics affine in the state and its mean; linearly growing cost derivatives
& Convex $\mathscr C$ with the required moment and finite-increment controls
& Global, Theorem~\ref{thm:relsmooth}(ii) \\
Affine-LQ Gaussian & Affine dynamics and quadratic costs
& Invariant class of nondegenerate Gaussian iterates
& Orbit-wise, Corollary~\ref{cor:gaussian-lq-smd} \\
\hline
\end{tabular}
\caption{The three relative-smoothness frameworks.}
\label{tab:regularity-regimes}
\end{table}

\begin{Assumption}[Well-posedness of iterates]\label{ass:iterates}
For the step parameter $\lambda>0$ under consideration, there exists
$\pi^0\in\mathcal{A}_{\mathscr{C}}$ such that the iterates
$\{\pi^n\}_{n\in\N}$ generated by Algorithm~1 exist, belong to
$\mathcal{A}_{\mathscr{C}}$, and satisfy
$\mathcal{D}_h(\pi^{n+1}\mid\pi^n)<\infty$ for all $n\geq0$.
\end{Assumption}

In general, Assumption \ref{ass:iterates} needs to be verified depending on the specific choices of $h$. We refer the reader to Section~\ref{sec:examples-h} for more details on this point.

\begin{Assumption}[Regularity of the derivatives]\label{ass:deriv}
There exists $K\ge 0$ such that for all $\varphi\in\{b,f,\sigma\}$,
$t\in[0,T]$, $x,x'\in\R^d$, $\mu,\mu'\in\mathcal{P}_2(\R^d)$, $m,m'\in \mathscr{C}$ and
$\xi,\xi'\in\R^d$,
\[
  |(D_x\varphi_t)(x,\mu,m)-(D_x\varphi_t)(x',\mu',m')|^2
  \le K\bigl(|x-x'|^2 + W_2(\mu,\mu')^2+D_h(m\mid m')\bigr),
\]
\[
  |(\partial_\mu\varphi_t)(x,\mu,m)(\xi)-(\partial_\mu\varphi_t)(x',\mu',m')(\xi')|^2
  \le K\bigl(|x-x'|^2 + W_2(\mu,\mu')^2 + |\xi-\xi'|^2 + D_h(m\mid m')\bigr),
\]
and for $g$,
\[
  |(D_x g)(x,\mu)-(D_x g)(x',\mu')|^2\le K\bigl(|x-x'|^2+W_2(\mu,\mu')^2\bigr),
\]
\[
  |(\partial_\mu g)(x,\mu)(\xi)-(\partial_\mu g)(x',\mu')(\xi')|^2
  \le K\bigl(|x-x'|^2+W_2(\mu,\mu')^2+|\xi-\xi'|^2\bigr).
\]
\end{Assumption}

\begin{Assumption}[Regularity of control-measure derivatives]\label{ass:measderiv}
For each $\varphi\in\{b,\sigma,f\}$, the map
$m\mapsto\varphi_t(x,\mu,m)$ admits a flat derivative
$\frac{\delta\varphi}{\delta m}$ on $\mathscr C$. For every
$m,m'\in\mathscr C$, the map
\[
 s\longmapsto
 \int_A\frac{\delta\varphi_t}{\delta m}
 \bigl(x,\mu,m+s(m'-m),a\bigr)(m'-m)(\d a)
\]
is continuous on $[0,1]$.
Moreover, there exists $K\ge 0$ such that for all $t\in[0,T]$,
$x\in\R^d$, $\mu\in\mathcal{P}_2(\R^d)$ and $m,m',m''\in \mathscr{C}$,
\[
  \left|\int_A \frac{\delta\varphi_t}{\delta m}
  (x,\mu,m'',a)\,(m-m')(\d a)\right|^2
  \le K\,D_h(m\mid m').
\]
\end{Assumption}

The preceding assumption is sufficient for the first-order differentiation of
the state and cost functionals. The relative smoothness estimate additionally
requires stability of the directional flat derivatives with respect to the
state and its law, together with a finite-increment control on the first-order
Taylor remainders in the control variable.

For $\varphi\in\{b,\sigma,f\}$, define the control-measure remainder
\begin{align*}
 \mathcal R_t^\varphi(x,\mu;m'\mid m)
 :={}&\varphi_t(x,\mu,m')-\varphi_t(x,\mu,m)\\
 &-\int_A\frac{\delta\varphi_t}{\delta m}(x,\mu,m,a)
 (m'-m)(\d a).
\end{align*}

\begin{Assumption}[Finite-increment regularity for relative smoothness]
\label{ass:relative-smoothness-regularity}
For each $\varphi\in\{b,\sigma,f\}$, there exists $K\geq0$ such that, for all
$t\in[0,T]$, $x,x'\in\mathbb R^d$,
$\mu,\mu'\in\mathcal P_2(\mathbb R^d)$ and $m,m',\widehat m\in\mathscr C$,
\begin{align*}
&\left|\int_A
\left[
\frac{\delta\varphi_t}{\delta m}(x,\mu,\widehat m,a)
-\frac{\delta\varphi_t}{\delta m}(x',\mu',\widehat m,a)
\right](m'-m)(\d a)\right|^2
\\
&\hspace{35mm}\le
K\bigl(|x-x'|^2+W_2(\mu,\mu')^2\bigr)D_h(m'\mid m).
\end{align*}
Moreover,
\[
 |\mathcal R_t^b(x,\mu;m'\mid m)|
 +|\mathcal R_t^f(x,\mu;m'\mid m)|
 \le K D_h(m'\mid m).
\]
Finally, for every $(t,x,\mu)$, the map
$m\mapsto\sigma_t(x,\mu,m)$ is affine on $\mathscr C$, or equivalently\footnote{
This equivalence uses the convexity of $\mathscr C$ and the existence of the
flat derivative from Assumption~\ref{ass:measderiv}. Indeed, fixing
$(t,x,\mu)$ and writing $F(m)=\sigma_t(x,\mu,m)$, if
$\mathcal R^F(m'\mid m)=0$ and $m_s=(1-s)m+sm'$, then
$F(m_s)-F(m)=s(F(m')-F(m))$, so $F$ is affine. Conversely, differentiating
the affine identity along the segment from $m$ to $m'$ and using the
definition of the flat derivative gives $\mathcal R^F(m'\mid m)=0$. This
intrinsic equivalence should not be confused with the pointwise condition
$\delta^2F/\delta m^2\equiv0$ for an iteratively normalized representative.}
\linebreak
$\mathcal R_t^\sigma(x,\mu;m'\mid m)=0$ for all $m,m'\in\mathscr C$.
\end{Assumption}

\begin{Remark}
Assumption~\ref{ass:relative-smoothness-regularity} is formulated entirely in
terms of directional finite increments. In particular, it neither requires
the existence of second order flat derivatives nor imposes a pointwise condition on
a chosen representative of such derivatives. With respect to the
control-measure variable, this is strictly weaker than the regularity framework
of~\cite[Assumption~2.8]{KERIMKULOV2025104765}, which assumes the existence of
$\delta^2\varphi/\delta m^2$ for $\varphi\in\{b,\sigma,f\}$ and imposes
$\delta^2\sigma/\delta m^2=0$ pointwise. That assumption also requires
differentiability in $x$ of $\delta\varphi/\delta m$ and a directional bound
on $D_x\delta\varphi/\delta m$, whereas the first condition above is stated
directly in finite-difference form. The present formulation retains only the
increment and remainder estimates used in the relative smoothness argument,
which constitutes a substantial weakening of the regularity in the
control-measure variable. The additional directional stability with respect
to $\mu$ is specific to the MKV setting and has no counterpart in
the state-dependent framework of~\cite{KERIMKULOV2025104765}.

Whenever the second order flat derivatives from the former formulation exist, the
fundamental theorem of calculus gives, with $q=m'-m$ and
$m_s=m+s(m'-m)$,
\[
 \mathcal R_t^\varphi(x,\mu;m'\mid m)
 =\int_0^1(1-s)\int_A\int_A
 \frac{\delta^2\varphi_t}{\delta m^2}
 (x,\mu,m_s,a,a')q(\d a)q(\d a')\,\d s.
\]
Thus the former directional second-derivative bound implies the present
finite-remainder bound, whereas the converse does not require a second flat
derivative to exist.

The difference is genuine. 
%Let $A=\{0,1\}$, identify $m\in\mathcal P(A)$
%with $p=m(\{1\})$, and take $h(m)=p^2/2$. For $p_0\in(0,1)$, the functional
%\[
% \varphi(m)=\frac12(p-p_0)|p-p_0|
%\]
%has a Lipschitz first derivative and therefore satisfies
%$|\mathcal R^\varphi(m'\mid m)|\le D_h(m'\mid m)$, but it has no second flat
%derivative at $p=p_0$. 
As an example, if $A\subseteq\mathbb R^k$ and
\[
 \sigma_t(x,\mu,m)=\sigma_t^0(x,\mu)
   +C_t\int_A a\,m(\d a),
\]
then $\mathcal R_t^\sigma=0$. Under our normalization convention, however,
\[
 \frac{\delta\sigma_t}{\delta m}(x,\mu,m,a)
 =C_t\left(a-\int_A a'\,m(\d a')\right),
\]
whose iterated normalized flat derivative does not vanish pointwise. Thus the
affine-remainder condition is both intrinsic and compatible with the standard
control-dependent volatility appearing e.g. in relaxed LQ models.
\end{Remark}

\begin{Remark}

Moving from standard stochastic control to the MKV framework introduces the state law $\mu$ as a fundamentally new, dynamically evolving variable alongside the state $x$ and the relaxed control $m$. To manage this complexity, our assumptions deliberately decouple how we measure variations in these different spaces. We pair the state law $\mu$ with the Wasserstein metric $W_2$, and the control measure $m$ with the Bregman divergence $D_h$.

Structuring the bounds this way is essential for the proofs. When the algorithm updates a policy, it triggers two distinct effects: a direct change in the chosen actions (the control $m$) and an indirect effect that shifts the entire population distribution (the law $\mu$). By isolating these variables with their respective natural metrics, we can decouple the errors in our convergence analysis. Specifically, the $D_h$ terms seamlessly feed into the standard regret bounds of the Mirror Descent algorithm, while the $W_2$ terms allow us to use standard SDE stability arguments (such as Gr\"onwall's inequality) to bound the drifting of the population law.

\end{Remark}

Under the above assumptions, we establish a performance difference bound for any two admissible controls $\pi$ and $\pi'$, in which the cost difference is controlled by the first-order variation of the cost functional and a Bregman divergence. More precisely, the first-order variation of $J^\tau$ at $\pi$ in the direction $\pi'-\pi$ is given by
$$
\d J^\tau(\pi)(\pi'-\pi):=\E\Big[\int_0^T\int_A
\frac{\delta H^{\tau}_t}{\delta m}
\bigl(\Theta_t^\pi,\pi_t,a\bigr)
(\pi'_t-\pi_t)(\d a), \d t
\Big].
$$
The following result shows that the remainder is controlled by the Bregman divergence generated by $h$. This corresponds to the notion of $L$-smoothness of $J^\tau$ relative to the reference function $h$, introduced in the static setting in \cite{NEURIPS2022_6e3daaec} and subsequently considered in a dynamic setting in \cite{KERIMKULOV2025104765}.

\begin{Theorem}[Relative smoothness of $J^{\tau}$]\label{thm:relsmooth}
Suppose Assumptions~\ref{ass:H1},~\ref{ass:H2},~\ref{ass:deriv},
\ref{ass:measderiv} and~\ref{ass:relative-smoothness-regularity} hold. Assume,
in addition, that one of the following two alternative regimes is satisfied:
\begin{enumerate}
\item[(i)] Assumption~\ref{ass:H3} holds;
\item[(ii)] Assumptions~\ref{ass:H3gen} and~\ref{ass:affine-dynamics} hold,
and, for every $(t,x,\mu)$, the map $m\mapsto b_t(x,\mu,m)$ is affine on
$\mathscr C$; equivalently,
\[
\mathcal R_t^b(x,\mu;m'\mid m)=0,
\qquad m,m'\in\mathscr C.
\]
\end{enumerate}
Let $\tau\ge0$. Then there exists $L\ge \tau$ such that for all
$\pi,\pi'\in\mathcal{A}_{\mathscr{C}}$,
\[
  J^{\tau}(\pi')-J^{\tau}(\pi)
   \le \E\!\int_0^T\!\int_A\frac{\delta H^{\tau}_t}{\delta m}\bigl(\Theta_t^\pi,\pi_t,a\bigr)(\pi'_t-\pi_t)(\d a)\,\d t
   +L\, \mathcal{D}_h(\pi'\mid \pi) .
\]
\end{Theorem}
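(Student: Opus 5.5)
We first reduce to $\tau=0$. By convexity of $h$ and the definition of $D_h$,
\[
\tau\,\E\int_0^T\bigl(h(\pi'_t)-h(\pi_t)\bigr)\d t=\tau\,\E\int_0^T\int_A\frac{\delta h}{\delta m}(\pi_t,a)(\pi'_t-\pi_t)(\d a)\,\d t+\tau\,\mathcal D_h(\pi'\mid\pi).
\]
It therefore suffices to prove the bound for $J^0$ with a constant $L_0$, and then to take $L=L_0+\tau\ge\tau$. For $J^0$ we use the standard Pontryagin-type performance-difference identity. Write $\delta X=X^{\pi'}-X^\pi$ and $\mu_t=\mathcal L(X^\pi_t)$, $\mu'_t=\mathcal L(X^{\pi'}_t)$. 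We apply It\^o's formula to $\langle Y^\pi_t,\delta X_t\rangle$ on $[0,T]$, using the adjoint equation~\eqref{eq:adjoint-mkv-full}. The terminal term is handled by the mean-value expansion of $g$ in $(x,\mu)$ along $X^\pi_T+\theta\delta X_T$ and its law. We use Fubini with the independent copy to turn the $\partial_\mu$-terms into the cross-expectation form appearing in $Y_T$. This yields
\[
J^0(\pi')-J^0(\pi)=\E\int_0^T\bigl[H^0_t(X^{\pi'}_t,\mu'_t,Y^\pi_t,Z^\pi_t,\pi'_t)-H^0_t(\Theta^\pi_t,\pi_t)-\langle \partial_{(x,\mu)}H^0_t(\Theta^\pi_t,\pi_t),(\delta X_t,\delta\mu_t)\rangle\bigr]\d t+\mathcal E_g .
\]
Here $\mathcal E_g$ is the second-order remainder of $g$. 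The bracket is shorthand for the $D_x$ term plus the independent-copy $\partial_\mu$ term. Stochastic integrals are true martingales, by Proposition~\ref{prop:wellposed} in either regime.

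Next we split the Hamiltonian increment into three parts. The first is the control increment at the frozen point $\Theta^\pi_t$, namely $\int_A\frac{\delta H^0_t}{\delta m}(\Theta^\pi_t,\pi_t,a)(\pi'_t-\pi_t)(\d a)+\mathcal R^b_t\cdot Y^\pi_t+\mathcal R^f_t$; the $\sigma$-remainder vanishes by assumption. The second is the state/law Taylor remainder at fixed control $\pi'_t$. The third is the cross term: the difference between the $(x,\mu)$-derivatives of $H^0$ evaluated with $\pi'_t$ and with $\pi_t$, integrated against $(\delta X,\delta\mu)$. The linear part is exactly the claimed first-order term. The remaining terms are bounded as follows.

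The state/law remainder and $\mathcal E_g$ are controlled by Assumption~\ref{ass:deriv}, which gives Lipschitz derivatives. This gives $|Y|\,(|\delta X|^2+W_2^2)$-type bounds for the $b$ and $\sigma$ parts and $|\delta X|^2+W_2^2$ for $f$ and $g$. The cross term is rewritten via the mean value theorem in $m$ (continuity from Assumption~\ref{ass:measderiv}) as a flat-derivative increment in $(x,\mu)$. Assumption~\ref{ass:relative-smoothness-regularity} then bounds it by $C(|\delta X|+W_2)\,D_h(\pi'_t\mid\pi_t)^{1/2}\,(1+|Y|+|Z|)$; Young's inequality turns this into squared terms. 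Finally, the forward stability estimate follows from Assumption~\ref{ass:H1} and Gr\"onwall, using $W_2(\mu_t,\mu'_t)^2\le\E|\delta X_t|^2$:
\[
\E\sup_t|\delta X_t|^2\le C\,\mathcal D_h(\pi'\mid\pi).
\]
This closes the estimate in regime~(i). There, Assumption~\ref{ass:H3} gives $Y^\pi$ bounded in $H^\infty$ and $Z^\pi$ in BMO, uniformly in $\pi$. The weights $|Y|$ and $|Z|$ multiplying $|\delta X|^2$ and $\mathcal R^b$ are handled by $|\mathcal R^b|\le K D_h$ and the $H^\infty$ bound. The $Z$-weighted terms are handled by the energy inequality for BMO martingales combined with a conditional version of the forward estimate, $\E[\sup_{s\ge t}|\delta X_s|^2\mid\mathcal F_t]$. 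This is the route of~\cite{KERIMKULOV2025104765}, extended with the $W_2$ term, which is deterministic and hence harmless.

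In regime~(ii), $Y$ is only in $S^2$, so $Y$-weighted quadratic terms cannot be absorbed this way. Affinity is what rescues the argument. By Assumption~\ref{ass:affine-dynamics}, the $(x,\mu)$-derivatives of $b$ and $\sigma$ are constant, so their state/law remainders vanish identically and their cross terms vanish as well. By the affinity of $b$ and $\sigma$ in $m$, $\mathcal R^b=\mathcal R^\sigma=0$. Hence $Y$ and $Z$ disappear from all remainders. What is left involves only $f$ and $g$, which have Lipschitz derivatives (Assumption~\ref{ass:deriv}), the bound $|\mathcal R^f|\le K D_h$, and the $f$ cross term bounded by $C(|\delta X|+W_2)D_h^{1/2}$. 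All of these are controlled by the forward estimate alone.

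The main obstacle is regime~(i): bounding the $Z$-weighted cross and remainder terms uniformly through BMO and conditional Gr\"onwall estimates. The difficulty is that the law perturbation $\delta\mu$ couples into the conditional estimates of the forward state.
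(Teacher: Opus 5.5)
Your proof is correct, but it follows a different route from the paper. The paper does not expand the performance difference directly. It writes $J^0(\pi')-J^0(\pi)=\int_0^1\d J^0(\pi^\varepsilon)(\pi'-\pi)\,\d\varepsilon$ along $\pi^\varepsilon=(1-\varepsilon)\pi+\varepsilon\pi'$, which needs the G\^ateaux formula of Lemma~\ref{lem:gateaux} (linearised MKV SDE plus duality). It then compares $\frac{\delta H^0_t}{\delta m}(\Theta^{\pi^\varepsilon}_t,\pi^\varepsilon_t,\cdot)$ with $\frac{\delta H^0_t}{\delta m}(\Theta^{\pi}_t,\pi_t,\cdot)$ in Lemma~\ref{lem:reg}. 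The key input there is the Lipschitz stability of the adjoint pair in the control (Lemma~\ref{lem:YZlipc}), the heaviest estimate of the section.

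You instead bound from above the exact remainder $\mathfrak R_g(X_T^{\pi'}\mid X_T^\pi)+\int_0^T\mathfrak R^\pi_{H,t}(\pi'\mid\pi)\,\d t$ of the performance-difference identity. The paper uses that identity only for the lower bound, in the proof of Theorem~\ref{thm:relconvex}. Since only the adjoint pair at the base point appears, you need neither differentiability of $J^0$ nor any BSDE stability. In regime (ii), forward stability and the Lipschitz derivatives of $f,g$ suffice, because the $b,\sigma$ remainders vanish identically. In regime (i), you additionally need only the uniform $H^\infty$/BMO bounds of Lemma~\ref{lem:Ybound}. Your route also shows that Theorems~\ref{thm:relsmooth} and~\ref{thm:relconvex} are upper and lower bounds on the same quantity. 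The paper's route, in exchange, yields the gradient formula and the Bregman--Lipschitz regularity of $\pi\mapsto\frac{\delta H^0}{\delta m}(\Theta^\pi,\pi,\cdot)$ along segments as by-products.

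Two points in your sketch need correcting.

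\emph{The cross term.} Your cross term is a difference of $(x,\mu)$-derivatives taken at $\pi'_t$ and at $\pi_t$. Rewriting it as a flat-derivative increment in $(x,\mu)$ by a mean value argument in $m$ would require commuting $D_x$ or $\partial_\mu$ with $\delta/\delta m$, which is not assumed. It is also unnecessary: Assumption~\ref{ass:deriv} with $x=x'$, $\mu=\mu'$ gives $|D_x\varphi_t(X_t^\pi,\mu_t^\pi,\pi'_t)-D_x\varphi_t(X_t^\pi,\mu_t^\pi,\pi_t)|^2\le K D_h(\pi'_t\mid\pi_t)$, and likewise for $\partial_\mu\varphi_t$. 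That is exactly your bound, and with it your route does not use the first condition of Assumption~\ref{ass:relative-smoothness-regularity}. If you want to use that condition instead, take the Taylor remainder at the fixed control $\pi_t$. The rest is then a mixed difference in $(x,\mu)$ and $m$, which the fundamental theorem of calculus in $m$ turns into precisely the increment that condition controls.

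\emph{The $Z$-weighted terms in regime (i).} These are not a real obstacle and need no conditional forward estimate. By Lemma~\ref{lem:bmoL1L2},
$\E\int_0^T|Z^\pi_t|\,|\delta X_t|^2\,\d t\le\sqrt2\,\|Z^\pi\cdot W\|_{\mathrm{BMO}}\,\E\bigl[\sup_t|\delta X_t|\bigl(\int_0^T|\delta X_t|^2\,\d t\bigr)^{1/2}\bigr]\le C\,\E\sup_t|\delta X_t|^2$.
Similarly,
$\E\int_0^T|Z^\pi_t|\,|\delta X_t|\,D_h(\pi'_t\mid\pi_t)^{1/2}\,\d t\le C\bigl(\E\sup_t|\delta X_t|^2\bigr)^{1/2}\mathcal D_h(\pi'\mid\pi)^{1/2}$.
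Terms involving $W_2(\mu_t^\pi,\mu_t^{\pi'})$ or $\E|\delta X_t|$ are deterministic, so $\|Z^\pi\|_{H^2}$ absorbs them. The law perturbation therefore causes no coupling difficulty.
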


The proof of Theorem~\ref{thm:relsmooth} is given in
Section~\ref{sec:proof-relsmooth}. The constant $L$ depends on the
regularisation parameter $\tau$, the time horizon $T$, the initial condition,
the dimension $d'$ of the Brownian motion, and the constants appearing in the
assumptions of the applicable regime, but not on $\pi$ or $\pi'$.
The relative smoothness estimate established above provides the key descent property for the mirror descent scheme. In particular, when the step-size parameter is chosen sufficiently large, the cost functional is non-increasing along the sequence of iterates generated by Algorithm 1. More precisely, we have the following energy dissipation result.

\begin{Theorem}[Energy dissipation]\label{thm:dissipation}
Suppose the assumptions of Theorem~\ref{thm:relsmooth} and
Assumption~\ref{ass:iterates} hold, and let $\tau\geq0$. Let
$\lambda\geq L$, where $L$ is the constant from Theorem~\ref{thm:relsmooth},
and let $(\pi^n)_{n\in\N}$ be the sequence generated by Algorithm 1. Then,
for every integer $n$,
$$
J^\tau(\pi^{n+1})-J^\tau(\pi^n)
\leq
-(\lambda-L) 
\mathcal{D}_h(\pi^{n+1} \mid \pi^n).
$$
In particular, the sequence $(J^\tau(\pi^n))_{n\in\N}$ is non-increasing.
If, in addition,
$$
\inf_{\pi\in\mathcal A_{\mathscr C}}J^\tau(\pi)>-\infty,
$$
then there exists $J^\tau_\infty\in\mathbb{R}$ such that
$$
J^\tau(\pi^n)\longrightarrow J^\tau_\infty.
$$
Moreover, if $\lambda>L$, then
$$
\sum_{n=0}^{\infty}
\mathcal{D}_h(\pi^{n+1} \mid \pi^n)
<\infty.
$$

\end{Theorem}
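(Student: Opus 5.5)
The plan is to combine the relative smoothness inequality of Theorem~\ref{thm:relsmooth}, applied with $\pi=\pi^n$ and $\pi'=\pi^{n+1}$, with the first-order optimality of the pointwise mirror step~\eqref{eq:MDupdate}. Theorem~\ref{thm:relsmooth} gives
\[
J^\tau(\pi^{n+1})-J^\tau(\pi^n)\le \E\int_0^T\int_A\frac{\delta H^\tau_t}{\delta m}(\Theta_t^{\pi^n},\pi^n_t,a)(\pi^{n+1}_t-\pi^n_t)(\d a)\,\d t+L\,\mathcal D_h(\pi^{n+1}\mid\pi^n).
\]
Assumption~\ref{ass:iterates} guarantees that both iterates are admissible and that the Bregman term is finite. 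It therefore suffices to show that the linear term is at most $-\lambda\,\mathcal D_h(\pi^{n+1}\mid\pi^n)$.

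For this I would use the three-point (Bregman proximal) inequality pointwise in $(\omega,t)$. Write $G_t(a)=\frac{\delta H^\tau_t}{\delta m}(\Theta_t^{\pi^n},\pi^n_t,a)$ and $\Phi(m)=\int_A G_t(m-\pi^n_t)(\d a)+\lambda D_h(m\mid\pi^n_t)$; then $\pi^{n+1}_t$ minimises $\Phi$ over $\mathscr C$. Since $\Phi(\pi^{n+1}_t)\le\Phi(\pi^n_t)=0$, we get pointwise
\[
\int_A G_t(\pi^{n+1}_t-\pi^n_t)(\d a)\le-\lambda D_h(\pi^{n+1}_t\mid\pi^n_t).
\]
This elementary comparison with the competitor $m=\pi^n_t$ is all that is needed. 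It avoids the full three-point identity and requires no differentiability at the minimiser.

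Integrating over $[0,T]$ and taking expectations gives the claimed bound $J^\tau(\pi^{n+1})-J^\tau(\pi^n)\le-(\lambda-L)\mathcal D_h(\pi^{n+1}\mid\pi^n)$. The main obstacle is justifying this integration. The pointwise inequality must be integrable, which requires the linear term to lie in $L^1(\d\mathbb P\otimes\d t)$. For this I would use Assumption~\ref{ass:measderiv}: the bound $|\int \frac{\delta\varphi}{\delta m}(m-m')|^2\le K D_h(m\mid m')$ for $\varphi\in\{b,\sigma,f\}$ is combined with $|Y|,|Z|$ through Cauchy--Schwarz and the $S^2\times H^2$ bounds of Proposition~\ref{prop:wellposed}, together with finiteness of $\mathcal D_h(\pi^{n+1}\mid\pi^n)$. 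The $\tau h$ part of $G_t$ is handled by the integrability of $h(\pi_t)$ built into $\mathcal A_{\mathscr C}$. I also need a measurable selection, so that the integrand is progressively measurable; this follows from the joint-measurability conventions on flat derivatives. Since $\lambda\ge L$ and $D_h\ge0$ by convexity of $h$, the sequence $J^\tau(\pi^n)$ is non-increasing.

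If $\inf J^\tau>-\infty$, the sequence $J^\tau(\pi^n)$ is monotone and bounded below, hence converges to some $J^\tau_\infty\in\R$. It is finite because $J^\tau(\pi^0)<\infty$ for admissible $\pi^0$. Finally, when $\lambda>L$, summing the dissipation inequality from $n=0$ to $N-1$ telescopes to
\[
(\lambda-L)\sum_{n=0}^{N-1}\mathcal D_h(\pi^{n+1}\mid\pi^n)\le J^\tau(\pi^0)-J^\tau(\pi^N)\le J^\tau(\pi^0)-\inf J^\tau.
\]
Letting $N\to\infty$ gives the summability.
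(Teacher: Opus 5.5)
Your proposal is correct and follows the paper's proof essentially verbatim. You apply relative smoothness with $(\pi,\pi')=(\pi^n,\pi^{n+1})$, compare the mirror-step objective at $\pi^{n+1}_t$ with the competitor $m=\pi^n_t$ to get $\int_A G_t\,(\pi^{n+1}_t-\pi^n_t)(\d a)\le-\lambda D_h(\pi^{n+1}_t\mid\pi^n_t)$, then integrate and telescope, and your additional integrability remarks only make explicit what the paper leaves implicit.
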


The proof of Theorem~\ref{thm:dissipation} is given in Section~\ref{sec:proof-dissipation}. The result has two noteworthy features. First, the descent property does not require convexity of the control problem; in particular, the cost functional $J^\tau$ may be non-convex. Secondly, the regularisation parameter $\tau$ may be equal to zero. Thus, the energy dissipation estimate applies to both the regularised and unregularised problems.

On the other hand, monotonicity of the cost along the iterates does not, by
itself, imply convergence to an optimal control. To obtain a quantitative
convergence result, we impose below a relative semiconvexity condition: the
negative part of the first-order remainder of the unregularised objective must
be controlled by the Bregman divergence. 
%Once the regularisation compensates
%for this negative curvature, the Pontryagin first-order condition becomes
%sufficient for global optimality.

For square-integrable random variables $X,X'$, set
$\mu=\mathcal L(X)$ and $\mu'=\mathcal L(X')$, and let
$(\widetilde X,\widetilde X')$ be an independent copy of $(X,X')$. Define the
averaged first-order remainder of the terminal cost by
\begin{align}
\mathfrak R_g(X'\mid X)
:={}&\E\Big[
g(X',\mu')-g(X,\mu)
-\big\langle (D_xg)(X,\mu),X'-X\big\rangle
\nonumber\\
&\hspace{15mm}
-\widetilde\E\big[
\big\langle
(\partial_\mu g)(X,\mu)(\widetilde X),
\widetilde X'-\widetilde X
\big\rangle
\big]
\Big].
\label{eq:terminal-remainder}
\end{align}
For $\pi,\pi'\in\mathcal A_{\mathscr C}$, write
$\mu_t^\pi=\mathcal L(X_t^\pi)$ and let
$(\widetilde X^\pi,\widetilde X^{\pi'})$ be an independent copy of
$(X^\pi,X^{\pi'})$. We define the averaged Hamiltonian remainder along the
forward--backward system associated with $\pi$ by
\begin{align}
&\mathfrak R_{H,t}^\pi(\pi'\mid\pi)
\nonumber\\
:={}&\E\Big[
H_t^0(X_t^{\pi'},\mu_t^{\pi'},Y_t^\pi,Z_t^\pi,\pi_t')
-H_t^0(X_t^\pi,\mu_t^\pi,Y_t^\pi,Z_t^\pi,\pi_t)
\nonumber\\
&\quad
-\big\langle
(D_xH_t^0)(\Theta_t^\pi,\pi_t),
X_t^{\pi'}-X_t^\pi
\big\rangle
\nonumber\\
&\quad
-\widetilde\E\Big[
\big\langle
(\partial_\mu H_t^0)
(X_t^\pi,\mu_t^\pi,Y_t^\pi,Z_t^\pi,\pi_t)
(\widetilde X_t^\pi),
\widetilde X_t^{\pi'}-\widetilde X_t^\pi
\big\rangle
\Big]
\nonumber\\
&\quad
-\int_A
\frac{\delta H_t^0}{\delta m}
(\Theta_t^\pi,\pi_t,a)
(\pi_t'-\pi_t)(\d a)
\Big].
\label{eq:Hamiltonian-remainder}
\end{align}

The next condition is designed to remove a structural obstruction in the
classical stochastic-control analysis of~\cite{KERIMKULOV2025104765}.
Pointwise convexity of the Hamiltonian, when required for every adjoint pair
$(y,z)$, suppresses the first-order remainders of the drift and diffusion and
therefore essentially forces affine dynamics along admissible directions.
In the MKV setting this is unnecessarily restrictive: curvature generated by
the dynamics may be compensated, after expectation and time integration, by
the running and terminal costs. We therefore impose a lower bound only on
the aggregate remainder evaluated along admissible forward--backward
trajectories. This trajectory-wise formulation is the key new convexity
mechanism of the paper; its verifiable affine and LQ reductions are developed
below.

\begin{Assumption}[Relative semiconvexity along admissible trajectories]
\label{ass:relative-semiconvexity}
There exists a constant $\rho\geq0$ such that, for every
$\pi,\pi'\in\mathcal A_{\mathscr C}$ with
$\mathcal D_h(\pi'\mid\pi)<\infty$,
\begin{equation}\label{eq:trajectory-semiconvexity}
\mathfrak R_g(X_T^{\pi'}\mid X_T^\pi)
+\int_0^T\mathfrak R_{H,t}^\pi(\pi'\mid\pi)\,\d t
\geq-\rho\mathcal D_h(\pi'\mid\pi).
\end{equation}
\end{Assumption}

The constant $\rho$ measures the negative curvature of the unregularised
functional relative to the geometry generated by $h$. The performance
difference identity established in the proof of Theorem~\ref{thm:relconvex}
shows that Assumption~\ref{ass:relative-semiconvexity} is equivalent to
\[
J^0(\pi')-J^0(\pi)
\geq \d J^0(\pi)(\pi'-\pi)
-\rho\mathcal D_h(\pi'\mid\pi).
\]
The usefulness of the assumption therefore rests on tractable structural
criteria for the aggregate remainder. This is developed below for affine
dynamics and quadratic costs: Proposition~\ref{prop:lq-semiconvexity}
reduces the condition to an explicit mean--fluctuation quadratic form,
Corollary~\ref{cor:lq-positivity} recovers a broad nonnegative-curvature LQ
regime, and Example~\ref{ex:procyclical-systemic-risk} exhibits a financially
motivated case with a computable constant $\rho>0$.

The standard pointwise convexity condition on $g$ and $H^0_t(., ., y, z,.)$ (see e.g. \cite{KERIMKULOV2025104765}) is a sufficient, but substantially stronger, criterion for
Assumption~\ref{ass:relative-semiconvexity} with $\rho=0$.

\begin{Proposition}[Pointwise convexity criterion]
\label{prop:pointwise-convexity}
Suppose that $g$ is convex in the sense that, for all $x,x'$, $\mu,\mu'$ and
all couplings $(\xi,\xi')$ with respective marginals $\mu,\mu'$,
\begin{align*}
g(x',\mu')-g(x,\mu)
&\geq
\langle(D_xg)(x,\mu),x'-x\rangle
+\widetilde\E\big[
\langle(\partial_\mu g)(x,\mu)(\xi),\xi'-\xi\rangle
\big].
\end{align*}
Suppose also that, for every $(t,y,z)$, the map
$(x,\mu,m)\mapsto H_t^0(x,\mu,y,z,m)$ satisfies the analogous first-order
convexity inequality in $(x,\mu,m)$. Then
Assumption~\ref{ass:relative-semiconvexity} holds with $\rho=0$.
\end{Proposition}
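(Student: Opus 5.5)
The plan is to show that each of the two remainders in \eqref{eq:trajectory-semiconvexity} is nonnegative. Then their sum is bounded below by $0\geq-\rho\mathcal D_h(\pi'\mid\pi)$ with $\rho=0$. Fix $\pi,\pi'\in\mathcal A_{\mathscr C}$ with $\mathcal D_h(\pi'\mid\pi)<\infty$. On the probability space carrying the independent copy, consider the pair $(X_t^\pi,X_t^{\pi'})$ together with its copy $(\widetilde X_t^\pi,\widetilde X_t^{\pi'})$. The key observation is that the random vector $(\widetilde X_t^\pi,\widetilde X_t^{\pi'})$ is a coupling of $(\mu_t^\pi,\mu_t^{\pi'})$. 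Moreover, being independent of $\omega$, it can serve as the coupling $(\xi,\xi')$ in the convexity hypothesis for every frozen value of the non-tilde variables.

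\emph{Terminal term.} Apply the convexity inequality for $g$ pointwise in $\omega$ with $x=X_T^\pi(\omega)$, $x'=X_T^{\pi'}(\omega)$, $\mu=\mu_T^\pi$, $\mu'=\mu_T^{\pi'}$ and $(\xi,\xi')=(\widetilde X_T^\pi,\widetilde X_T^{\pi'})$. This shows that the integrand in \eqref{eq:terminal-remainder} is nonnegative almost surely. Taking expectations, which is justified because all terms are integrable by Proposition~\ref{prop:wellposed} together with Assumption~\ref{ass:H3} or the linear growth bounds of Assumption~\ref{ass:H3gen}, gives $\mathfrak R_g(X_T^{\pi'}\mid X_T^\pi)\ge0$.

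\emph{Hamiltonian term.} For a.e.\ $t$ and each $\omega$, freeze $(y,z)=(Y_t^\pi(\omega),Z_t^\pi(\omega))$. Then apply the first-order convexity inequality of $(x,\mu,m)\mapsto H_t^0(x,\mu,y,z,m)$ between $(X_t^\pi,\mu_t^\pi,\pi_t)$ and $(X_t^{\pi'},\mu_t^{\pi'},\pi'_t)$, again using the coupling $(\widetilde X_t^\pi,\widetilde X_t^{\pi'})$ for the measure component. The $m$-direction first-order term is $\int_A\frac{\delta H_t^0}{\delta m}(\cdot,\pi_t,a)(\pi'_t-\pi_t)(\d a)$. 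This matches exactly the last line of \eqref{eq:Hamiltonian-remainder}, because the analogous inequality is understood with the flat derivative in $m$. Hence the integrand of $\mathfrak R^\pi_{H,t}$ is a.s.\ nonnegative. Taking expectations and integrating in $t$ gives $\int_0^T\mathfrak R^\pi_{H,t}\,\d t\ge0$.

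The main obstacle is measure-theoretic rather than analytic. First, the hypothesis is stated for deterministic $(y,z)$ and deterministic couplings, while we need it with $(y,z)$ random and with the $\widetilde E$-expectation taken conditionally on $\omega$. This is handled by Fubini on the product space $\Omega\times\widetilde\Omega$ and the independence of the copy: for fixed $\omega$, the law of $(\widetilde X,\widetilde X')$ under $\widetilde{\mathbb P}$ is the required coupling. Second, every term must be integrable so that expectations can be split. For this I would use the $S^2\times H^2$ bounds on $(X,Y,Z)$, the Lipschitz/growth Assumption~\ref{ass:H1}, and Assumption~\ref{ass:measderiv}, which bounds the flat-derivative term by $\sqrt{K D_h(\pi_t'\mid\pi_t)}$. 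That bound is integrable since $\mathcal D_h(\pi'\mid\pi)<\infty$. The progressive measurability conventions stated earlier guarantee that the composed integrands are measurable.
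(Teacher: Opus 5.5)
Your proposal is correct and follows essentially the same route as the paper. The paper applies the convexity of $g$ at $(X_T^\pi,X_T^{\pi'})$ and the Hamiltonian convexity at each $t$ with $(y,z)=(Y_t^\pi,Z_t^\pi)$ and base point $(X_t^\pi,\mu_t^\pi,\pi_t)$, then takes expectations and integrates in time, exactly as you do, with your independent-copy coupling and Fubini remarks making explicit what the paper leaves implicit. Strictly, term-by-term integrability is not needed, since each remainder is defined as the expectation of a single bracket that you have shown to be nonnegative almost surely.
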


\begin{proof}
Apply the terminal convexity inequality to
$(X_T^\pi,X_T^{\pi'})$. At each time $t$, apply the Hamiltonian convexity
inequality with $(y,z)=(Y_t^\pi,Z_t^\pi)$ and with base point
$(X_t^\pi,\mu_t^\pi,\pi_t)$. Taking expectations and integrating in time
gives
\[
\mathfrak R_g(X_T^{\pi'}\mid X_T^\pi)
+\int_0^T\mathfrak R_{H,t}^\pi(\pi'\mid\pi)\,\d t\geq0.
\]
\end{proof}

\begin{Remark}
The pointwise convexity criterion of
Proposition~\ref{prop:pointwise-convexity} is especially restrictive because
it is imposed for every $y$ and $z$. To make this explicit, fix $t$, two
triples $(x,\mu,m)$ and $(x',\mu',m')$, and a coupling $(\xi,\xi')$ of
$(\mu,\mu')$. For $\varphi\in\{b,\sigma,f\}$, denote by
\begin{align*}
\mathcal R_t^\varphi
:={}&\varphi_t(x',\mu',m')-\varphi_t(x,\mu,m)
-(D_x\varphi_t)(x,\mu,m)(x'-x)
\\
&-\widetilde\E\left[
(\partial_\mu\varphi_t)(x,\mu,m)(\widetilde\xi)
(\widetilde\xi'-\widetilde\xi)
\right]
-\int_A\frac{\delta\varphi_t}{\delta m}(x,\mu,m,a)
(m'-m)(\d a)
\end{align*}
the full first-order remainder of $\varphi_t$ between these two triples. For
vector- or matrix-valued coefficients, this definition is understood
componentwise, and all products denote the natural contraction of a derivative
with its increment. Since the Hamiltonian is affine in $(y,z)$, its
corresponding remainder is
\[
\mathcal R_t^f
+\langle y,\mathcal R_t^b\rangle
+\langle z,\mathcal R_t^\sigma\rangle_F,
\]
where $\langle\cdot,\cdot\rangle_F$ denotes the Frobenius inner product. The
pointwise convexity assumption on the Hamiltonian requires this expression to be nonnegative for
every $y$ and $z$. Here $\mathcal R_t^f$ is fixed, finite and independent of
$(y,z)$. Taking $z=0$ and
$y=-\lambda\mathcal R_t^b$, with $\lambda>0$, gives
\[
\mathcal R_t^f-\lambda|\mathcal R_t^b|^2\geq0.
\]
Letting $\lambda\to\infty$ shows that $\mathcal R_t^b=0$. Similarly, taking
$y=0$ and $z=-\lambda\mathcal R_t^\sigma$ yields
\[
\mathcal R_t^f-\lambda|\mathcal R_t^\sigma|_F^2\geq0,
\]
and hence $\mathcal R_t^\sigma=0$. Taking $y=z=0$ also shows that
$\mathcal R_t^f\geq0$.

Since the two triples and their coupling were arbitrary, pointwise
Hamiltonian convexity therefore forces the full first-order remainders of
$b_t$ and $\sigma_t$ to vanish in every admissible direction. On a convex
domain, integrating their first derivatives along line segments then shows
that these coefficients are affine along those directions. In contrast,
Assumption~\ref{ass:relative-semiconvexity} only controls the sum of the
remainders along the endogenous trajectories, with
$(y,z)=(Y_t^\pi,Z_t^\pi)$, after expectation and time integration. It thus
allows compensation between the dynamics, the running cost and the terminal
cost without requiring the remainders of $b_t$ and $\sigma_t$ to vanish
pointwise.
\end{Remark}

\begin{Theorem}[Relative semiconvexity and convexification]
\label{thm:relconvex}
Suppose Assumptions~\ref{ass:H1},~\ref{ass:H2},~\ref{ass:deriv},
\ref{ass:measderiv} and~\ref{ass:relative-semiconvexity} hold. Assume, in
addition, that either Assumption~\ref{ass:H3} or
Assumption~\ref{ass:H3gen} holds. Then, for every $\tau\geq0$ and all
$\pi,\pi'\in\mathcal A_{\mathscr C}$ with
$\mathcal D_h(\pi'\mid\pi)<\infty$,
\begin{align}
J^\tau(\pi')-J^\tau(\pi)
\geq{}&
\E\int_0^T\int_A
\frac{\delta H_t^\tau}{\delta m}
(\Theta_t^\pi,\pi_t,a)
(\pi_t'-\pi_t)(\d a)\,\d t
\nonumber\\
&+(\tau-\rho)\mathcal D_h(\pi'\mid\pi).
\label{eq:relative-semiconvexity-Jtau}
\end{align}
In particular, $J^\tau$ is relatively convex when $\tau\geq\rho$, and
strongly relatively convex, with modulus $\tau-\rho$, when
$\tau>\rho$.
\end{Theorem}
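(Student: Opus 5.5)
The plan is to first establish an exact performance-difference identity for the unregularised cost, and then add the regulariser separately. Fix $\pi,\pi'$ with $\mathcal D_h(\pi'\mid\pi)<\infty$. The identity to prove is
\begin{equation*}
J^0(\pi')-J^0(\pi)
=\d J^0(\pi)(\pi'-\pi)
+\mathfrak R_g(X_T^{\pi'}\mid X_T^\pi)
+\int_0^T\mathfrak R_{H,t}^\pi(\pi'\mid\pi)\,\d t .
\end{equation*}
To obtain it, I would write $\Delta X_t:=X_t^{\pi'}-X_t^\pi$, which has $\Delta X_0=0$. The terminal cost difference is split as
\[
\E\bigl[g(X_T^{\pi'},\mu_T^{\pi'})-g(X_T^\pi,\mu_T^\pi)\bigr]
=\E\langle Y_T^\pi,\Delta X_T\rangle+\mathfrak R_g(X_T^{\pi'}\mid X_T^\pi).
\]
This uses the terminal condition in \eqref{eq:adjoint-mkv-full} together with Fubini on the independent copy. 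Fubini lets me swap the roles of $(X,\widetilde X)$ in the $\partial_\mu g$ term, so that $\widetilde\E[\partial_\mu g(\widetilde X_T,\mu)(X_T)]$ paired with $\Delta X_T$ equals the term appearing in \eqref{eq:terminal-remainder}.

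Next I would apply It\^o's product rule to $\langle Y_t^\pi,\Delta X_t\rangle$. The increment $\Delta X$ has drift $b_t(X^{\pi'},\mu^{\pi'},\pi')-b_t(X^\pi,\mu^\pi,\pi)$ and diffusion given by the corresponding $\sigma$ difference. The quadratic covariation contributes $\operatorname{tr}\bigl((\Delta\sigma)^{\trans}Z^\pi\bigr)$. The product rule then gives
\[
\E\langle Y_T^\pi,\Delta X_T\rangle
=\E\int_0^T\Bigl(\langle Y_t,\Delta b_t\rangle+\operatorname{tr}(\Delta\sigma_t^{\trans}Z_t)
-\langle D_xH_t^0,\Delta X_t\rangle
-\widetilde\E\bigl[\partial_\mu H_t^0(\widetilde\Theta_t,\widetilde\pi_t)(X_t)\bigr]\cdot\Delta X_t\Bigr)\d t .
\]
Adding the running-cost difference $\E\int_0^T(f(\cdot')-f(\cdot))\,\d t$, the $b,\sigma,f$ differences combine into the Hamiltonian difference $H^0_t(X^{\pi'},\mu^{\pi'},Y,Z,\pi')-H^0_t(\Theta_t,\pi_t)$. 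The same Fubini swap turns the $\partial_\mu$ term into the form used in \eqref{eq:Hamiltonian-remainder}. Adding and subtracting the flat-derivative term $\int_A\frac{\delta H^0_t}{\delta m}(\pi'-\pi)(\d a)$ then yields exactly $\mathfrak R_{H,t}^\pi$. The leftover piece is $\d J^0(\pi)(\pi'-\pi)$, which by Lemma~\ref{lem:gateaux} is the stated first variation.

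The main obstacle is justifying these manipulations: the local martingale terms must be true martingales, and all integrands must be integrable. In regime (Assumption~\ref{ass:H3}), $Y,Z$ lie in $S^2\times H^2$ by Proposition~\ref{prop:wellposed}, and $\Delta X\in S^2$. BDG then shows that $\int\langle \Delta X,Z\,\d W\rangle$ and $\int\langle Y,\Delta\sigma\,\d W\rangle$ are uniformly integrable martingales, so a localisation argument with stopping times passes to the limit. In the linear-growth regime (Assumption~\ref{ass:H3gen}), I would use the fixed-$\pi$ bound \eqref{estimates:bsde:S2:H2:fixed:pi} together with the moment condition on controls, which suffices because only $\pi$'s adjoint appears. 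Finiteness of the flat-derivative pairings follows from Assumption~\ref{ass:measderiv}: its square is bounded by $K D_h(\pi'_t\mid\pi_t)$, which is integrable, and Cauchy--Schwarz with $Y,Z\in L^2$ handles the products. Assumption~\ref{ass:H1} gives square integrability of $\Delta b$ and $\Delta\sigma$.

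With the identity in hand, Assumption~\ref{ass:relative-semiconvexity} gives
\[
J^0(\pi')-J^0(\pi)\ge \d J^0(\pi)(\pi'-\pi)-\rho\mathcal D_h(\pi'\mid\pi).
\]
For the regulariser, the definition of $D_h$ gives pointwise
\[
h(\pi'_t)-h(\pi_t)=\int_A\frac{\delta h}{\delta m}(\pi_t,a)(\pi'_t-\pi_t)(\d a)+D_h(\pi'_t\mid\pi_t).
\]
Multiplying by $\tau$, integrating, and using $\E\int|h(\pi_t)|\,\d t<\infty$ for admissible controls, I would add this to the previous inequality. Since $\frac{\delta H^\tau}{\delta m}=\frac{\delta H^0}{\delta m}+\tau\frac{\delta h}{\delta m}$, this gives \eqref{eq:relative-semiconvexity-Jtau}. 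The convexity conclusions are then immediate: the $\mathcal D_h$ term has coefficient $\tau-\rho$, which is nonnegative when $\tau\ge\rho$ and strictly positive when $\tau>\rho$.
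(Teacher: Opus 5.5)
Your proposal is correct and follows the paper's proof essentially step for step: you split the terminal cost via the copy exchange, apply It\^o's formula to $(Y_t^\pi)^{\trans}\Delta X_t$ to get the exact performance-difference identity, invoke Assumption~\ref{ass:relative-semiconvexity}, and then add $\tau$ times the Bregman identity for $h$. Your BDG/$\mathcal H^1$ argument that the stochastic integrals are true martingales (using $\Delta X\in S^2$, $Y^\pi\in S^2$, $Z^\pi\in H^2$, and $\Delta\sigma\in H^2$ from Assumption~\ref{ass:H1} with $\mathcal D_h(\pi'\mid\pi)<\infty$) usefully spells out a point the paper only asserts.
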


The relative smoothness and semiconvexity estimates are complementary. Set
\[
\alpha:=\tau-\rho.
\]
When $\alpha=0$, the recursion gives an $\mathcal O(1/n)$ rate. When
$\alpha>0$, relative strong convexity yields a contraction with modulus
$\alpha$. The case $\rho=0$ recovers the unregularised sublinear rate at
$\tau=0$ and the previously stated regularised exponential rate.

\begin{Theorem}[Sublinear/Exponential convergence]\label{thm:main}
Suppose the assumptions of Theorem~\ref{thm:relsmooth} and
Assumptions~\ref{ass:relative-semiconvexity} and~\ref{ass:iterates} hold. Let
$\tau\geq\rho$, set
$\alpha=\tau-\rho$, let $\lambda\geq L$, and let
$(\pi^n)_{n\in\mathbb N}$ be generated by Algorithm~1.
\begin{enumerate}
\item \textbf{(Sublinear, $\alpha=0$).} For every
$\pi\in\mathcal A_{\mathscr C}$ such that
$\mathcal D_h(\pi\mid\pi^n)<\infty$ for all $n\geq0$,
\[
J^\tau(\pi^n)-J^\tau(\pi)
\leq\frac{\lambda}{n}\mathcal D_h(\pi\mid\pi^0),
\qquad n\geq1.
\]
\item \textbf{(Exponential, $\alpha>0$).} Suppose
$\pi^*\in\arg\min_{\pi\in\mathcal A_{\mathscr C}}J^\tau(\pi)$ and
$\mathcal D_h(\pi^*\mid\pi^n)<\infty$ for all $n\geq0$. Then
\[
\mathcal D_h(\pi^*\mid\pi^n)
\leq
\left(1-\frac{\alpha}{\lambda}\right)^n
\mathcal D_h(\pi^*\mid\pi^0),
\qquad n\geq0,
\]
and
\[
0\leq J^\tau(\pi^n)-J^\tau(\pi^*)
\leq
\lambda\left(1-\frac{\alpha}{\lambda}\right)^n
\mathcal D_h(\pi^*\mid\pi^0),
\qquad n\geq1.
\]
\end{enumerate}
\end{Theorem}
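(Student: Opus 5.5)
The argument is the standard mirror-descent analysis in the style of \cite{NEURIPS2022_6e3daaec,KERIMKULOV2025104765}. It combines three ingredients: a pointwise three-point inequality for the mirror step~\eqref{eq:MDupdate}, the relative smoothness bound of Theorem~\ref{thm:relsmooth} (upper bound), and the relative semiconvexity bound of Theorem~\ref{thm:relconvex} (lower bound, modulus $\alpha=\tau-\rho$).

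\textbf{Step 1: three-point inequality.} Fix $t$ and $\omega$, and write $G^n_t(a):=\frac{\delta H^\tau_t}{\delta m}(\Theta^{\pi^n}_t,\pi^n_t,a)$. Since $\pi^{n+1}_t$ minimises $m\mapsto\int G^n_t\,(m-\pi^n_t)+\lambda D_h(m\mid\pi^n_t)$ over the convex set $\mathscr C$, its first-order optimality condition reads, for any $m\in\mathscr C$ with $D_h(m\mid\pi^n_t)<\infty$,
\[
\int_A\Bigl(G^n_t+\lambda\Bigl(\frac{\delta h}{\delta m}(\pi^{n+1}_t,\cdot)-\frac{\delta h}{\delta m}(\pi^n_t,\cdot)\Bigr)\Bigr)(m-\pi^{n+1}_t)(\d a)\ge0 .
\]
This follows by perturbing along $\pi^{n+1}_t+\varepsilon(m-\pi^{n+1}_t)$ and using the flat derivative of $h$. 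Combined with the Bregman three-point identity
\[
D_h(m\mid\pi^n)-D_h(m\mid\pi^{n+1})-D_h(\pi^{n+1}\mid\pi^n)=\int\Bigl(\tfrac{\delta h}{\delta m}(\pi^{n+1})-\tfrac{\delta h}{\delta m}(\pi^n)\Bigr)(m-\pi^{n+1}),
\]
it yields
\[
\int G^n_t(\pi^{n+1}_t-\pi^n_t)+\lambda D_h(\pi^{n+1}_t\mid\pi^n_t)\le\int G^n_t(m-\pi^n_t)+\lambda D_h(m\mid\pi^n_t)-\lambda D_h(m\mid\pi^{n+1}_t).
\]
Taking $m=\pi_t$ and integrating in $\E\int_0^T\d t$ gives the integrated version. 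This step has two technical points. The first is measurability of $t\mapsto\pi^{n+1}_t$, which is supplied by Assumption~\ref{ass:iterates}. The second is justifying the variational inequality when $h$ is merely convex with a flat derivative (one-sided difference quotients, monotone convergence); finiteness is ensured by $\mathcal D_h(\pi\mid\pi^n)<\infty$ and $\mathcal D_h(\pi^{n+1}\mid\pi^n)<\infty$. I expect this justification, together with handling possibly infinite Bregman terms, to be the main delicate point. The rest is algebra.

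\textbf{Step 2: one-step recursion.} Theorem~\ref{thm:relsmooth} applied with $(\pi^n,\pi^{n+1})$, together with $\lambda\ge L$, gives
\[
J^\tau(\pi^{n+1})\le J^\tau(\pi^n)+\d J^\tau(\pi^n)(\pi^{n+1}-\pi^n)+\lambda\mathcal D_h(\pi^{n+1}\mid\pi^n).
\]
By Step 1, the right-hand side is at most
\[
J^\tau(\pi^n)+\d J^\tau(\pi^n)(\pi-\pi^n)+\lambda\mathcal D_h(\pi\mid\pi^n)-\lambda\mathcal D_h(\pi\mid\pi^{n+1}).
\]
Theorem~\ref{thm:relconvex} gives $J^\tau(\pi^n)+\d J^\tau(\pi^n)(\pi-\pi^n)\le J^\tau(\pi)-\alpha\mathcal D_h(\pi\mid\pi^n)$. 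Together these yield the key recursion
\[
J^\tau(\pi^{n+1})-J^\tau(\pi)\le(\lambda-\alpha)\mathcal D_h(\pi\mid\pi^n)-\lambda\mathcal D_h(\pi\mid\pi^{n+1}).
\]

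\textbf{Step 3: the two cases.} For $\alpha=0$, sum the recursion from $k=0$ to $n-1$. The sum telescopes to $\sum_{k=1}^{n}(J^\tau(\pi^k)-J^\tau(\pi))\le\lambda\mathcal D_h(\pi\mid\pi^0)$. Since $J^\tau(\pi^k)$ is non-increasing by Theorem~\ref{thm:dissipation}, the left-hand side is at least $n(J^\tau(\pi^n)-J^\tau(\pi))$, which gives the $\lambda/n$ bound. For $\alpha>0$, take $\pi=\pi^*$. Then $J^\tau(\pi^{n+1})-J^\tau(\pi^*)\ge0$, so $\lambda\mathcal D_h(\pi^*\mid\pi^{n+1})\le(\lambda-\alpha)\mathcal D_h(\pi^*\mid\pi^n)$, and iterating gives the geometric decay of $\mathcal D_h(\pi^*\mid\pi^n)$. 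Note $\alpha\le\lambda$: this holds because $\lambda\ge L\ge\tau\ge\alpha$. Finally, applying the recursion at step $n-1$ and dropping the negative term gives
\[
J^\tau(\pi^n)-J^\tau(\pi^*)\le(\lambda-\alpha)\mathcal D_h(\pi^*\mid\pi^{n-1})\le\lambda(1-\alpha/\lambda)^n\mathcal D_h(\pi^*\mid\pi^0),
\]
which is the stated value bound.
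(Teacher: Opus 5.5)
Your proposal is correct and follows essentially the same route as the paper. The paper also combines relative smoothness at $(\pi^n,\pi^{n+1})$, a pointwise three-point inequality for the mirror step (which you derive from the first-order condition and the Bregman three-point identity rather than citing Lemma~\ref{lem:3pts}), and relative convexity from Theorem~\ref{thm:relconvex} to obtain the same key recursion, then concludes by telescoping with monotonicity for $\alpha=0$ and by the contraction argument for $\alpha>0$. Your explicit check that $\alpha\le\tau\le L\le\lambda$, which ensures $1-\alpha/\lambda\in[0,1]$, is a small point the paper leaves implicit.
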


\begin{Remark}[Orbit-wise curvature is sufficient]
The global quantification in
Assumption~\ref{ass:relative-semiconvexity} is convenient for stating a
relative-convexity property of the objective, but it is stronger than what is
used in Theorem~\ref{thm:main}. The proof only applies the remainder lower
bound with the base point $\pi=\pi^n$. For the sublinear estimate, it is
therefore enough that this bound hold for every $n$ and for the comparator
$\pi$ appearing in the conclusion (or for every admissible comparator if the
estimate is to be asserted uniformly over that class). For the exponential
estimate, it is enough that it holds for the pairs $(\pi^n,\pi^\star)$ for all $n\geq0$.
Thus relative semiconvexity is only required along the SMD orbit and in the
directions of the relevant comparators, rather than uniformly for all pairs
in $\mathcal A_{\mathscr C}\times\mathcal A_{\mathscr C}$.
\end{Remark}

%\begin{Remark}
%Lemma~\ref{lem:reg} and the fundamental theorem of calculus yield, under the
%assumptions of Theorem~\ref{thm:relsmooth},
%\[
%\left|
%J^0(\pi')-J^0(\pi)-\d J^0(\pi)(\pi'-\pi)
%\right|
%\leq C\mathcal D_h(\pi'\mid\pi).
%\]
%Thus Assumption~\ref{ass:relative-semiconvexity} is automatically satisfied
%with the generic choice $\rho=C$ in the relative-smoothness framework. Its
%interest is that structural criteria may provide a smaller constant, and in
%particular $\rho=0$. This also shows that sufficiently strong regularisation,
%namely $\tau>C$, convexifies every problem covered by
%Theorem~\ref{thm:relsmooth}.
%\end{Remark}

\subsection{Affine dynamics and linear-quadratic models}\label{sec:affine-lq}

Affine and LQ models provide a natural benchmark for the SMD
algorithm. They retain the genuinely MKV features of the problem,
including interactions through the state law and randomized controls, while
remaining sufficiently tractable to yield explicit optimal policies. They
therefore make it possible to compare the SMD iterates with a known solution,
to distinguish discretisation and regression errors from optimisation error,
and to test the convergence rates of Theorem~\ref{thm:main}. At the same time,
the class is broad enough to include state-control cross terms and
control-dependent volatility, for which pointwise convexity of the
Hamiltonian need not be the appropriate criterion.
More importantly, this subsection makes the trajectory-wise curvature
assumption operational. We first isolate its form for general costs under
affine dynamics and then derive, for quadratic costs, an explicit decomposition
into mean and fluctuation components. The standard LQ setting and the
negative-curvature systemic-risk model are two distinct consequences of this
same reduction.

\paragraph{Affine dynamics.}
For square-integrable $X,X'$ and $\mathscr C$-valued random variables $m,m'$,
define
\begin{align*}
&\mathfrak R_{f,t}\big((X',m')\mid(X,m)\big)
\\
:={}&\E\Big[
f_t(X',\mu',m')-f_t(X,\mu,m)
-\langle(D_xf_t)(X,\mu,m),X'-X\rangle
\\
&\quad
-\widetilde\E\big[
\langle
(\partial_\mu f_t)(X,\mu,m)(\widetilde X),
\widetilde X'-\widetilde X
\rangle
\big]
\\
&\quad
-\int_A\frac{\delta f_t}{\delta m}(X,\mu,m,a)
(m'-m)(\d a)
\Big].
\end{align*}
Remark that if Assumption~\ref{ass:affine-dynamics} holds and, for every $(t,x,\mu)$,
the maps $m\mapsto b_t(x,\mu,m)$ and
$m\mapsto\sigma_t(x,\mu,m)$ are affine on $\mathscr C$, then
\[
\mathfrak R_{H,t}^\pi(\pi'\mid\pi)
=\mathfrak R_{f,t}\big(
(X_t^{\pi'},\pi_t')\mid(X_t^\pi,\pi_t)
\big).
\]
Consequently, Assumption~\ref{ass:relative-semiconvexity} reduces to
\begin{equation}\label{eq:affine-cost-semiconvexity}
\mathfrak R_g(X_T^{\pi'}\mid X_T^\pi)
+\int_0^T
\mathfrak R_{f,t}\big(
(X_t^{\pi'},\pi_t')\mid(X_t^\pi,\pi_t)
\big)\,\d t
\geq-\rho\mathcal D_h(\pi'\mid\pi).
\end{equation}

\paragraph{The LQ benchmark.}
We next make this condition explicit for quadratic costs. Besides providing a
transparent test of Assumption~\ref{ass:relative-semiconvexity}, this
specialisation is the basis of the numerical examples in
Section~\ref{sec:numerical-experiments}. Let
$A\subseteq\mathbb R^k$, and write
\[
\overline\mu=\int_{\mathbb R^d}v\,\mu(\d v),
\qquad
\overline m=\int_Aa\,m(\d a).
\]
Consider the coefficients
\begin{align*}
b_t(x,\mu,m)
&=B_tx+\overline B_t\overline\mu+C_t\overline m,\\
\sigma_t(x,\mu,m)
&=\gamma_t+D_tx+\overline D_t\overline\mu+F_t\overline m,
\end{align*}
and the costs
\begin{align*}
f_t(x,\mu,m)
={}&x^{\trans}Q_tx
+\overline\mu^{\trans}\overline Q_t\overline\mu
+\int_Aa^{\trans}N_ta\,m(\d a)
\\
&+2\overline m^{\trans}I_tx
+2\overline m^{\trans}\overline I_t\overline\mu
+2M_t^{\trans}x+2H_t^{\trans}\overline m,\\
g(x,\mu)
={}&x^{\trans}Px
+\overline\mu^{\trans}\overline P\,\overline\mu
+2L^{\trans}x.
\end{align*}

The time-dependent coefficients are bounded measurable maps with values in
the following spaces:
\begin{align*}
B_t,\overline B_t&\in\mathbb R^{d\times d},
&C_t&\in\mathbb R^{d\times k},
&\gamma_t&\in\mathbb R^{d\times d'},\\
D_t,\overline D_t&\in
\mathcal L(\mathbb R^d,\mathbb R^{d\times d'}),
&F_t&\in\mathcal L(\mathbb R^k,\mathbb R^{d\times d'}),\\
Q_t,\overline Q_t&\in\mathbb S^d,
&N_t&\in\mathbb S^k,
&I_t,\overline I_t&\in\mathbb R^{k\times d},\\
M_t&\in\mathbb R^d,
&H_t&\in\mathbb R^k.
\end{align*}
Recall that $d'$ is the dimension of the Brownian motion,
$\mathcal L(E,F)$ denotes the space of linear maps from $E$ to $F$, and
$\mathbb S^r$ denotes the space of symmetric $r\times r$ matrices. We equip
$\mathbb R^{d\times d'}$ with the Frobenius inner product and denote by
$F_t^*:\mathbb R^{d\times d'}\to\mathbb R^k$ the adjoint of $F_t$,
characterised by
\[
\langle F_ta,Z\rangle_F=a^{\trans}F_t^*Z,
\qquad a\in\mathbb R^k,\quad Z\in\mathbb R^{d\times d'}.
\]
The terminal coefficients satisfy
\[
P,\overline P\in\mathbb S^d,
\qquad L\in\mathbb R^d.
\]

\paragraph{Entropy-regularised solution.}
The key point is that the preceding model gives an explicit description of the optimal
randomised policy. We first clarify the value-function interpretation, which
is slightly different from that of the scalar function appearing in the
quadratic ansatz. For $t\in[0,T]$ and $\mu\in\mathcal P_2(\mathbb R^d)$, let
$J_{t,\mu}^\tau(\pi)$ denote the time-shifted social cost when the initial
state at time $t$ has law $\mu$, and set
\[
\mathcal V^\tau(t,\mu):=\inf_\pi J_{t,\mu}^\tau(\pi).
\]
The scalar function $v(t,x,\mu)$ introduced below is the corresponding
\emph{decoupled value function}: the variable $x$ denotes a tagged initial
state, whereas $\mu$ describes the population law. Consequently, the social
value is obtained by averaging the decoupled value against the initial law,
namely
\begin{equation}\label{eq:social-decoupled-value}
\mathcal V^\tau(t,\mu)
=\int_{\mathbb R^d}v(t,x,\mu)\,\mu(\d x).
\end{equation}
In particular, if $\mu_0=\mathcal L(X_0)$ and $\pi^\star$ is optimal, then
\[
J^\tau(\pi^\star)
=\mathcal V^\tau(0,\mu_0)
=\E\bigl[v(0,X_0,\mu_0)\bigr].
\]

Suppose now that $A=\mathbb R^k$, that $N_t$ is uniformly positive
definite, and take as regulariser the relative entropy with respect to a
fixed probability measure $\varrho$:
\begin{equation}\label{eq:relative:entropy}
h(m)=
\begin{cases}
\displaystyle\int_{\mathbb R^k}
\log\!\left(\frac{\d m}{\d\varrho}(a)\right)m(\d a),
&m\ll\varrho,\\[6pt]
+\infty,&\text{otherwise}.
\end{cases}
\end{equation}
For a general reference measure, the optimal policy is given by the Gibbs
formula below, but it need not be Gaussian and the value function need not
remain quadratic. A finite-dimensional LQ closure is obtained when
$\varrho$ is Gaussian. The negative differential-entropy case is obtained
instead by using Lebesgue measure as the underlying sigma-finite measure.

For completeness, a transparent sufficient condition for the solvability of
the associated Riccati system is that, for some $\delta>0$,
\begin{align}
N_t&\succeq\delta I_k,
&P&\succeq0,
&P+\overline P&\succeq0,
\label{eq:LQ-Riccati-coercivity}\\
Q_t-I_t^{\trans}N_t^{-1}I_t&\succeq0,
&Q_t+\overline Q_t
-(I_t+\overline I_t)^{\trans}N_t^{-1}(I_t+\overline I_t)&\succeq0.
\nonumber
\end{align}

These conditions are sufficient, rather than necessary. They ensure that the
matrix Riccati equations admit global solutions with
$K_t,\Lambda_t\succeq0$ and hence that
$S_t=N_t+F_t^*K_tF_t$ is uniformly positive definite. Under these conditions,
the verification argument gives the decoupled value
\[
v(t,x,\mu)
=(x-\overline\mu)^{\trans}K_t(x-\overline\mu)
+\overline\mu^{\trans}\Lambda_t\overline\mu
+2r_t^{\trans}x+s_t.
\]
The corresponding matrix, vector, and scalar ODEs have terminal conditions
\[
K_T=P,\qquad \Lambda_T=P+\overline P,
\qquad r_T=L,\qquad s_T=0.
\]
We do not reproduce the Riccati system. Its derivation follows the same
completion-of-squares and entropy-minimisation argument as
\cite[Section~4 and Appendix~B]{frikha:germain:lauriere:pham:song}, after
replacing the sampled control in that reference by the policy mean in the
drift and diffusion. Integrating the preceding expression against $\mu$ proves
\eqref{eq:social-decoupled-value} and yields
\[
\mathcal V^\tau(t,\mu)
=\int_{\mathbb R^d}(x-\overline\mu)^{\trans}
K_t(x-\overline\mu)\,\mu(\d x)
+\overline\mu^{\trans}\Lambda_t\overline\mu
+2r_t^{\trans}\overline\mu+s_t.
\]
 Writing
\[
u_t^*:=\int_{\mathbb R^k}a\,\pi_t^*(\d a),
\qquad \overline X_t^*:=\E[X_t^*],
\]
the adjoint processes satisfy
\begin{align*}
Y_t^*
&=2K_t(X_t^*-\overline X_t^*)
+2\Lambda_t\overline X_t^*+2r_t,\\
Z_t^*
&=2K_t\bigl(
\gamma_t+D_tX_t^*+\overline D_t\overline X_t^*+F_tu_t^*
\bigr).
\end{align*}
Introduce
\begin{align*}
S_t&:=N_t+F_t^*K_tF_t,\\
U_t&:=I_t+C_t^{\trans}K_t+F_t^*K_tD_t,\\
\widehat U_t
&:=I_t+\overline I_t+C_t^{\trans}\Lambda_t
+F_t^*K_t(D_t+\overline D_t),\\
O_t&:=H_t+C_t^{\trans}r_t+F_t^*K_t\gamma_t,
\end{align*}
and set, for $x,\overline x\in\mathbb R^d$,
\[
q_t(x,\overline x)
:=U_tx+(\widehat U_t-U_t)\overline x+O_t,
\qquad
q_t^*:=q_t(X_t^*,\overline X_t^*).
\]
Substitution of the preceding expressions for $Y_t^*$ and $Z_t^*$ gives
\begin{equation}\label{eq:LQ-linear-energy}
\ell_t^*
:=C_t^{\trans}Y_t^*+F_t^*Z_t^*
+2I_tX_t^*+2\overline I_t\overline X_t^*+2H_t
=2q_t^*+2(S_t-N_t)u_t^*.
\end{equation}
The part of the Hamiltonian depending on the action is then generated by the
quadratic energy
\[
\mathcal E_t^*(a)=a^{\trans}N_ta+a^{\trans}\ell_t^*.
\]
More precisely, the LQ verification argument reduces the optimisation over
the control measure, pointwise in $(t,x,\overline x)$, to the minimisation of
\begin{align*}
\Phi_t(m;x,\overline x)
:={}&\int_{\mathbb R^k}a^{\trans}N_ta\,m(\d a)
+2q_t(x,\overline x)^{\trans}\overline m
+\overline m^{\trans}(S_t-N_t)\overline m
+\tau h(m),
\end{align*}
where $\overline m=\int_{\mathbb R^k}a\,m(\d a)$. Its first-order condition
at $m=\pi_t^*$ involves the energy $\mathcal E_t^*$ above. Since $N_t$ is
positive definite, the functional is coercive. Moreover, $K_t$ is positive
semidefinite under the stated Riccati solvability conditions, so that
$S_t-N_t=F_t^*K_tF_t$ is positive semidefinite. Since the relative entropy is
strictly convex, $\Phi_t$ is strictly convex for $\tau>0$, and its unique
minimiser is the Gibbs policy
\begin{equation}\label{eq:LQ-Gibbs-policy}
\pi_t^*(\d a)
=\frac{
\exp\bigl(-\mathcal E_t^*(a)/\tau\bigr)
}{
\int_{\mathbb R^k}\exp\bigl(-\mathcal E_t^*(a')/\tau\bigr)\varrho(\d a')
}\,\varrho(\d a),
\qquad \tau>0.
\end{equation}
In particular, if $\varrho=\mathcal N(m_\varrho,\Sigma_\varrho)$, the Gibbs
formula first expresses the mean $u_t^*$ in terms of $\ell_t^*$ as
\[
u_t^*
=\left(\Sigma_\varrho^{-1}+\frac{2}{\tau}N_t\right)^{-1}
\left(\Sigma_\varrho^{-1}m_\varrho-\frac{1}{\tau}\ell_t^*\right).
\]
Combining this identity with~\eqref{eq:LQ-linear-energy} gives
\begin{align}\label{eq:LQ-Gaussian-policy}
\pi_t^*&=\mathcal N(u_t^*,\Gamma_t^*),
\end{align}
\noindent with
\begin{align}
u_t^*
&=\left(\Sigma_\varrho^{-1}+\frac{2}{\tau}S_t\right)^{-1}
\left(\Sigma_\varrho^{-1}m_\varrho-\frac{2}{\tau}q_t^*\right), \qquad 
\Gamma_t^*
=\left(\Sigma_\varrho^{-1}+\frac{2}{\tau}N_t\right)^{-1}.
\nonumber\\
\end{align}
If $\varrho$ is the Lebesgue measure, then $h$ is the usual negative
differential entropy,
\[
h(m)=\int_{\mathbb R^k}p(a)\log p(a)\,\d a, \quad m(\d a)=p(a)\,\d a.
\]
In this case, the Gibbs formula gives
\begin{equation}\label{eq:LQ-Gaussian-policy-lebesgue}
\pi_t^*
=\mathcal N\left(
-S_t^{-1}q_t^*,
\frac{\tau}{2}N_t^{-1}
\right).
\end{equation}
Thus the optimal mean is an affine feedback of
$(X_t^*,\overline X_t^*)$ and the covariance is deterministic. The same
Gaussian-feedback mechanism for entropy-regularised LQ control is derived in
\cite{wanzarzhou20} and, in the mean-field setting, in
\cite[Section~4 and Theorem~B.1]{frikha:germain:lauriere:pham:song}. This
finite-dimensional representation makes the model particularly convenient
for testing SMD: with a Gaussian reference and Gaussian initialization, the
iterates can be tracked through their means and covariance matrices.

\paragraph{Aggregate curvature.}
We finally identify the relative-semiconvexity condition in this model. For
$\pi,\pi'\in\mathcal A_{\mathscr C}$, set
\begin{align*}
\delta X_t&:=X_t^{\pi'}-X_t^\pi,
&\delta\overline X_t&:=\E[\delta X_t],\\
u_t^\pi&:=\int_Aa\,\pi_t(\d a),
&\delta u_t&:=u_t^{\pi'}-u_t^\pi,
\qquad
\delta\overline u_t:=\E[\delta u_t].
\end{align*}
We further introduce the centered increments
\[
\widehat{\delta X}_t:=\delta X_t-\delta\overline X_t,
\qquad
\widehat{\delta u}_t:=\delta u_t-\delta\overline u_t.
\]

\begin{Proposition}[Mean-fluctuation LQ reduction]\label{prop:lq-semiconvexity}
For the preceding LQ model, it holds
\begin{align}
&\mathfrak R_g(X_T^{\pi'}\mid X_T^\pi)
+\int_0^T\mathfrak R_{H,t}^\pi(\pi'\mid\pi)\,\d t
=\mathcal Q_{\mathrm{LQ}}(\pi',\pi),
\label{eq:LQ-remainder}
\end{align}
\noindent where
\begin{align}
\mathcal Q_{\mathrm{LQ}}(\pi',\pi)
:={}&
\E[\widehat{\delta X}_T^{\trans}P\widehat{\delta X}_T]
+\delta\overline X_T^{\trans}(P+\overline P)\delta\overline X_T
\nonumber\\
&+\int_0^T\Big(
\E[\widehat{\delta X}_t^{\trans}Q_t\widehat{\delta X}_t
+2\widehat{\delta u}_t^{\trans}I_t\widehat{\delta X}_t]
\label{eq:LQ-polynomial}\\
&\hspace{24mm}
+\delta\overline X_t^{\trans}(Q_t+\overline Q_t)\delta\overline X_t
+2\delta\overline u_t^{\trans}(I_t+\overline I_t)
\delta\overline X_t
\Big)\,\d t.
\nonumber
\end{align}
Hence Assumption~\ref{ass:relative-semiconvexity} is equivalent in this case
to
\[
\mathcal Q_{\mathrm{LQ}}(\pi',\pi)
\geq-\rho\mathcal D_h(\pi'\mid\pi),
\qquad \pi,\pi'\in\mathcal A_{\mathscr C}.
\]
In particular, positivity of the aggregate quadratic form
$\mathcal Q_{\mathrm{LQ}}$ gives $\rho=0$.
\end{Proposition}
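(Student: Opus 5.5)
The plan is to combine the affine reduction \eqref{eq:affine-cost-semiconvexity} with a direct expansion of the quadratic costs. In the LQ model, $b_t$ and $\sigma_t$ are affine in $(x,\overline\mu)$ and depend on $m$ only through $\overline m$, hence are affine in $m$. Assumption~\ref{ass:affine-dynamics} therefore holds, with $D_xb_t=B_t$, $\partial_\mu b_t=\overline B_t$, $D_x\sigma_t=D_t$, $\partial_\mu\sigma_t=\overline D_t$. The remark preceding \eqref{eq:affine-cost-semiconvexity} then gives $\mathfrak R_{H,t}^\pi(\pi'\mid\pi)=\mathfrak R_{f,t}((X_t^{\pi'},\pi_t')\mid(X_t^\pi,\pi_t))$. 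It remains to compute $\mathfrak R_g$ and $\mathfrak R_{f,t}$ exactly. I will do this with the flat and Lions derivatives under the paper's normalisation:
\[
\frac{\delta f_t}{\delta m}(x,\mu,m,a)=a^\trans N_ta-\int a'^\trans N_ta'\,m(\d a')+2(a-\overline m)^\trans(I_tx+\overline I_t\overline\mu+H_t),
\]
\[
\partial_\mu f_t(x,\mu,m)(\xi)=2\overline Q_t\overline\mu+2\overline I_t^\trans\overline m,\qquad D_xf_t=2Q_tx+2I_t^\trans\overline m+2M_t.
\]
The terms for $g$ are analogous.

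For the terminal remainder, a second-order Taylor expansion of a quadratic is exact. Writing $\delta X=X_T^{\pi'}-X_T^\pi$, the pointwise remainder of $x^\trans Px$ is $\delta X^\trans P\delta X$. The $\mu$-part $\overline\mu^\trans\overline P\overline\mu$ has Lions derivative $2\overline P\overline\mu$, constant in $\xi$. Its averaged remainder is therefore $\delta\overline X^\trans\overline P\delta\overline X$, because the independent-copy expectation $\widetilde\E[\widetilde X'-\widetilde X]=\delta\overline X$. The linear term $2L^\trans x$ cancels. Splitting $\E[\delta X^\trans P\delta X]=\E[\widehat{\delta X}^\trans P\widehat{\delta X}]+\delta\overline X^\trans P\delta\overline X$ yields the two terminal terms of $\mathcal Q_{\mathrm{LQ}}$.

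For the running cost I will do the same term by term. The $Q_t$ and $\overline Q_t$ terms give $\E[\delta X_t^\trans Q_t\delta X_t]+\delta\overline X_t^\trans\overline Q_t\delta\overline X_t$. The term $\int a^\trans N_ta\,m(\d a)$ is linear in $m$, so its flat-derivative remainder vanishes; the normalisation constant integrates to zero against $m'-m$. The same holds for $2H_t^\trans\overline m$ and $2M_t^\trans x$. The bilinear terms are $2\overline m^\trans I_tx$ and $2\overline m^\trans\overline I_t\overline\mu$. Here one subtracts the $x$-derivative term $2\overline m^\trans I_t\delta X$, the $\mu$-derivative term $2\overline m^\trans\overline I_t\delta\overline X$, and the flat-derivative term $2\delta u^\trans(I_tx+\overline I_t\overline\mu)$. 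What remains is the exact products $2\delta u_t^\trans I_t\delta X_t$ and $2\delta u_t^\trans\overline I_t\delta\overline X_t$. After taking expectations, one uses that $\delta\overline X_t$ is deterministic, so that $\E[\delta u_t^\trans\overline I_t\delta\overline X_t]=\delta\overline u_t^\trans\overline I_t\delta\overline X_t$. One also uses the decomposition $\E[\delta u^\trans I\delta X]=\E[\widehat{\delta u}^\trans I\widehat{\delta X}]+\delta\overline u^\trans I\delta\overline X$. Collecting mean and fluctuation terms gives exactly \eqref{eq:LQ-polynomial}. The equivalence with Assumption~\ref{ass:relative-semiconvexity} then follows by substitution, and $\mathcal Q_{\mathrm{LQ}}\ge0$ gives $\rho=0$ trivially.

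The main obstacle is bookkeeping rather than analysis. The delicate points are the normalised flat derivative of the bilinear terms, where the centering by $\overline m$ must cancel correctly against the $m'-m$ integration, and the correct handling of the independent copy in the Lions-derivative terms. One must also check integrability: all terms are finite under the second-moment condition on controls from Assumption~\ref{ass:H3gen} and the state estimate of Proposition~\ref{prop:wellposed}. This justifies the exchange of expectations and time integrals.
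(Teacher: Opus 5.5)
Your proposal is correct and follows essentially the same route as the paper's proof. The drift and diffusion are affine in $(x,\overline\mu,m)$, with the control entering only through $\overline m$, so their remainders vanish and $\mathfrak R_{H,t}^\pi$ reduces to the cost remainder; the affine and linear-in-$m$ cost terms drop out; and the exact quadratic remainders $\delta X_t^{\trans}Q_t\delta X_t$, $\delta\overline X_t^{\trans}\overline Q_t\delta\overline X_t$, $2\delta u_t^{\trans}I_t\delta X_t$ and $2\delta u_t^{\trans}\overline I_t\delta\overline X_t$ are then split into mean and fluctuation parts. Your explicit computation of the normalised flat and Lions derivatives for the bilinear terms simply spells out the step the paper leaves implicit when it writes the cross-term decomposition.
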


\begin{proof}
The drift and diffusion are affine in $(x,\overline\mu,m)$, so their
first-order remainders vanish. The affine terms in the running and terminal
costs cancel from their first-order remainders. The same is true of the
control cost because the map
$m\mapsto\int_Aa^{\trans}N_ta\,m(\d a)$ is affine on the space of probability
measures.

It remains to decompose the quadratic terms. For every symmetric matrix $M$,
the centered increments satisfy
\[
\E[\delta X_t^{\trans}M\delta X_t]
=\E[\widehat{\delta X}_t^{\trans}M\widehat{\delta X}_t]
+\delta\overline X_t^{\trans}M\delta\overline X_t,
\]
because $\E[\widehat{\delta X}_t]=0$. Similarly,
\[
\E[\delta u_t^{\trans}I_t\delta X_t]
=\E[\widehat{\delta u}_t^{\trans}I_t\widehat{\delta X}_t]
+\delta\overline u_t^{\trans}I_t\delta\overline X_t.
\]
Applying the first identity with $M=P$ at time $T$ and with $M=Q_t$ at
time $t$, and then combining the resulting mean terms with those involving
$\overline P$, $\overline Q_t$, and $\overline I_t$, yields
\eqref{eq:LQ-remainder}.
\end{proof}

The proof of the following elementary sufficient condition is immediate from
Proposition~\ref{prop:lq-semiconvexity}.

\begin{Corollary}[Elementary LQ positivity criterion]
\label{cor:lq-positivity}
Suppose that
\[
P\succeq0,
\qquad P+\overline P\succeq0,
\]
and, for almost every $t\in[0,T]$,
\[
Q_t\succeq0,
\qquad Q_t+\overline Q_t\succeq0,
\qquad I_t=0,
\qquad I_t+\overline I_t=0.
\]
Then $\mathcal Q_{\mathrm{LQ}}(\pi',\pi)\geq0$ for every admissible pair
$(\pi,\pi')$. Consequently, Assumption~\ref{ass:relative-semiconvexity}
holds with $\rho=0$. Notice that the conditions do not require
$\overline P\succeq0$ or $\overline Q_t\succeq0$. Thus the criterion may hold
even when the terminal cost or the Hamiltonian is not pointwise convex in the
measure argument; this is the case, for instance, when
$\overline P=-P$ and $\overline Q_t=-Q_t$.
\end{Corollary}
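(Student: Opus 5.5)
The corollary follows by inspecting the mean--fluctuation decomposition of Proposition~\ref{prop:lq-semiconvexity} term by term. That proposition gives the identity~\eqref{eq:LQ-remainder}, so it suffices to show that $\mathcal Q_{\mathrm{LQ}}(\pi',\pi)\geq0$ for every admissible pair. The equivalence stated there then yields Assumption~\ref{ass:relative-semiconvexity} with $\rho=0$, since $-\rho\mathcal D_h(\pi'\mid\pi)=0$ in that case.

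The plan is to take the four groups in~\eqref{eq:LQ-polynomial} in turn.

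\textbf{Cross terms.} The hypotheses $I_t=0$ and $I_t+\overline I_t=0$ for a.e.\ $t$ make both cross terms vanish. These are $2\E[\widehat{\delta u}_t^{\trans}I_t\widehat{\delta X}_t]$ and $2\delta\overline u_t^{\trans}(I_t+\overline I_t)\delta\overline X_t$. This is what frees us from any control on $\delta u$, and hence from any dependence on $D_h$.

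\textbf{Fluctuation terms.} The terms $\E[\widehat{\delta X}_T^{\trans}P\widehat{\delta X}_T]$ and $\E[\widehat{\delta X}_t^{\trans}Q_t\widehat{\delta X}_t]$ are expectations of nonnegative quantities, because $P\succeq0$ and $Q_t\succeq0$.

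\textbf{Mean terms.} The terms $\delta\overline X_T^{\trans}(P+\overline P)\delta\overline X_T$ and $\delta\overline X_t^{\trans}(Q_t+\overline Q_t)\delta\overline X_t$ are nonnegative, because $P+\overline P\succeq0$ and $Q_t+\overline Q_t\succeq0$.

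Integrating the nonnegative time integrand over $[0,T]$ and adding the terminal contributions gives $\mathcal Q_{\mathrm{LQ}}\geq0$.

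The only point needing care is that every expression is finite and well defined. This requires $\delta X$ to be square integrable, which holds by Proposition~\ref{prop:wellposed}. It also requires $\delta u$ to be square integrable, which follows from the control-moment condition in Assumption~\ref{ass:H3gen}; in any case $\delta u$ disappears once $I$ vanishes. A further point is that the a.e.-in-$t$ hypotheses suffice under the $\d t$ integral.

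For the closing remark, substitute $\overline P=-P$ and $\overline Q_t=-Q_t$. Then $P+\overline P=0$ and $Q_t+\overline Q_t=0$ are trivially positive semidefinite, while $\overline P\preceq0$. For $P\neq0$ the map $\mu\mapsto\overline\mu^{\trans}\overline P\,\overline\mu$ is then concave, so pointwise convexity in the measure argument fails while the criterion still holds. No step is a genuine obstacle; the argument is a direct sign check.
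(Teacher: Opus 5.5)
Your proposal is correct and is exactly the argument the paper has in mind when it calls the corollary immediate from Proposition~\ref{prop:lq-semiconvexity}. The conditions $I_t=0$ and $I_t+\overline I_t=0$ remove both cross terms in $\mathcal Q_{\mathrm{LQ}}$, the remaining fluctuation and mean quadratic forms are nonnegative by $P,Q_t\succeq0$ and $P+\overline P,\,Q_t+\overline Q_t\succeq0$, and your handling of the closing example with $\overline P=-P$, $\overline Q_t=-Q_t$ agrees with the paper.
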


\begin{Remark}
The absence of $N_t$ from $\mathcal Q_{\mathrm{LQ}}$ is specific to relaxed
controls: although $a\mapsto a^{\trans}N_ta$ is quadratic, its integral is
affine in the probability measure $m$. Pointwise joint convexity in $(x,m)$
would therefore typically force the cross matrices $I_t$ and
$\overline I_t$ to vanish. Condition~\eqref{eq:LQ-remainder} is weaker because
it only requires positivity, or a controlled negative part, of the aggregate
form along state variations generated by admissible controls. In particular,
Corollary~\ref{cor:lq-positivity} is only a sufficient criterion: nonzero
cross terms may still give a nonnegative aggregate form once the dynamics of
the state variation are taken into account.
\end{Remark}

\begin{Example}[Procyclical funding and negative curvature]
\label{ex:procyclical-systemic-risk}
Consider the scalar systemic-risk model
\begin{align*}
\d X_t
&=\bigl[\beta(\E[X_t]-X_t)+\overline a(\pi_t)\bigr]\,\d t
+\gamma\,\d W_t,\\
f(x,\mu,m)
&=Q(x^2-\overline\mu^2)+\frac12\int_{\mathbb R}a^2m(\d a)
+2I(x-\overline\mu)\overline a(m),\\
g(x,\mu)&=P(x^2-\overline\mu^2).
\end{align*}
The usual stabilising specification has $I>0$: a bank below the average
reserve level is encouraged to borrow, whereas a bank above the average
level lends. A negative value of $I$ models a procyclical funding channel.
Better-capitalised banks then expand their balance sheets further, while
weaker banks deleverage because of tighter funding or margin constraints.
Such a mechanism amplifies cross-sectional dispersion and is a genuine
source of negative curvature for the social planner.

For two controls, let
$\xi_t=\widehat{\delta X}_t$ and
$v_t=\widehat{\delta u}_t$. Then
\[
\d\xi_t=(-\beta\xi_t+v_t)\,\d t,
\qquad \xi_0=0,
\]
and Proposition~\ref{prop:lq-semiconvexity} reduces to
\begin{equation}\label{eq:procyclical-curvature}
\mathcal Q_{\mathrm{LQ}}(\pi',\pi)
=(P+I)\E|\xi_T|^2
+(Q+2\beta I)\E\int_0^T|\xi_t|^2\,\d t.
\end{equation}
Suppose $P+I\geq0$, $Q+2\beta I<0$, and that the Gaussian covariance of the
base control is bounded by $\overline\Gamma$. Since
\[
\int_0^T\E|\xi_t|^2\,\d t
\leq c_{\beta,T}\E\int_0^T|v_t|^2\,\d t,
\qquad
c_{\beta,T}:=\left(\frac{1-e^{-\beta T}}{\beta}\right)^2,
\]
and, writing $\delta u_t=u_t^{\pi'}-u_t^\pi$, the Gaussian
relative-entropy formula gives
\[
\E|v_t|^2
=\operatorname{Var}(\delta u_t)
\leq\E|\delta u_t|^2
\leq 2\overline\Gamma\,
\E\bigl[D_{\mathrm{KL}}(\pi_t'\mid\pi_t)\bigr],
\]
it follows that
\begin{equation}\label{eq:procyclical-rho}
\mathcal Q_{\mathrm{LQ}}(\pi',\pi)
\geq-\rho_{\rm pc}\mathcal D_h(\pi'\mid\pi),
\qquad
\rho_{\rm pc}
=2\overline\Gamma\,[-(Q+2\beta I)]c_{\beta,T}>0.
\end{equation}
Thus entropy regularisation convexifies this financially motivated
nonconvex model as soon as $\tau\geq\rho_{\rm pc}$.
\end{Example}

\subsection{Examples of regularisers}\label{sec:examples-h}

The purpose of this subsection is to verify that Assumption~\ref{ass:iterates}
holds, that is, the mirror step is closed in the domain of the chosen
regulariser. The proofs of Propositions~\ref{prop:KL}--\ref{prop:eot} and Corollary~\ref{cor:gaussian-lq-smd} are
collected in Section~\ref{sec:proofs-regularisers}. When $A$ is a compact metric space, $C(A)$ denotes the Banach
space of real-valued continuous functions on $A$, equipped with the supremum
norm. The relevant quantity is the action-dependent stochastic gradient
\[
a \mapsto \mathscr G_t^\pi(a)
:=\frac{\delta H_t^0}{\delta m}
  (\Theta_t^\pi,\pi_t,a).
\]
Its required integrability depends on the dual geometry of the regulariser:
$L^\infty(A)$ for relative entropy under bounded derivatives, $L^2(\varrho)$
for the $\chi^2$ geometry, and $C(A)$ for entropic optimal transport on a
compact action space. Proposition~\ref{prop:wellposed} supplies the required
integrability of $(X^\pi,Y^\pi,Z^\pi)$ in both the bounded-derivative and the
linearly growing regimes. The genuine LQ framework deserves separate treatment:
although $\mathscr G_t^\pi(a)$ is quadratic in $a$ and hence unbounded, the
entropy mirror step is explicit and preserves Gaussian policies.

\begin{Proposition}[Entropy regularisation]\label{prop:KL}
The following three regimes guarantee well-posedness of the entropy mirror
step.
\begin{enumerate}
\item[(i)] Fix $\varrho\in\mathcal P(A)$ and let
\[
\mathscr C_\infty(\varrho)
:=\left\{m\ll\varrho:
\left\|\log\frac{\d m}{\d\varrho}\right\|_{L^\infty(A)}<\infty\right\}.
\]
Let $h$ be given by \eqref{eq:relative:entropy}. Take $\mathscr C = \mathscr C_\infty(\varrho)$, and $\nu_0=\varrho$ in the definition of
$\mathcal A_{\mathscr C}$. Suppose
Assumptions~\ref{ass:H1}--\ref{ass:H3} hold and
\[
  K_\delta:=\sup_{t,x,\mu,m,a}\Bigl(\bigl|\tfrac{\delta b_t}{\delta m}(x,\mu,m,a)\bigr|
   +\bigl|\tfrac{\delta\sigma_t}{\delta m}(x,\mu,m,a)\bigr|
   +\bigl|\tfrac{\delta f_t}{\delta m}(x,\mu,m,a)\bigr|\Bigr)<\infty.
\]
Then Assumption~\ref{ass:iterates} holds for every $\pi^0\in\AC$ with
$\E\int_0^T\|\ln\tfrac{\d\pi^0_t}{\d\varrho}\|_{L^\infty(A)}\,\d t<\infty$, and moreover
$\E\int_0^T\|\ln\tfrac{\d\pi^n_t}{\d\varrho}\|_{L^\infty(A)}\,\d t<\infty$ for all $n\ge 0$.
\item[(ii)] Let $A=\mathbb R^k$, fix a full-support measure
$\varrho\in\mathcal P_2(\mathbb R^k)$, and set
\[
\mathscr C_{\rm qent}(\varrho)
:=\left\{m=f\varrho\in\mathcal P_2(\mathbb R^k):
f>0\ \varrho\text{-a.e.},\quad
\int_{\mathbb R^k}f(a)|\log f(a)|\,\varrho(\d a)<\infty\right\}.
\]
Let $h$ be given by \eqref{eq:relative:entropy}. Take $\mathscr C=\mathscr C_{\rm qent}(\varrho)$, and $\nu_0=\varrho$ in the definition of
$\mathcal A_{\mathscr C}$. Suppose Assumptions~\ref{ass:H1},
\ref{ass:H2}, \ref{ass:H3gen}, and~\ref{ass:affine-dynamics} hold. Assume
that, for every $\pi\in\mathcal A_{\mathscr C}$, the stochastic action gradient satisfies, up to an $a$-independent progressively measurable term,
\begin{equation}\label{eq:quadratic-action-gradient}
\kappa^{-1} |a|^2-\zeta_t^\pi
\leq \mathscr G_t^\pi(a)
\leq \kappa |a|^2+\zeta_t^\pi,
\qquad a\in\mathbb R^k,
\end{equation}
for a constant $\kappa\geq 1$ and a nonnegative process satisfying
$\E\int_0^T\zeta_t^\pi\,\d t<\infty$. If $\lambda>\tau>0$, then
Assumption~\ref{ass:iterates} holds for every
$\pi^0\in\mathcal A_{\mathscr C}$.

\item[(iii)] Let $A=\mathbb R^k$ and let $h$ be the negative differential
entropy
\[
h(m)=\int_{\mathbb R^k}p(a)\log p(a)\,\d a,
\qquad m(\d a)=p(a)\,\d a,
\]
on the convex domain
\[
\mathscr C_{\rm ent}^{,2}
:=\left\{m=p\,\mathcal L^k\in\mathcal P_2(\mathbb R^k):
p>0\ \mathcal L^k\text{-a.e.},\quad
\int_{\mathbb R^k}p(a)|\log p(a)|\,\d a<\infty\right\}.
\]
Thus, in this regime, $\mathscr C=\mathscr C_{\rm ent}^{,2}$. Assume that we are in
the affine-LQ setting of Section~\ref{sec:affine-lq}, fix a nondegenerate
Gaussian measure $\nu_0\in\mathscr C_{\rm ent}^{,2}$, and use it as the
reference measure in the definition of $\mathcal A_{\mathscr C}$. Under the Riccati
solvability conditions stated there and when $\tau>0$, the optimal policy
$\pi^\star$ is given by~\eqref{eq:LQ-Gaussian-policy-lebesgue}, with
Lebesgue measure as the underlying sigma-finite reference measure, and is
Gaussian. The Gaussian
measures nevertheless form a nonconvex subset of
$\mathscr C_{\rm ent}^{,2}$: Gaussianity is an invariant property of the SMD
orbit, not a restriction of the minimisation domain.

Assume that
$N_t\succeq\nu I_k$ for some $\nu>0$, and let $\lambda\geq\tau$. Suppose that for some constant $\gamma\geq 1$
\[
\pi_t^0=\mathcal N(u_t^0,\Gamma_t^0), \qquad
0< \gamma^{-1} I_k\preceq\Gamma_t^0
\preceq \gamma I_k, \qquad
\E\int_0^T|u_t^0|^2\,\d t<\infty.
\]
Then every SMD iterate is a nondegenerate Gaussian policy
$\pi_t^n=\mathcal N(u_t^n,\Gamma_t^n)$. Writing
$R_t^n=(\Gamma_t^n)^{-1}$, $\eta_t^n=R_t^nu_t^n$, and
\[
\ell_t^n=C_t^{\trans}Y_t^{\pi^n}+F_t^*Z_t^{\pi^n}
+2I_tX_t^{\pi^n}+2\overline I_t\E[X_t^{\pi^n}]+2H_t,
\]
the SMD update is
\begin{align}
R_t^{n+1}
&=\left(1-\frac{\tau}{\lambda}\right)R_t^n
+\frac{2}{\lambda}N_t,
&
\eta_t^{n+1}
&=\left(1-\frac{\tau}{\lambda}\right)\eta_t^n
-\frac1\lambda\ell_t^n,
\label{eq:abstract-gaussian-update}
\end{align}
so that $\Gamma_t^{n+1}=(R_t^{n+1})^{-1}$ and
$u_t^{n+1}=\Gamma_t^{n+1}\eta_t^{n+1}$. In particular,
Assumption~\ref{ass:iterates} holds and
$\mathcal D_h(\pi^{n+1}\mid\pi^n)<\infty$ for all $n$.
\end{enumerate}
\end{Proposition}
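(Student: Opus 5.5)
The common starting point for all three regimes is the explicit Gibbs form of the entropy mirror step. For relative entropy the flat derivative is $\frac{\delta h}{\delta m}(m,a)=\log\frac{\d m}{\d\varrho}(a)-h(m)$, so that the normalisation $\int\frac{\delta h}{\delta m}\,\d m=0$ holds. Write $\mathscr G_t^n:=\mathscr G_t^{\pi^n}$. Since $\frac{\delta H^\tau_t}{\delta m}=\mathscr G_t^n+\tau\frac{\delta h}{\delta m}$, the objective in \eqref{eq:MDupdate} equals, up to $m$-independent terms,
\[
\int_A\Bigl(\mathscr G_t^n+(\tau-\lambda)\log\tfrac{\d\pi_t^n}{\d\varrho}\Bigr)\,\d m+\lambda h(m).
\]
This is a Donsker--Varadhan variational problem. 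Its unique minimiser, when the normaliser is finite, is
\[
\frac{\d\pi_t^{n+1}}{\d\varrho}\propto\Bigl(\frac{\d\pi_t^n}{\d\varrho}\Bigr)^{1-\tau/\lambda}\exp\bigl(-\mathscr G_t^n/\lambda\bigr).
\]
In each regime I will (a) prove that the normaliser is finite and the resulting measure lies in $\mathscr C$, (b) check progressive measurability, which follows from the joint measurability convention for flat derivatives and the measurability of $(X,Y,Z)$, and (c) verify $\pi^{n+1}\in\mathcal A_{\mathscr C}$ together with $\mathcal D_h(\pi^{n+1}\mid\pi^n)<\infty$. The argument proceeds by induction on $n$.

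\textbf{Regime (i).} Under Assumption~\ref{ass:H3} and Proposition~\ref{prop:wellposed}(1), the adjoint satisfies an $H^\infty$/BMO-type bound. The paper claims uniform bounds on $Y$; to be safe I will only use $\E\int|Y|+|Z|<\infty$. Hence
\[
\|\mathscr G_t^n\|_{L^\infty(A)}\le K_\delta(1+|Y_t^n|+|Z_t^n|),
\]
and this bound is integrable in $(\omega,t)$. Taking logarithms of the Gibbs formula, after removing the normalising constant, gives
\[
\Bigl\|\log\tfrac{\d\pi^{n+1}_t}{\d\varrho}\Bigr\|_\infty\le 2\Bigl(\bigl|1-\tfrac{\tau}{\lambda}\bigr|\,\Bigl\|\log\tfrac{\d\pi^n_t}{\d\varrho}\Bigr\|_\infty+\lambda^{-1}\|\mathscr G^n_t\|_\infty\Bigr).
\]
By induction, $\E\int_0^T\|\log\frac{\d\pi^{n}_t}{\d\varrho}\|_\infty\,\d t<\infty$ for every $n$. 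The remaining quantities $h(\pi^n_t)$, $D_h(\pi^n_t\mid\varrho)=h(\pi^n_t)$ and $D_h(\pi^{n+1}_t\mid\pi^n_t)$ are each bounded by sums of these sup-norms of log-densities. This gives admissibility.

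\textbf{Regime (ii).} Write $\pi^n_t=f^n_t\varrho$ and use \eqref{eq:quadratic-action-gradient}. The difficulty is that $(f^n_t)^{1-\tau/\lambda}$ is not bounded. Since $0<1-\tau/\lambda<1$, Hölder's inequality gives
\[
\int (f^n)^{1-\tau/\lambda}e^{-\mathscr G/\lambda}\,\d\varrho\le\Bigl(\int f^n\,\d\varrho\Bigr)^{1-\tau/\lambda}\Bigl(\int e^{-\lambda\mathscr G/(\tau\lambda)}\,\d\varrho\Bigr)^{\tau/\lambda},
\]
and the right-hand side is at most $e^{\zeta/\tau}$ because of the lower quadratic bound. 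The normaliser is also bounded below: by Jensen it is at least $\exp\bigl(-(1-\tau/\lambda)h^-(\pi^n)-\lambda^{-1}\int\mathscr G\,\d\pi^n\bigr)$-type quantities, which are controlled using $\kappa|a|^2$ and $M_{2,A}(\pi^n)$. Second moments of $\pi^{n+1}$ are then controlled because the Gaussian-type factor $e^{-|a|^2/(\kappa\lambda)}$ beats $|a|^2$. The term $f|\log f|$ is controlled through
\[
\log f^{n+1}=(1-\tfrac{\tau}{\lambda})\log f^n-\lambda^{-1}\mathscr G-\log Z.
\]
Integrating $|\cdot|$ against $\pi^{n+1}$ then requires $\int|\log f^n|\,\d\pi^{n+1}$, which I expect to be \textbf{the main obstacle}. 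I would try a Young/entropy inequality, $\int|\phi|\,\d\pi^{n+1}\le D_{\rm KL}(\pi^{n+1}\mid\pi^n)+\log\int e^{|\phi|}\,\d\pi^n$, combined with a truncation of $\log f^n$ from above and below. If this fails, I would strengthen the induction hypothesis to a quadratic two-sided bound on $\log f^n$.

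\textbf{Regime (iii).} This is a direct computation. The LQ action gradient is $\mathscr G_t^n(a)=a^\trans N_ta+a^\trans\ell_t^n$ up to an $a$-independent term, and the Gibbs step with Lebesgue reference multiplies $\exp\bigl(-\tfrac12 a^\trans R a+\eta^\trans a\bigr)^{1-\tau/\lambda}$ by $\exp\bigl(-(a^\trans Na+a^\trans\ell)/\lambda\bigr)$. Matching coefficients gives \eqref{eq:abstract-gaussian-update}. Inductively,
\[
R^{n+1}\succeq\min\bigl(\gamma^{-1},\,2\nu/\tau\bigr)I_k,
\]
since the update is a convex combination of $R^n$ and $2N/\tau$ when $\lambda\ge\tau$, and $R^{n+1}$ is likewise bounded above. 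This yields uniform two-sided covariance bounds. Square integrability of $u^{n+1}$ follows from that of $\ell^n$, which in turn comes from Proposition~\ref{prop:wellposed}(2). Gaussian entropy and Kullback--Leibler formulas then give finiteness of $h$, $\mathcal D_h(\pi^n\mid\nu_0)$ and $\mathcal D_h(\pi^{n+1}\mid\pi^n)$.
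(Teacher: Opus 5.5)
Your regimes (i) and (iii) are correct and follow the paper's route: the Gibbs form of the step, a sup-norm induction on $\log\frac{\d\pi^n_t}{\d\varrho}$ in (i), and coefficient matching plus spectral bounds on $R^n_t$ in (iii). There are two minor slips. First, Jensen gives $\log C^n_t\ge-\frac{\tau}{\lambda}H^n_t-\frac1\lambda\int\mathscr G^n_t\,\d\pi^n_t$, where $C^n_t$ is the normaliser (your $Z$) and $H^n_t:=D_{\rm KL}(\pi^n_t\mid\varrho)$; the coefficient is $\tau/\lambda$, not $1-\tau/\lambda$. Second, $\lambda\ge\tau$ allows $\tau=0$ in (iii). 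There $R^n_t=R^0_t+2nN_t/\lambda$, so your covariance bounds hold for each $n$ but are not uniform; this is still enough for the claim.

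Regime (ii), however, has a genuine gap, in two places. First, the cross term $\int|\log f^n_t|\,\d\pi^{n+1}_t$, which you flag as the main obstacle, is left open, and the inequality you propose cannot close it. The identity $D_{\rm KL}(\pi^{n+1}_t\mid\pi^n_t)=H^{n+1}_t-\int\log f^n_t\,\d\pi^{n+1}_t$ contains exactly the quantity to be bounded, and $\int e^{|\log f^n_t|}\,\d\pi^n_t\ge\int(f^n_t)^2\,\d\varrho$ is in general infinite. Your fallback of propagating two-sided quadratic bounds on $\log f^n_t$ would prove the result only for a restricted class of initial policies, whereas the statement covers every $\pi^0\in\mathcal A_{\mathscr C}$. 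The missing observation is that the Gibbs density already weights $\log f^n_t$ by $(f^n_t)^q$ with $q=1-\tau/\lambda\in(0,1)$, and $x^q|\log x|\le C_q(1+x)$ on $(0,\infty)$. Together with $e^{-\mathscr G^n_t/\lambda}\le e^{\zeta_t/\lambda}$, this gives $\int|\log f^n_t|\,\d\pi^{n+1}_t\le 2C_qe^{\zeta_t/\lambda}/C^n_t<\infty$. This is precisely where $\lambda>\tau>0$ is used.

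Second, and more seriously, everything you derive for $\pi^{n+1}_t$ is pointwise in $(\omega,t)$. Admissibility needs $\E\int_0^TM_{2,A}(\pi^{n+1}_t)^2\,\d t$, $\E\int_0^T|h(\pi^{n+1}_t)|\,\d t$ and $\mathcal D_h(\pi^{n+1}\mid\pi^n)$ to be finite. Any bound read off the Gibbs density carries the factors $(C^n_t)^{-1}$ and $e^{\zeta_t/\lambda}$. Jensen only yields $(C^n_t)^{-1}\le\exp\bigl(\frac\tau\lambda H^n_t+\frac1\lambda(\kappa M_{2,A}(\pi^n_t)^2+\zeta_t)\bigr)$, which need not be integrable when merely $\E\int_0^T\zeta_t\,\d t<\infty$ is assumed.

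The paper obtains estimates that are linear in $\zeta$ instead. Test the mirror step against the competitor $\pi^n_t$, and use $\int\log f^n_t\,\d\pi^{n+1}_t\le H^{n+1}_t$ and $\lambda>\tau$. This gives $\int\mathscr G^n_t\,\d\pi^{n+1}_t+\tau H^{n+1}_t\le\int\mathscr G^n_t\,\d\pi^n_t+\tau H^n_t$, hence $\kappa^{-1}M_{2,A}(\pi^{n+1}_t)^2+\tau H^{n+1}_t\le\kappa M_{2,A}(\pi^n_t)^2+\tau H^n_t+2\zeta_t$. Next, integrate the log-recursion $\log f^{n+1}_t=q\log f^n_t-\lambda^{-1}\mathscr G^n_t-\log C^n_t$ against $\pi^{n+1}_t$. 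This eliminates the cross term and gives $D_{\rm KL}(\pi^{n+1}_t\mid\pi^n_t)\le\frac{1}{q\lambda}\bigl(\tau H^n_t+\kappa M_{2,A}(\pi^n_t)^2+2\zeta_t\bigr)$. Together with $\int f|\log f|\,\d\varrho\le D_{\rm KL}(f\varrho\mid\varrho)+2/e$, these two inequalities close the induction in $L^1(\Omega\times[0,T])$. Your pointwise step is still needed, but only to make every term in them finite.
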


\begin{Remark}[From entropy to relative smoothness]
In regime (i), Pinsker's inequality converts total-variation estimates into
relative-entropy estimates. Thus bounded flat derivatives of $b$, $\sigma$,
and $f$, together with the analogous bounds for the derivatives entering
Assumptions~\ref{ass:deriv} and~\ref{ass:measderiv}, provide convenient
sufficient conditions for the control-measure estimates used in the relative
smoothness theorem. Boundedness of the first flat derivatives alone does not,
however, imply relative convexity: the lower bound on the first-order
remainder in Assumption~\ref{ass:relative-semiconvexity} remains a separate
requirement.

The situation is genuinely different in regime (ii). The convex set
$\mathscr C_{\rm qent}(\varrho)$ is chosen to make the entropic mirror step
well posed and stable under the update. Its defining conditions, namely a
finite second moment and finite relative entropy with respect to $\varrho$,
do not imply a quadratic transport-entropy inequality, let alone one with a
constant uniform over $\mathscr C_{\rm qent}(\varrho)$.
% Consequently, the
%relative-smoothness estimates cannot in general be deduced from the choice of
%this domain, even when the coefficient remainders are controlled by $W_2^2$.

%Such a deduction becomes possible only under the additional orbit-wise
%assumption that, uniformly in $(n,t)$,
%\[
%W_2^2(m,\pi_t^n)
%\leq 2C_{\rm T}D_{\rm KL}(m\mid\pi_t^n),
%\qquad m\in\mathscr C_{\rm qent}(\varrho),
%\]
%or under structural conditions implying this estimate, such as suitable
%uniform log-concavity of the iterates. Wasserstein estimates then yield the
%corresponding relative-entropy estimates along the SMD orbit. 

Thus Proposition~\ref{prop:KL}(ii) verifies the well-posedness requirement in
Assumption~\ref{ass:iterates}, but does not by itself place that regime under
the relative-smoothness theorem. As in regime (i), relative convexity still
requires a separate lower bound on the aggregate remainder.
\end{Remark}

\begin{Remark}[Limitations of regime (ii)]
The condition $\lambda>\tau$ excludes the endpoint $\lambda=\tau>0$. At that
endpoint the new density no longer contains the preceding density, and
$D_{\rm KL}(\pi^{n+1}\mid\pi^n)$ may be infinite unless an additional lower
tail bound is imposed on $\pi^n$. The unregularised endpoint $\tau=0$ is
also excluded: the variational estimate still controls the second moment and
$D_{\rm KL}(\pi^{n+1}\mid\pi^n)$, but it does not control
$D_{\rm KL}(\pi^{n+1}\mid\varrho)$ without an additional entropy-stability
condition on the exponential tilt. This endpoint remains covered in the
bounded regime (i) and in the Gaussian affine-LQ regime (iii). Moreover,
regime (ii) does not yield a
finite-dimensional parametrisation as in regime (iii) which avoids the first issue
through the explicit Gaussian structure and provides such a parametrisation.
\end{Remark}

\medskip

\begin{Proposition}[$\chi^2$-divergence]\label{prop:chi2}
Let $\varrho\in\mathcal P(A)$, set $\nu_0=\varrho$ and
$\mathscr C_{2}(\varrho)=\{m\in\mathcal{P}(A):m\ll\varrho,\
\tfrac{\d m}{\d\varrho}\in L^2_\varrho(A)\}$ and
set $\mathscr C=\mathscr C_2(\varrho)$. Define
$h(m)=\int_A\tfrac12\bigl(\tfrac{\d m}{\d\varrho}(a)-1\bigr)^2\varrho(\d a)$, so that
$D_h(m'\mid m)=\tfrac12\|\tfrac{\d m'}{\d\varrho}-\tfrac{\d m}{\d\varrho}\|_{L^2_\varrho(A)}^2$.
Suppose that one of the following two regimes holds.
\begin{enumerate}
\item[(i)] Assumptions~\ref{ass:H1}, \ref{ass:H3} hold and, for each
$\varphi\in\{b,\sigma,f\}$,
\[
  a\longmapsto\sup_{t,x,\mu,m}\bigl|\tfrac{\delta\varphi_t}{\delta m}(x,\mu,m,a)\bigr|\in L^2_\varrho(A).
\]
\item[(ii)] $A=\mathbb R^k$, $\varrho\in\mathcal P_4(\mathbb R^k)$,
Assumptions~\ref{ass:H1}, \ref{ass:H2}, \ref{ass:H3gen},
and~\ref{ass:affine-dynamics} hold, and, for every
$\pi\in\mathcal A_{\mathscr C}$, the stochastic action gradient satisfies,
up to an $a$-independent progressively measurable term,
\begin{equation}\label{eq:chi2-quadratic-action-gradient}
 |\mathscr G_t^\pi(a)|
 \leq \kappa |a|^2+\zeta_t^\pi(1+|a|),
 \qquad a\in\mathbb R^k,
\end{equation}
for a constant $\kappa < \infty$ and a nonnegative progressively measurable
process satisfying $\E\int_0^T|\zeta_t^\pi|^2\,\d t<\infty$.
\end{enumerate}
Then, in both regimes, Assumption~\ref{ass:iterates} holds for every $\pi^0\in\AC$ with
$\E\int_0^T\|\tfrac{\d\pi^0_t}{\d\varrho}\|_{L^2_\varrho(A)}^2\,\d t<\infty$. Moreover, one has
$\E\int_0^T\|\tfrac{\d\pi^n_t}{\d\varrho}\|_{L^2_\varrho(A)}^2\,\d t<\infty$ for all $n\ge 0$.
In particular, regime~(ii) contains the affine-LQ model of
Section~\ref{sec:affine-lq}.
\end{Proposition}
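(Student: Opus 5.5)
The plan is to solve the mirror step explicitly in the $L^2_\varrho$ geometry and then propagate integrability by induction on $n$. Writing $p=\tfrac{\d m}{\d\varrho}$, $p^n_t=\tfrac{\d\pi^n_t}{\d\varrho}$, we have $\tfrac{\delta h}{\delta m}(m,a)=p(a)-1$ up to the normalising constant. The objective in \eqref{eq:MDupdate} is therefore, up to constants,
\[
\int_A\bigl(\mathscr G_t^{\pi^n}(a)+\tau(p_t^n(a)-1)\bigr)(p-p^n_t)(a)\,\varrho(\d a)+\tfrac\lambda2\|p-p^n_t\|^2_{L^2_\varrho},
\]
a strongly convex quadratic functional on the closed convex set $\{p\ge0,\ \int p\,\d\varrho=1\}\subset L^2_\varrho(A)$. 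Provided the linear coefficient $\psi_t^n:=\mathscr G_t^{\pi^n}+\tau(p_t^n-1)$ lies in $L^2_\varrho$, the minimiser is the $L^2_\varrho$-projection $p^{n+1}_t=\Pi\bigl(p^n_t-\lambda^{-1}\psi^n_t\bigr)$ onto the probability simplex. This projection is explicitly of the form $(p^n_t-\lambda^{-1}\psi^n_t-c_t)_+$, with $c_t$ the normalising threshold. The $a$-independent term in $\mathscr G$ plays no role, because $\int(p-p^n_t)\,\d\varrho=0$.

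Existence and uniqueness follow from strong convexity, closedness of the constraint set and Hilbert projection. Measurability of $(\omega,t)\mapsto\pi^{n+1}_t$ follows from the nonexpansive, hence continuous, dependence of the projection on its argument, together with progressive measurability of $\mathscr G_t^{\pi^n}$ (here I use the measurability convention on flat derivatives). Nonexpansiveness, applied with the constant density $1$ as comparison point (which is fixed by the projection), gives
\[
\|p^{n+1}_t\|_{L^2_\varrho}\le 1+\|p^n_t-1\|_{L^2_\varrho}+\lambda^{-1}\|\psi^n_t\|_{L^2_\varrho}\le C\bigl(1+\|p^n_t\|+\|\mathscr G_t^{\pi^n}\|_{L^2_\varrho}\bigr).
\]
Consequently $\E\int_0^T\|p^{n+1}_t\|^2\,\d t<\infty$ as soon as the analogous bound holds for $p^n$ and $\E\int_0^T\|\mathscr G_t^{\pi^n}\|^2_{L^2_\varrho}\,\d t<\infty$. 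This moment bound yields $\mathcal D_h(\pi^{n+1}\mid\nu_0)<\infty$, $\E\int|h(\pi^{n+1}_t)|\,\d t<\infty$ and $\mathcal D_h(\pi^{n+1}\mid\pi^n)<\infty$, so $\pi^{n+1}\in\AC$. The induction then closes from the hypothesis on $\pi^0$.

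The main step, and the expected obstacle, is the $L^2_\varrho$ bound on the stochastic gradient $\mathscr G_t^{\pi^n}=\langle\tfrac{\delta b}{\delta m},Y\rangle+\operatorname{tr}(\tfrac{\delta\sigma}{\delta m}^{\trans}Z)+\tfrac{\delta f}{\delta m}$.

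\emph{Regime (i).} Here $|\mathscr G_t(a)|\le \Phi(a)(1+|Y_t|+|Z_t|)$, with $\Phi\in L^2_\varrho$ the supremum envelope. Hence $\E\int\|\mathscr G_t\|^2\,\d t\le C\|\Phi\|^2\,\E\int(1+|Y_t|^2+|Z_t|^2)\,\d t$, which is finite by Proposition~\ref{prop:wellposed}(1).

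\emph{Regime (ii).} By \eqref{eq:chi2-quadratic-action-gradient},
\[
\|\mathscr G_t\|^2_{L^2_\varrho}\le 2\kappa^2\int|a|^4\,\d\varrho+4(\zeta_t^\pi)^2\int(1+|a|^2)\,\d\varrho,
\]
which is integrable because $\varrho\in\mathcal P_4$ and $\zeta^\pi\in L^2$. The delicate point is ensuring $\pi^{n+1}_t\in\mathcal P_2(\mathbb R^k)$, as required in regime \ref{ass:H3gen}, with $\E\int M_{2,A}(\pi^{n+1}_t)^2\,\d t<\infty$. I would obtain this by Cauchy--Schwarz,
\[
\int|a|^2p^{n+1}_t\,\d\varrho\le\|p^{n+1}_t\|_{L^2_\varrho}\Bigl(\int|a|^4\,\d\varrho\Bigr)^{1/2},
\]
again using $\varrho\in\mathcal P_4$. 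Square integrability of $Y,Z$ for the fixed $\pi^n$ is supplied by Proposition~\ref{prop:wellposed}(2), and this is why the estimate is carried one iterate at a time rather than uniformly.

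Finally, the affine-LQ model belongs to regime (ii). Its gradient $a^{\trans}N_ta+a^{\trans}(C_t^{\trans}Y_t+F_t^*Z_t+2I_tX_t+2\overline I_t\E X_t+2H_t)$ obeys \eqref{eq:chi2-quadratic-action-gradient} with $\kappa=\sup_t|N_t|$ and $\zeta_t=C(1+|Y_t|+|Z_t|+|X_t|+\E|X_t|)$, which is square integrable.
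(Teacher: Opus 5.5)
Your proposal is correct and takes essentially the same route as the paper. The paper also writes the mirror step as the $L^2_\varrho$-projection of $p_t^n-\lambda^{-1}\psi_t^n$ onto the convex set of probability densities, uses nonexpansiveness of this projection to propagate the $L^2_\varrho$ bound by induction, bounds the gradient through the envelope in regime (i) and through the quadratic growth bound with $\varrho\in\mathcal P_4$ in regime (ii), and obtains the second moment by the same Cauchy--Schwarz step. The only cosmetic difference is that you use the constant density $1$ as the fixed point of the projection, whereas the paper uses $q_t^n$; both give the required estimate.
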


\begin{Proposition}[Entropic optimal transport]\label{prop:eot}
Let $(A,d_A)$ be a compact separable metric space, let
$c\in C(A\times A)$ be Lipschitz continuous, fix
$\varrho\in\mathcal P(A)$ and $\varepsilon>0$, and set
$\mathscr C=\mathcal P(A)$. Define
\begin{equation}\label{eq:entropic-transport-regulariser}
h(m):=\min_{\gamma\in\Pi(m,\varrho)}
\left\{
\int_{A\times A}c(a,a')\,\gamma(\d a,\d a')
+\varepsilon D_{\rm KL}(\gamma\mid m\otimes\varrho)
\right\},
\end{equation}
where $\Pi(m,\varrho)$ is the set of couplings with first marginal $m$ and
second marginal $\varrho$, and $D_{\rm KL}$ is understood as $+\infty$ in
the absence of absolute continuity.

Fix $a_0\in A$. For every $m\in\mathcal P(A)$, let
$(\phi[m],\psi[m])\in C(A)\times C(A)$ be the uniquely normalised pair of
Schr\"odinger potentials satisfying $\phi[m](a_0)=0$ given by
\begin{align}
\phi[m](a)
&=-\varepsilon\log\int_A
\exp\left(\frac{\psi[m](a')-c(a,a')}{\varepsilon}\right)
\varrho(\d a'),
\nonumber\\
\psi[m](a')
&=-\varepsilon\log\int_A
\exp\left(\frac{\phi[m](a)-c(a,a')}{\varepsilon}\right)
m(\d a).
\label{eq:schrodinger-potentials}
\end{align}
Then $h$ is finite and convex on $\mathcal P(A)$ and admits the normalised
flat derivative
\begin{equation}\label{eq:eot-flat-derivative}
\frac{\delta h}{\delta m}(m,a)
=\phi[m](a)-\int_A\phi[m](a')\,m(\d a').
\end{equation}

Suppose Assumptions~\ref{ass:H1}--\ref{ass:H2} and either
Assumption~\ref{ass:H3} or Assumption~\ref{ass:H3gen} hold. Assume, for each
$\varphi\in\{b,\sigma,f\}$, that
$\delta\varphi_t/\delta m(x,\mu,m,\cdot)\in C(A)$ and that, for some
$K<\infty$,
\[
\left\|\frac{\delta\varphi_t}{\delta m}
(x,\mu,m,\cdot)\right\|_{C(A)}
\leq K\bigl(1+|x|+M_2(\mu)+M_{2,A}(m)\bigr).
\]
Then Assumption~\ref{ass:iterates} holds for every
$\pi^0\in\mathcal A_{\mathscr C}$.
\end{Proposition}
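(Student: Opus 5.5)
The statement to prove is Proposition~\ref{prop:eot}. It has two parts: the structural properties of the entropic transport regulariser $h$, and the well-posedness of the mirror step.

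\textbf{Structural part.} I would use the dual (Schr\"odinger) formulation of entropic optimal transport on the compact space $A$. Because $c$ is bounded and Lipschitz, the following hold.

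\begin{itemize}
\item The Sinkhorn fixed-point map, written as the log-sum-exp system~\eqref{eq:schrodinger-potentials}, admits a solution in $C(A)\times C(A)$.
\item The solution is unique up to the additive constant, which is fixed by $\phi[m](a_0)=0$.
\item The potentials are $\mathrm{Lip}(c)$-Lipschitz and have oscillation at most $2\|c\|_\infty$, uniformly in $m$.
\end{itemize}

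This gives finiteness of $h$. Indeed, choosing $\gamma=m\otimes\varrho$ shows $h(m)\le\|c\|_\infty$, and $h\geq\min c$. Convexity follows from the dual representation
\[
h(m)=\sup_{\phi,\psi\in C(A)}\Bigl\{\int\phi\,\d m+\int\psi\,\d\varrho-\varepsilon\int\!\!\int\bigl(e^{(\phi\oplus\psi-c)/\varepsilon}-1\bigr)\,\d m\,\d\varrho\Bigr\}.
\]
For fixed $(\phi,\psi)$ the bracket is affine in $m$, and a supremum of affine functionals is convex.

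For the flat derivative, I would use an envelope argument along $m_s=m+s(m'-m)$. Evaluating the dual objective at the optimisers for $m$ gives a lower bound on $h(m_s)-h(m)$. Evaluating it at the optimisers for $m_s$ gives an upper bound. By the first Schr\"odinger equation, the exponential term integrates to $1$ against $\varrho$ for each $a$, so it contributes nothing at first order. The upper and lower bounds are therefore
\[
\int\phi[m]\,\d(m_s-m)\quad\text{and}\quad\int\phi[m_s]\,\d(m_s-m).
\]
Dividing by $s$ and using $\phi[m_s]\to\phi[m]$ uniformly yields~\eqref{eq:eot-flat-derivative} after subtracting the $m$-mean. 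Continuity of $m\mapsto\phi[m]$ in $C(A)$ under weak convergence should follow from the equicontinuity above, Arzel\`a--Ascoli, and uniqueness of the normalised limit.

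\textbf{Mirror step.} At fixed $(\omega,t)$, with $G:=\mathscr G_t^{\pi^n}+\tau\,\delta h/\delta m(\pi_t^n,\cdot)\in C(A)$, I would minimise over $\mathcal P(A)$ the functional
\[
m\mapsto \int G\,\d(m-\pi^n_t)+\lambda D_h(m\mid\pi^n_t).
\]
The function $G$ is continuous by the stated $C(A)$ bound on $\delta\varphi/\delta m$, combined with the fact that $Y_t,Z_t$ are finite a.s.\ by Proposition~\ref{prop:wellposed}.

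Existence comes from the direct method. The set $\mathcal P(A)$ is weakly compact since $A$ is compact. The first term is weakly continuous because $G\in C(A)$. The map $h$ is weakly lower semicontinuous, and I expect it is in fact continuous by the dual representation with uniformly bounded, equicontinuous potentials. The linear part of $D_h$ is continuous in $m$.

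For the measurable selection, I would use either the measurable maximum theorem (the objective is Carath\'eodory in $(\omega,t)$ and $m$, and $\mathcal P(A)$ is compact Polish) or uniqueness from strict convexity of $h$. Strict convexity should hold on $\mathcal P(A)$ because the optimal plan determines $m$ through its first marginal and the entropy is strictly convex. This yields a progressively measurable $\pi^{n+1}$.

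\textbf{Admissibility.} Membership $\pi^{n+1}\in\mathcal A_{\mathscr C}$ and finiteness of $\mathcal D_h(\pi^{n+1}\mid\pi^n)$ are easy, because $h$ is bounded. First, $|h|\le\|c\|_\infty$. Second,
\[
D_h(m'\mid m)\le 2\|c\|_\infty+\operatorname{osc}\phi[m]\le 4\|c\|_\infty,
\]
so every Bregman integral over $[0,T]$ is finite, uniformly in $n$.

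\textbf{Main obstacle.} The hardest step is the rigorous justification of the flat derivative. This requires continuity of $m\mapsto\phi[m]$ in $C(A)$ and the envelope interchange. I would handle it by establishing the uniform Lipschitz and oscillation bounds on the potentials first, and then applying a compactness-plus-uniqueness argument.
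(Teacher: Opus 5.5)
Your proposal is correct and follows the paper's proof. The paper imports the structural facts (finiteness and continuity of $h$, formula~\eqref{eq:eot-flat-derivative}, continuity of $m\mapsto\phi[m]$ into $C(A)$) from \cite[Proposition~2.22]{KERIMKULOV2025104765}, which you re-derive via the dual representation, the envelope argument and Arzel\`a--Ascoli; then, exactly as you do, it obtains a progressively measurable minimiser from the measurable maximum theorem on the compact Polish space $\mathcal P(A)$, and admissibility from the uniform bounds on $|h|$ and $D_h$.

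Discard your alternative selection route via uniqueness, because $h$ need not be strictly convex. For $c\equiv0$ one has $h\equiv0$, and for $\varrho=\delta_b$ one gets the affine map $h(m)=\int_A c(a,b)\,m(\d a)$. The reason is that the entropy is taken relative to $m\otimes\varrho$ and vanishes on the convex set of product couplings. Minimisers of the mirror step can therefore be non-unique, so the measurable maximum theorem is the route that actually works.
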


The entropy statement of Proposition~\ref{prop:KL} guarantees that the SMD iterates are well defined, but
one further point is needed before applying the convergence theorems to the
LQ model. On the unbounded action space $\mathbb R^k$, relative entropy does
not control the mean of two arbitrary probability measures. Thus the global
control-measure assumptions of Theorem~\ref{thm:relsmooth} need not hold on
the whole entropy domain. They do hold in the form actually needed by the
proof along the invariant Gaussian orbit, as recorded next.

\begin{Corollary}[Convergence framework for Gaussian LQ iterates]
\label{cor:gaussian-lq-smd}
Suppose the conditions of Proposition~\ref{prop:KL}(iii) hold. Then there
exists a constant $L_{\rm LQ}<\infty$, independent of the iteration index
and of the comparator, such that, for every admissible nondegenerate Gaussian
control $\pi'$,
\begin{align}
J^\tau(\pi')-J^\tau(\pi^n)
\leq{}&\d J^\tau(\pi^n)(\pi'-\pi^n)
+L_{\rm LQ}\mathcal D_h(\pi'\mid\pi^n).
\label{eq:gaussian-lq-relative-smoothness}
\end{align}
Consequently, if $\lambda\geq L_{\rm LQ}$, the energy-dissipation conclusions
of Theorem~\ref{thm:dissipation} hold for the Gaussian SMD sequence, without
any curvature assumption. If, in addition,
$$
\mathcal Q_{\mathrm{LQ}}(\pi',\pi^n)
\geq-\rho\mathcal D_h(\pi'\mid\pi^n)
$$
along this class of admissible nondegenerate Gaussian controls and
$\tau\geq\rho$, then the convergence conclusions of
Theorem~\ref{thm:main} hold. In particular, under~\eqref{eq:LQ-Riccati-coercivity} and
$\tau>0$, the optimal policy is Gaussian and is an admissible comparator, so these convergence bounds apply directly to $J^\tau(\pi^n)-J^\tau(\pi^\star)$.
\end{Corollary}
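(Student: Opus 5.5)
The plan is to revisit the proof of Theorem~\ref{thm:relsmooth} in regime~(ii) and check that each step survives when the control-measure assumptions are replaced by explicit Gaussian computations. In the affine-LQ setting, $b$ and $\sigma$ depend on $m$ only through $\overline m$. Hence $\mathcal R^b=\mathcal R^\sigma=0$, and the control-measure remainder of $f$ vanishes as well, because $m\mapsto\int a^{\trans}N_ta\,m(\d a)$ and $m\mapsto\overline m$ are affine. By the performance-difference identity of Theorem~\ref{thm:relconvex} together with Proposition~\ref{prop:lq-semiconvexity}, for $\pi=\pi^n$ we then get exactly
\[
J^0(\pi')-J^0(\pi^n)-\d J^0(\pi^n)(\pi'-\pi^n)=\mathcal Q_{\mathrm{LQ}}(\pi',\pi^n).
\]
The entropy term is handled by the exact identity $\tau h(\pi'_t)-\tau h(\pi^n_t)-\tau\int\frac{\delta h}{\delta m}(\pi'_t-\pi^n_t)=\tau D_h(\pi'_t\mid\pi^n_t)$. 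It therefore suffices to prove $\mathcal Q_{\mathrm{LQ}}(\pi',\pi^n)\le C\,\mathcal D_h(\pi'\mid\pi^n)$ with $C$ uniform in $n$ and $\pi'$, and then set $L_{\rm LQ}=\tau+C$.

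To bound $\mathcal Q_{\mathrm{LQ}}$ from above, I use that the state increment $\delta X=X^{\pi'}-X^{\pi^n}$ solves a linear MKV SDE with zero initial condition, driven by $C_t\delta u_t$ in the drift and $F_t\delta u_t$ in the diffusion. A Gronwall argument with bounded coefficients gives
\[
\sup_t\E|\delta X_t|^2\le C_T\,\E\int_0^T|\delta u_t|^2\,\d t.
\]
Bounding $\mathcal Q_{\mathrm{LQ}}$ by Cauchy--Schwarz with the bounded matrices $P,\overline P,Q,\overline Q,I,\overline I$ then yields $|\mathcal Q_{\mathrm{LQ}}|\le C\,\E\int_0^T|\delta u_t|^2\,\d t$. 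Note that the cross terms $\widehat{\delta u}^{\trans}I\widehat{\delta X}$ are also controlled by $\E\int|\delta u|^2$.

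The key step, and the main obstacle, is the mean estimate
\[
|\delta u_t|^2\le 2\,\lambda_{\max}(\Gamma^n_t)\,D_{\rm KL}(\pi'_t\mid\pi^n_t),
\]
which requires a bound on the iterate covariances uniform in $(n,t)$. For Gaussians, $D_{\rm KL}(\mathcal N(u',\Gamma')\mid\mathcal N(u,\Gamma))\ge\frac12\delta u^{\trans}\Gamma^{-1}\delta u$, since the covariance part of the formula is nonnegative. This holds for any Gaussian $\pi'$, and with a transport–entropy inequality relative to the Gaussian $\pi^n_t$ (whose $T_2$ constant is $\lambda_{\max}(\Gamma^n_t)$) it would extend to arbitrary $\pi'$. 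The uniform covariance bound comes from the precision recursion \eqref{eq:abstract-gaussian-update}. With $\theta=1-\tau/\lambda\in[0,1)$ we have
\[
R^{n}_t=\theta^nR^0_t+\frac{2}{\lambda}\sum_{j<n}\theta^jN_t\succeq\min\Bigl(\gamma^{-1},\frac{2\nu}{\tau}\Bigr)I_k,
\]
because $R^n_t$ is a convex combination of $R^0_t\succeq\gamma^{-1}I_k$ and $\frac{2}{\tau}N_t\succeq\frac{2\nu}{\tau}I_k$ (the weights are $\theta^n$ and $1-\theta^n$). Hence $\Gamma^n_t\preceq\overline\Gamma I_k$ with $\overline\Gamma=\max(\gamma,\tau/(2\nu))$, independently of $n$. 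This gives \eqref{eq:gaussian-lq-relative-smoothness} with
\[
L_{\rm LQ}=\tau+2C\,\overline\Gamma .
\]
I also need to check that the performance-difference identity holds in the linear-growth regime for Gaussian $\pi'$. This follows from Assumption~\ref{ass:H3gen}, Proposition~\ref{prop:wellposed}(ii) and the finite second moments of Gaussian controls with $\E\int|u|^2<\infty$.

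For the consequences, I rerun the proofs of Theorems~\ref{thm:dissipation} and~\ref{thm:main}. Those proofs only use the smoothness inequality at base point $\pi^n$ and comparators $\pi^{n+1}$ (Gaussian by Proposition~\ref{prop:KL}(iii)) or the fixed comparator. They also use the three-point inequality of the mirror step, which holds pointwise in $t$, and the lower remainder bound along the orbit, as noted in the Remark on orbit-wise curvature. Finally, under \eqref{eq:LQ-Riccati-coercivity} the optimal control \eqref{eq:LQ-Gaussian-policy-lebesgue} is a nondegenerate Gaussian with affine-feedback mean that is square-integrable. Its KL divergence to each $\pi^n$ is finite because both are nondegenerate Gaussians with bounded covariances and square-integrable means. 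It is therefore an admissible comparator, which yields the stated bounds for $J^\tau(\pi^n)-J^\tau(\pi^\star)$.
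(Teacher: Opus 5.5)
Your proposal is correct and follows the paper's proof essentially step for step. Both arguments rest on the exact identity $J^\tau(\pi')-J^\tau(\pi^n)-\d J^\tau(\pi^n)(\pi'-\pi^n)=\mathcal Q_{\mathrm{LQ}}(\pi',\pi^n)+\tau\mathcal D_h(\pi'\mid\pi^n)$, the linear-SDE stability bound of $\delta X$ by $\E\int_0^T|\delta u_t|^2\,\d t$, and the Gaussian mean--KL estimate with the $n$-uniform covariance bound from the precision recursion, which yield the same constant $L_{\rm LQ}=\tau+2C\overline\Gamma$. The one small slip is that your convex-combination argument for $R^n_t$ assumes $\tau>0$. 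For $\tau=0$, however, $R^n_t=R^0_t+\tfrac{2n}{\lambda}N_t\succeq\gamma^{-1}I_k$ is immediate, so the uniform bound and the conclusion are unaffected.
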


%\begin{Remark}
%When $\tau=0$ on $A=\mathbb R^k$, the optimal relaxed LQ control is typically
%a Dirac measure and hence has infinite relative entropy with respect to a
%Gaussian iterate. The dissipation result and the comparison estimate of
%Theorem~\ref{thm:main} remain valid for every fixed Gaussian comparator, but
%the stated $O(1/n)$ estimate cannot be applied directly to the singular
%optimizer. Approximating the latter by Gaussians of vanishing covariance
%generally introduces an additional logarithmic term. This obstruction does
%not occur for $\tau>0$, or on a finite action grid with a full-support
%reference measure.
%\end{Remark}

\newpage
\section{Numerical Experiments}\label{sec:numerical-experiments}

Throughout this section, both the state space and the action space are
one-dimensional. We report four scalar LQ experiments, including the benchmark
with financially motivated negative curvature given by Example \ref{ex:procyclical-systemic-risk}. We first describe the Gaussian
SMD implementation and the subsequent evaluation protocol, and then
present each example by separating its modelling interpretation, its exact
solution, and the observed or theoretically predicted convergence behaviour.

\subsection{Discretisation and SMD implementation}
\label{sec:numerical-implementation}

We discretize the interval $[0,T]$ using a regular time grid
$t_i=i\Delta$, $i=0,\ldots,N_{\rm time}$, with
$\Delta=T/N_{\rm time}$, and let
$j=1,\ldots,M$ index the simulated trajectories. At SMD iteration $n$, the
forward--backward system associated with the current policy is approximated
using the Euler-Maruyama scheme. All experiments reported below use a Gaussian
initial policy and exploit preservation of Gaussianity to evolve only its
affine mean and variance. Thus this subsection describes how the SMD iterates
$\pi^n$ are learned from simulated forward--backward trajectories, whereas
Section~\ref{sec:numerical-objectives} describes the deterministic evaluation
of the cost of the policy obtained at each iteration. For completeness,
Remark~\ref{rem:action-grid}
records an alternative action-grid implementation that is not used in the
reported experiments.
The Brownian increments $\Delta W_i^{(j)}$ are independent
$\mathcal N(0,\Delta)$ random variables across $i$ and $j$.

\paragraph{Forward simulation.}
For the LQ coefficients
of Section~\ref{sec:affine-lq},
\begin{align}
X_{i+1}^{n,(j)}={}&X_i^{n,(j)}
+\bigl(B_iX_i^{n,(j)}+\overline B_i\overline X_i^n
+C_iu_i^{n,(j)}\bigr)\Delta
\nonumber\\
&+\bigl(\gamma_i+D_iX_i^{n,(j)}+\overline D_i\overline X_i^n
+F_iu_i^{n,(j)}\bigr)\Delta W_i^{(j)}, \quad i\geq 0,
\label{eq:numerical-forward}
\end{align}
\noindent where we set
\[
 u_i^{n,(j)}:=\int_{\mathbb R}a\,\pi_i^{n,(j)}(\d a),
 \qquad
 \overline X_i^n:=\frac1M\sum_{j=1}^M X_i^{n,(j)},
 \qquad
 \overline u_i^n:=\frac1M\sum_{j=1}^M u_i^{n,(j)}.
\]

It is important here that the coefficients are evaluated at the policy mean
$u_i^{n,(j)}$, rather than at an action sampled from the policy: this is the
measure-valued dynamics studied throughout the paper.  The second moment of
the policy still enters the running cost through
$\int a^2\pi_i^{n,(j)}(\d a)$.

\paragraph{Backward regression.}
The terminal condition is
\[
Y_{N_{\rm time}}^{n,(j)}
=2P X_{N_{\rm time}}^{n,(j)}
+2\overline P\,\overline X_{N_{\rm time}}^n+2L.
\]
Starting from $i=N_{\rm time}-1$, least-squares regressions on the affine basis
$\{1,x\}$ approximate
\[
\widehat E_i^{n,(j)}
=\widehat{\mathbb E}
 [Y_{i+1}^n\mid X_i^n=X_i^{n,(j)}],
\qquad
Z_i^{n,(j)}
=\frac1\Delta\widehat{\mathbb E}
 [Y_{i+1}^n\Delta W_i\mid X_i^n=X_i^{n,(j)}].
\]
This basis is compatible with the affine form of the exact LQ adjoint
process. Set
\[
\overline{\widehat E}_i^n:=\frac1M\sum_{j=1}^M\widehat E_i^{n,(j)},
\qquad
\overline Z_i^n:=\frac1M\sum_{j=1}^M Z_i^{n,(j)}.
\]
The explicit backward Euler step is
\begin{align}
Y_i^{n,(j)}={}&\widehat E_i^{n,(j)}+\Delta\Bigl(
B_i^{\trans}\widehat E_i^{n,(j)}
+\overline B_i^{\trans}\overline{\widehat E}_i^n
+D_i^{*}Z_i^{n,(j)}
+\overline D_i^{*}\overline Z_i^n
\nonumber\\
&\quad
+2Q_iX_i^{n,(j)}+2\overline Q_i\overline X_i^n
+2I_i^{\trans}u_i^{n,(j)}
+2\overline I_i^{\trans}\overline u_i^n+2M_i
\Bigr).
\label{eq:numerical-backward}
\end{align}
Thus the first two examples require only the regression of $Y$, whereas the
control-dependent-volatility example also tests the numerical estimation of
$Z$.

\paragraph{Action-dependent energy.}
For a state trajectory $j$ and time index $i$, the action-dependent part of
the Hamiltonian is
\begin{align*}
\mathcal E_i^{n,(j)}(a)
&=a^{\trans}N_ia+a^{\trans}\Xi_i^{n,(j)},\\
\Xi_i^{n,(j)}
&=C_i^{\trans}Y_i^{n,(j)}+F_i^{*}Z_i^{n,(j)}
+2I_iX_i^{n,(j)}+2\overline I_i\overline X_i^n+2H_i.
\end{align*}

\paragraph{Gaussian implementation used in the experiments.}
For the experiments below, we use negative differential entropy, with
Lebesgue measure as the underlying measure, and a nondegenerate Gaussian
initialisation. Since $\mathcal E_i^{n,(j)}$ is quadratic, the SMD iterates
remain Gaussian. Writing
\[
\pi_i^{n,(j)}=\mathcal N(\mu_i^{n,(j)},\Gamma_i^n),
\qquad
\mathsf R_i^n=(\Gamma_i^n)^{-1},
\qquad
\eta_i^{n,(j)}=\mathsf R_i^n\mu_i^{n,(j)},
\]
the moments used in the forward scheme and running cost are
\[
 u_i^{n,(j)}=\mu_i^{n,(j)},
 \qquad
 \int_{\mathbb R}a^2\pi_i^{n,(j)}(\d a)
 =|\mu_i^{n,(j)}|^2+\Gamma_i^n.
\]
The SMD step reduces to
\begin{align}
\mathsf R_i^{n+1}
&=\left(1-\frac{\tau}{\lambda}\right)\mathsf R_i^n
+\frac{2}{\lambda}N_i,
&
\Gamma_i^{n+1}&=(\mathsf R_i^{n+1})^{-1},
\label{eq:numerical-gaussian-precision}\\
\eta_i^{n+1,(j)}
&=\left(1-\frac{\tau}{\lambda}\right)\eta_i^{n,(j)}
-\frac1\lambda\Xi_i^{n,(j)},
&
\mu_i^{n+1,(j)}&=\Gamma_i^{n+1}\eta_i^{n+1,(j)}.
\label{eq:numerical-gaussian-mean}
\end{align}
This finite-dimensional recursion avoids an additional action discretisation
and makes it possible to compare the learned Gaussian parameters directly
with their exact values. In the implementation, the regressions of $Y$ and
$Z$ on $\{1,x\}$ make $\Xi_i^n$ affine in the state. The update is therefore
performed directly on the common intercept and slope of
$x\mapsto\mu_i^n(x)$, rather than independently at each simulated particle.
More precisely, write
\[
 \mu_i^n(x)=\alpha_i^n+k_i^nx,\qquad
 \Xi_i^n(x)=\xi_{i,0}^n+\xi_{i,1}^nx.
\]
Since $\eta_i^n(x)=\mathsf R_i^n(\alpha_i^n+k_i^nx)$, identifying the
constant and linear coefficients in
\eqref{eq:numerical-gaussian-mean} gives
\begin{align}
\alpha_i^{n+1}
&=\Gamma_i^{n+1}\left[
 \left(1-\frac{\tau}{\lambda}\right)\mathsf R_i^n\alpha_i^n
 -\frac{\xi_{i,0}^n}{\lambda}\right],
\nonumber\\
k_i^{n+1}
&=\Gamma_i^{n+1}\left[
 \left(1-\frac{\tau}{\lambda}\right)\mathsf R_i^nk_i^n
 -\frac{\xi_{i,1}^n}{\lambda}\right].
\label{eq:numerical-affine-mean-update}
\end{align}
Thus \eqref{eq:numerical-gaussian-mean} and
\eqref{eq:numerical-affine-mean-update} are the same update expressed,
respectively, particlewise and in the affine parametrisation used in the code.

%For a scalar Gaussian $\mathcal N(\mu,\Gamma)$, the entropy contribution is
%$h=-\frac12\log(2\pi e\Gamma)$.

\begin{Remark}[Alternative action-grid implementation]
\label{rem:action-grid}
Instead of exploiting Gaussian invariance, one may discretise the action
space. Let $(a_\ell)_{1\leq \ell\leq N_a}$ be a finite action grid, write
$\pi_{i,\ell}^{n,(j)}$ for the mass assigned to $a_\ell$, and let $\varrho_\ell>0$
denote the reference weights. The moments entering the forward scheme and
the running cost are
\[
 u_i^{n,(j)}=\sum_{\ell=1}^{N_a} a_\ell\pi_{i,\ell}^{n,(j)},
 \qquad
 \int_{\mathbb R}a^2\pi_i^{n,(j)}(\d a)
 =\sum_{\ell=1}^{N_a} a_\ell^2\pi_{i,\ell}^{n,(j)}.
\]
After solving the discretised forward--backward system with these moments,
the entropy mirror step updates the weights according to
\begin{equation}
 \pi_{i,\ell}^{n+1,(j)}
 \propto
 \bigl(\pi_{i,\ell}^{n,(j)}\bigr)^{1-\tau/\lambda}
 \varrho_\ell^{\tau/\lambda}
 \exp\!\left(-\frac{\mathcal E_i^{n,(j)}(a_\ell)}{\lambda}\right).
 \label{eq:numerical-grid-update}
\end{equation}
with the proportionality constant chosen so that the weights sum to one. The
mean used in the next Euler-Maruyama step is therefore
$u_i^{n+1,(j)}=\sum_\ell a_\ell\pi_{i,\ell}^{n+1,(j)}$.
This alternative introduces an additional action-discretisation error and is
not used in the experiments below.
\end{Remark}

\subsection{Evaluation protocol and cost gap}
\label{sec:numerical-objectives}

The purpose of these experiments is not to establish a new numerical solver
for LQ control problems, which can already be solved through Riccati
equations. Rather, the LQ class of Section~\ref{sec:affine-lq} provides a
controlled test environment for SMD: the optimal randomized policy and its
value are available in closed form, while the models retain mean-field
interactions, state-control cross terms, and control-dependent volatility.
The experiments can therefore separate the error in evaluating a given
policy and the bias of the time-discretised benchmark from the error affecting
the learned policy. The latter still contains the regression and sampling
errors incurred when estimating the action gradient.

More specifically, we examine three questions. First, do the SMD iterates
recover the affine mean and the covariance of the optimal Gaussian policy?
Second, does the cost gap exhibit the two regimes predicted by
Theorem~\ref{thm:main}: geometric decay when $\tau-\rho>0$ and sublinear decay
at the threshold $\tau=\rho$? Third, how is the numerical behaviour affected
when the policy enters the volatility and the update consequently depends on
the martingale integrand $Z$?

All examples have horizon $T=1$ and use the negative differential entropy
\[
 h(m)=\int_{\mathbb R}p(a)\log p(a)\,\d a,
 \qquad m(\d a)=p(a)\,\d a.
\]
The mirror parameter is $\lambda=1$. The regularised experiments use
$\tau=0.2$; the comparison regime uses $\tau=0$, except in the procyclical
model, where it uses the critical value $\tau=\rho_{\rm pc}$. Write the
learned Gaussian policy at time $t_i$ as
\(
 \pi_i^n(x)=\mathcal N(\alpha_i^n+k_i^n x,\Gamma_i^n)
\), and let $m_i^n$ and $v_i^n$ denote the mean and variance of its Euler
state. With $\overline u_i^n=\alpha_i^n+k_i^nm_i^n$, these moments are
propagated deterministically according to
\begin{align*}
m_{i+1}^n
&=\bigl[1+\Delta(B_i+\overline B_i)\bigr]m_i^n
  +\Delta C_i\overline u_i^n,\\
v_{i+1}^n
&=\bigl[1+\Delta(B_i+C_ik_i^n)\bigr]^2v_i^n
+\Delta\left[
 \bigl(\gamma_i+(D_i+\overline D_i)m_i^n+F_i\overline u_i^n\bigr)^2
 +(D_i+F_ik_i^n)^2v_i^n\right].
\end{align*}
The convergence figures report the Euler-grid gap
\[
 G_{\Delta,n}^\tau
 :=J_\Delta^\tau(\pi^n)-J_{\Delta,\star}^\tau.
\]
Consequently, no additional simulation is used to evaluate the policy cost:
\begin{equation}\label{eq:exact-euler-policy-cost}
 J_\Delta^\tau(\pi^n)
 =\Delta\sum_{i=0}^{N_{\rm time}-1}\mathcal F_i^n+\mathcal G_N^n,
\end{equation}
where
\begin{align*}
\mathcal F_i^n={}&(Q_i+\overline Q_i)|m_i^n|^2+Q_iv_i^n
+N_i\bigl(|\overline u_i^n|^2+|k_i^n|^2v_i^n+\Gamma_i^n\bigr)
+2I_i(m_i^n\overline u_i^n+k_i^nv_i^n)\\
&+2\overline I_i m_i^n\overline u_i^n+2M_im_i^n
+2H_i\overline u_i^n-\frac{\tau}{2}\log(2\pi e\Gamma_i^n),\\
\mathcal G_N^n={}&(P+\overline P)|m_N^n|^2+Pv_N^n+2Lm_N^n.
\end{align*}
The benchmark $J_{\Delta,\star}^\tau$ is obtained independently by dynamic
programming for the same Euler model. Inserting a quadratic value function
into the discrete Bellman recursion and minimizing the resulting one-step
quadratic Hamiltonian yields the exact affine Gaussian policy on the grid,
which is then evaluated through \eqref{eq:exact-euler-policy-cost}. Thus the
gap contains neither Monte Carlo noise from cost evaluation nor the bias
arising from comparison with the continuous-time optimum. It does, however,
retain the regression and finite-sample errors entering the learned policy.

For $\tau>0$, the discrete optimum is nondegenerate. At $\tau=0$, it is the
Dirac mass at the minimizing affine action; in the code this comparator is
represented by its mean and zero variance, rather than by a density. By
contrast, every finite SMD iterate remains nondegenerate: in the present
scalar setting, $\mathsf R_i^{n+1}=\mathsf R_i^n+2N_i/\lambda$, so that
$\Gamma_i^n\downarrow0$. The unregularised curves therefore describe this
singular Gaussian limit. The dashed curves display the profiles
$C(1-(\tau-\rho)/\lambda)^n$ and $C/n$ and are normalised reference slopes,
not finite-sample bounds. In particular, when $\tau=0$ the Gaussian iterates
cannot be compared directly in relative entropy with the singular optimal
relaxed control; see the remark following
Corollary~\ref{cor:gaussian-lq-smd}. The unregularised plots therefore provide
an empirical limiting benchmark, rather than a direct application of the
$O(1/n)$ conclusion to $\pi^\star$. The same initial policy and numerical
discretisation are used when comparing the two values of $\tau$ within each
example.

\paragraph{Numerical protocol.}
We use $T=1$, $\lambda=1$, an initial Gaussian policy with
$\mu^0=0$, $\Gamma^0=1$, and an Euler grid of
$N_{\rm time}=50$ steps (100 in the procyclical experiment).
The publication experiments use $M_{\rm train}=20,000$ independent
forward--backward trajectories (40,000 for controlled volatility) and 30
independent repetitions. We perform 35 SMD updates at $\tau=0.2$ and
80 updates at $\tau=0$ or $\tau=\rho_{\rm pc}$.
The continuous-time Riccati value is used only to quantify the deterministic
time-discretisation bias and does not enter the displayed gap.
The dashed $C(1-(\tau-\rho)/\lambda)^n$ and $C/n$ profiles are
normalised reference slopes and are not numerical upper bounds.

\subsection{Mean-field systemic risk}

\paragraph{Model and interpretation.}
The first example is the interbank lending model introduced by
Carmona, Fouque, and Sun~\cite{carfousun15}.  The state represents
the log-monetary reserve of a representative bank.  Mean reversion toward the
average reserve models interbank borrowing and lending, while the control
changes the bank's borrowing rate:
\begin{align*}
\d X_t
&=\bigl[\beta(\overline X_t-X_t)
+\overline a(\pi_t)\bigr]\d t+\gamma\,\d W_t,\\
f(x,\mu,\pi)
&=Q(x^2-\overline\mu^2)
+\frac12\int_{\mathbb R}a^2\pi(\d a)
+2I(x-\overline\mu)\overline a(\pi),\\
g(x,\mu)&=P(x^2-\overline\mu^2).
\end{align*}
Here $\overline a(\pi)=\int a\pi(\d a)$.  In the notation of
Section~\ref{sec:affine-lq}, this corresponds to
\[
B=-\beta,\quad \overline B=\beta,\quad C=1,\quad
D=\overline D=F=0,
\]
\[
\overline Q=-Q,\quad N=\frac12,\quad
\overline I=-I,\quad M=H=L=0,
\qquad \overline P=-P.
\]
We take
\[
\beta=0.6,\qquad I=0.4,\qquad Q=P=\gamma=1,
\qquad X_0\sim\mathcal N(0,1).
\]
The energy used in the mirror step is
\[
\mathcal E_t(a)=\frac12a^2+a\ell_t,
\qquad
\ell_t=Y_t+2I(X_t-\overline X_t).
\]

\paragraph{Exact benchmark and curvature condition.}
Set
\[
\kappa:=\sqrt{(\beta+2I)^2+2Q-4I^2},
\qquad s_t:=\kappa(T-t).
\]
The regularised optimal policy is
\[
\pi_t^\star
=\mathcal N\bigl(\varphi(t)(x-\overline x),\tau\bigr),
\qquad \varphi(t)=-2(K(t)+I),
\]
where
\[
K(t)=-\frac12\left[
\beta+2I
-\kappa\,
\frac{\kappa\sinh(s_t)+(\beta+2I+2P)\cosh(s_t)}
{\kappa\cosh(s_t)+(\beta+2I+2P)\sinh(s_t)}
\right].
\]
The scalar term in the value function is
\[
R(t)=\frac{\gamma^2}{2}
\log\!\left(
\cosh(s_t)+\frac{\beta+2I+2P}{\kappa}\sinh(s_t)
\right)
-\frac{\gamma^2}{2}(\beta+2I)(T-t)
-\frac{\tau(T-t)}2\log(2\pi\tau).
\]
In particular, $\varphi(0)=-1.251$ and
$\varphi(T/2)=-1.409$.

This example also satisfies Assumption~\ref{ass:relative-semiconvexity} with
$\rho=0$.  Indeed, $\overline P=-P$, $\overline Q=-Q$, and
$\overline I=-I$, so all mean terms in
$\mathcal Q_{\mathrm{LQ}}$ vanish.  If
$\xi_t=\widehat{\delta X}_t$ and
$v_t=\widehat{\delta u}_t$, then
\[
\d\xi_t=(-\beta\xi_t+v_t)\,\d t,
\qquad \xi_0=0,
\]
and an integration by parts gives
\[
\mathcal Q_{\mathrm{LQ}}(\pi',\pi)
=(P+I)\mathbb E|\xi_T|^2
+(Q+2\beta I)\int_0^T\mathbb E|\xi_t|^2\,\d t.
\]
For the chosen parameters the two coefficients are $1.4$ and $1.48$.

\paragraph{Results.}
Figure~\ref{fig:rates-systemic} compares the two regularisation regimes.  For
$\tau=0.2$, the cost gap decreases geometrically until it reaches a floor of
approximately $6\times10^{-5}$. Since the ordinate is the exact Euler-grid
gap, this floor is not caused by Monte Carlo evaluation of the cost or by
comparison with the continuous-time optimum; it reflects the finite-sample
and regression errors in the policy update. A separate refinement experiment
confirms that this residual decreases as either the number of training paths
or the number of time steps is increased. For
$\tau=0$, the log-log
plot is consistent with a $1/n$ slope over the displayed iterations.  Thus
the experiment exhibits the qualitative transition suggested by the
regularised convergence theory and by its singular limit at $\tau=0$; the
dashed curves are not finite-sample error bounds.

\begin{figure}[htbp]
\centering
\includegraphics[width=\textwidth]{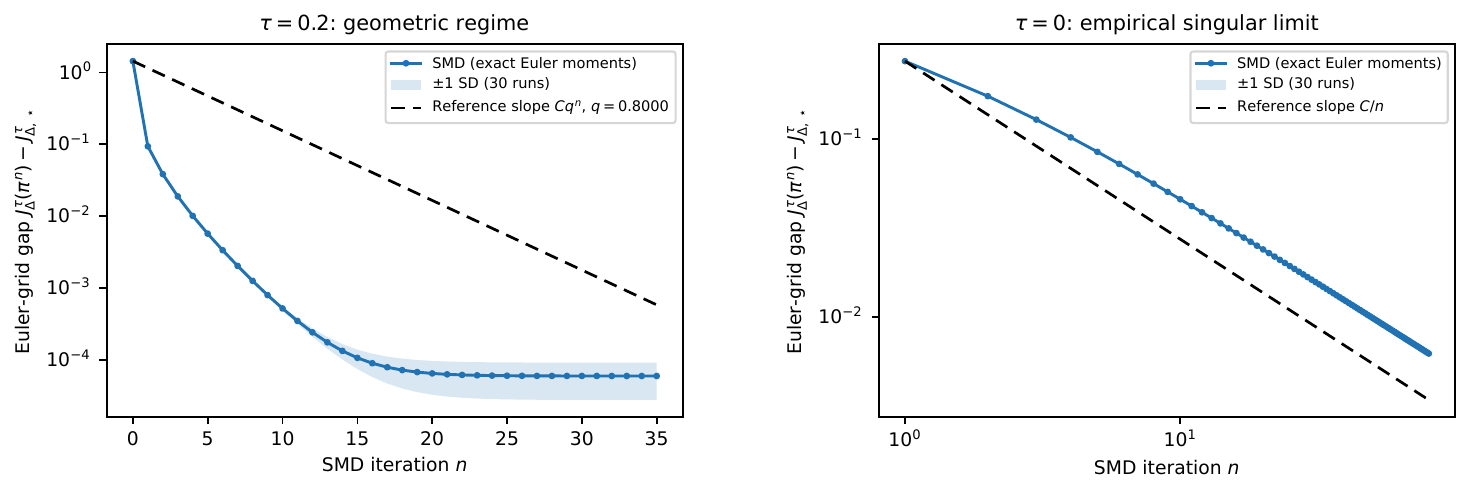}
\caption{Mean-field systemic risk. Euler-grid gap
$G_{\Delta,n}^\tau=J_\Delta^\tau(\pi^n)-J_{\Delta,\star}^\tau$ for
$\tau=0.2$ (left) and $\tau=0$ (right), averaged over 30 independent runs;
the shaded region represents one standard deviation. The dashed curves are
normalised geometric and $1/n$ reference profiles.}
\label{fig:rates-systemic}
\end{figure}

\FloatBarrier

\subsection{Procyclical systemic risk: a negative-curvature benchmark}

This variant of the preceding experiment is designed to test the
convexification mechanism of Theorem~\ref{thm:main}. It keeps the same
dynamics and costs but replaces the stabilising liquidity incentive by the
procyclical coefficient
\[
T=1,\qquad \beta=0.6,\qquad I=-0.9,\qquad
Q=P=\gamma=1,\qquad N=\frac12.
\]
As explained in Example~\ref{ex:procyclical-systemic-risk}, $I<0$ models
balance-sheet expansion by relatively well-capitalised banks and forced
deleveraging by weaker banks. The curvature identity becomes
\[
\mathcal Q_{\mathrm{LQ}}(\pi',\pi)
=0.1\,\E|\xi_T|^2
-0.08\,\E\int_0^T|\xi_t|^2\,\d t.
\]
With $\lambda=1$, Gaussian initial covariance $\Gamma^0=1$, and
$\tau\leq1$, the covariance recursion gives
$\overline\Gamma=1$. Consequently,
\[
\rho_{\rm pc}
=0.16\left(\frac{1-e^{-0.6}}{0.6}\right)^2
\simeq0.0905.
\]
Figure~\ref{fig:rates-pro-systemic} compares the threshold value
$\tau=\rho_{\rm pc}$ with $\tau=0.2$. In the latter case the gap initially
decreases geometrically, consistently with the factor
$1-(0.2-\rho_{\rm pc})/\lambda\simeq0.8905$, before reaching the numerical
floor generated by the estimated policy update. At the critical value, the
gap is compatible with the predicted $1/n$ reference profile and in fact
decreases faster over part of the displayed range. This does not identify an
exact asymptotic rate, but it clearly distinguishes the critical regime from
the geometric one and confirms that convergence persists despite the negative
curvature of the unregularised objective.

\begin{figure}[htbp]
\centering
\includegraphics[width=\textwidth]{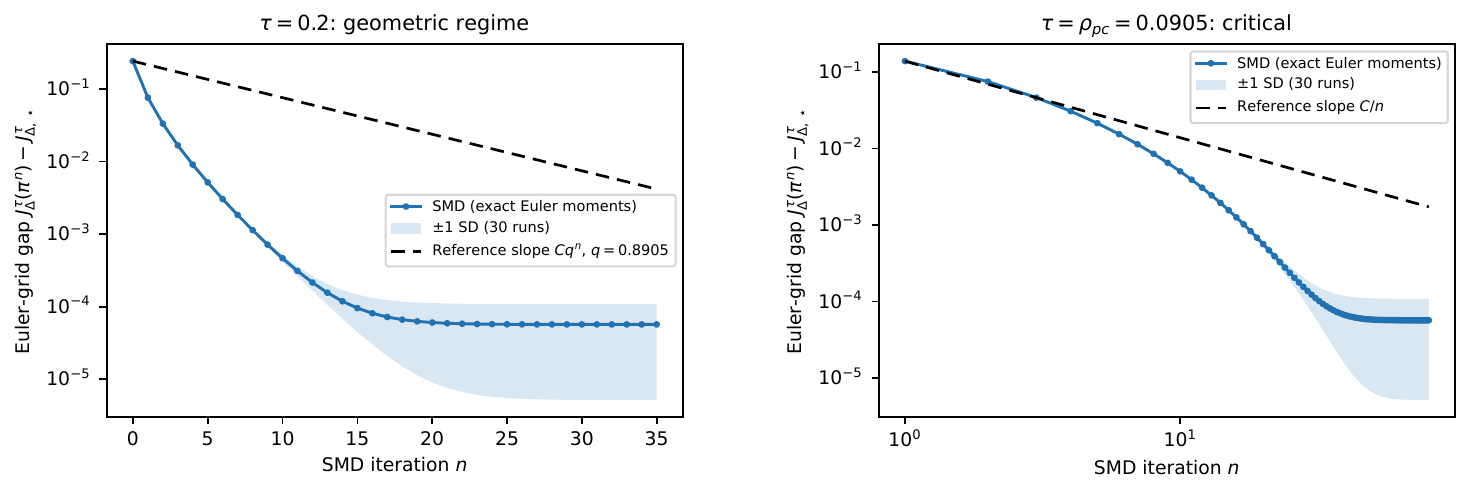}
\caption{Procyclical systemic risk. Euler-grid gap
$G_{\Delta,n}^\tau$ for $\tau=0.2$ (left) and
$\tau=\rho_{\rm pc}\simeq0.0905$ (right), averaged over 30 independent runs;
the shaded region represents one standard deviation. The dashed curves are
normalised geometric and $1/n$ reference profiles.}
\label{fig:rates-pro-systemic}
\end{figure}
\FloatBarrier

\subsection{Optimal trading}

\paragraph{Model and interpretation.}
This example is a stylised mean-field formulation of the classical
optimal-execution trade-off between transaction costs and inventory risk
introduced by Almgren and Chriss~\cite{almgrenchriss01}.  The inventory is
denoted by $X_t$.  The mean of the randomized control is the trading rate, the
quadratic term penalises fast execution, and the terminal variance penalises
dispersion of the remaining inventory:
\begin{align*}
\d X_t&=\overline a(\pi_t)\,\d t+\gamma\,\d W_t,\\
f(x,\mu,\pi)
&=\int_{\mathbb R}a^2\pi(\d a)+2H\overline a(\pi),\\
g(x,\mu)&=P(x^2-\overline\mu^2).
\end{align*}
In the notation of Section~\ref{sec:affine-lq}, this means
\[
C=1,\qquad N=1,\qquad \overline P=-P,
\]
with constant diffusion $\gamma$, affine action-cost coefficient $H$, and
$B=\overline B=D=\overline D=F=0$; all the remaining running-cost
coefficients vanish.
We use
\[
P=3,\qquad H=2,\qquad \gamma=1,
\qquad X_0\sim\mathcal N(1,1).
\]
The corresponding action energy is
\[
\mathcal E_t(a)=a^2+a(Y_t+2H).
\]

\paragraph{Exact benchmark and curvature condition.}
The exact coefficients and optimal policy are
\begin{align*}
K(t)&=\frac{P}{1+P(T-t)},\\
R(t)&=\gamma^2\log(1+P(T-t))
-\left(H^2+\frac\tau2\log(\pi\tau)\right)(T-t),\\
\pi_t^\star
&=\mathcal N\bigl(-K(t)(x-\overline x)-H,\tau/2\bigr).
\end{align*}
Thus the optimal feedback coefficient is $-0.750$ at time $0$ and $-1.200$
at time $T/2$, while the intercept is the constant $-H=-2$.  The variance
$\tau/2$ offers a direct check of the covariance recursion
\eqref{eq:numerical-gaussian-precision}.

Here $\overline P=-P$ and there are no running state or state--control cross
terms.  Proposition~\ref{prop:lq-semiconvexity} gives
\[
\mathcal Q_{\mathrm{LQ}}(\pi',\pi)
=P\mathbb E|\widehat{\delta X}_T|^2\geq0.
\]
The action cost and the linear execution term are affine in the relaxed
control measure, and hence have zero first-order remainder.  Assumption
\ref{ass:relative-semiconvexity} therefore holds with $\rho=0$.

\paragraph{Results.}
The left panel of Figure~\ref{fig:rates-trading} again exhibits geometric
decay for $\tau=0.2$, with small variability across the 30 runs. In the
unregularised case, the gap decreases throughout the displayed range and is
compatible with the sublinear conclusion, although the finite-iteration
curve should not be interpreted as identifying the exact asymptotic rate.
In addition to the objective value, the Gaussian
recursion recovers the structural features of the exact control: covariance
$\tau/2$, constant intercept $-H$, and increasing liquidation intensity
$K(t)$ as the terminal date approaches.

\begin{figure}[htbp]
\centering
\includegraphics[width=0.98\textwidth]{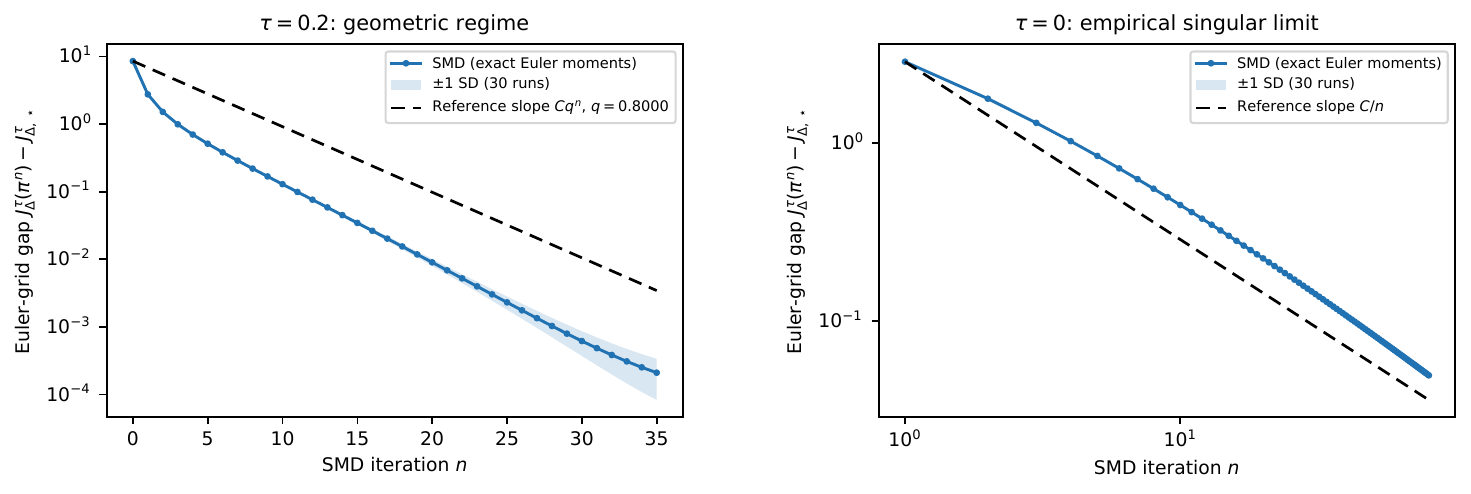}
\caption{Optimal trading. Euler-grid gap $G_{\Delta,n}^\tau$ in the
regularised geometric regime (left) and in the unregularised empirical
singular limit (right), averaged over 30 independent runs. The shaded region
represents one standard deviation and the dashed curves are normalised
reference slopes.}
\label{fig:rates-trading}
\end{figure}

\FloatBarrier

\subsection{Control-dependent volatility}

\paragraph{Model and interpretation.}
The final example belongs to the classical family of stochastic LQ problems
in which the control enters both the drift and the diffusion; see, in
particular, Ait Rami, Moore, and Zhou~\cite{aitramimoorezhou02} for the
associated generalized Riccati structure.  Here we adapt this setting to
randomized controls.  The mean control affects both the drift and the
volatility,
\begin{align*}
\d X_t
&=(BX_t+C\overline a(\pi_t))\,\d t
+(\gamma+F\overline a(\pi_t))\,\d W_t,\\
f(x,\pi)&=Qx^2+N\int_{\mathbb R}a^2\pi(\d a),\\
g(x)&=Px^2.
\end{align*}
The coefficients are
\[
\begin{gathered}
B=0.3,\qquad C=\gamma=1,\qquad F=0.5,\\
Q=N=0.5,\qquad P=1,\qquad
X_0\sim\mathcal N(1,0.25).
\end{gathered}
\]
The mirror update now uses
\[
\mathcal E_t(a)=Na^2+a(CY_t+FZ_t).
\]
Consequently, errors in the regression estimate of $Z$ directly perturb the
policy update; this makes the example a more demanding test of the
forward-backward implementation.

\paragraph{Exact benchmark and curvature condition.}
Let $K_t$ solve
\[
\dot K_t+2BK_t+Q-\frac{(CK_t)^2}{N+F^2K_t}=0,
\qquad K_T=P,
\]
and write $S_t=N+F^2K_t$, consistently with the notation of
Section~\ref{sec:affine-lq}.  The remaining affine coefficient is
determined by
\[
\dot r_t+Br_t+CK_tc_t=0,\qquad r_T=0,
\qquad
c_t=-\frac{Cr_t+FK_t\gamma}{S_t}.
\]
The optimal randomized policy is
\[
\pi_t^\star
=\mathcal N(k_tx+c_t,\tau/(2N)),
\qquad
k_t=-\frac{CK_t}{S_t}.
\]
Numerically, $k_0=-1.224$, $k_{T/2}=-1.260$,
$c_0=-0.094$, and $c_{T/2}=-0.298$.

There are no mean-field cost terms or state--control cross terms.  The
coefficients above are already those of the LQ model in
Section~\ref{sec:affine-lq}.  Consequently,
\[
\mathcal Q_{\mathrm{LQ}}(\pi',\pi)
=P\mathbb E|\delta X_T|^2
+Q\int_0^T\mathbb E|\delta X_t|^2\,\d t\geq0.
\]
The control-dependent diffusion contributes no remainder because it is
affine in the policy mean.  Thus Assumption
\ref{ass:relative-semiconvexity} again holds with $\rho=0$.

\paragraph{Results.}
Figure~\ref{fig:rates-ctrlvol} shows the same qualitative transition between
geometric and sublinear convergence.  The regularised curve has a longer
initial transient and reaches a higher floor than in the systemic-risk
example. This is consistent with the additional statistical error carried by
the estimate of $Z$ in the coefficient $CY+FZ$. In the final backward sweep,
the affine $Y$ regressions have coefficients of determination between about
$0.96$ and $0.98$. If the fitted adjoint is
$Y_{i+1}=a_{i+1}+b_{i+1}X_{i+1}$, the corresponding LQ oracle is
\[
 Z_i^{\rm LQ}(X_i)
 =b_{i+1}\bigl(\gamma_i+D_iX_i+\overline D_im_i+F_iu_i\bigr).
\]
The relative root-mean-square error
\[
 \left(
 \frac{\sum_j|Z_i^{n,(j)}-Z_i^{\rm LQ}(X_i^{n,(j)})|^2}
      {\sum_j|Z_i^{\rm LQ}(X_i^{n,(j)})|^2}
 \right)^{1/2}
\]
is between $5\%$ and $10\%$ over the reported time indices. These diagnostics
support this interpretation without identifying the dominant contribution to
the error floor. Once the policy-estimation error dominates, further SMD
iterations no longer reduce the measured gap.

\begin{figure}[htbp]
\centering
\includegraphics[width=\textwidth]{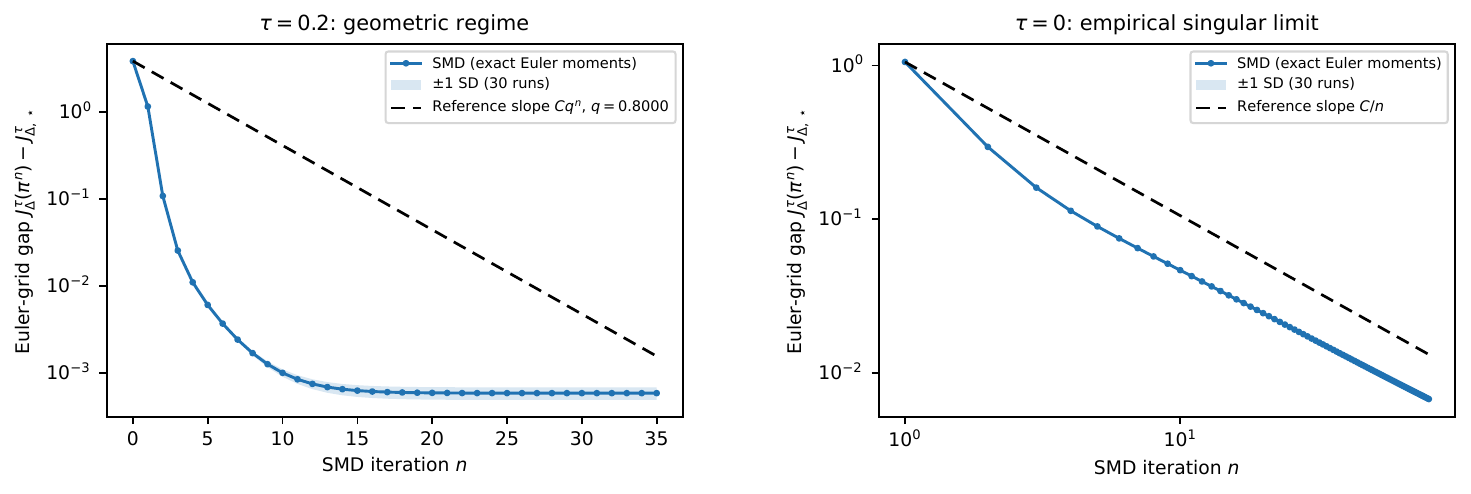}
\caption{Control-dependent volatility. Euler-grid gap
$G_{\Delta,n}^\tau$ for $\tau=0.2$ (left) and $\tau=0$ (right), averaged over
30 independent runs. The shaded region represents one standard deviation;
the higher regularised floor is consistent with the additional
$Z$-regression error entering the mirror update.}
\label{fig:rates-ctrlvol}
\end{figure}

\FloatBarrier

\section{Conclusion}

We have developed a stochastic mirror descent framework for continuous-time
McKean--Vlasov control with measure-valued controls. The analysis combines
the first-order calculus of the MKV forward--backward system with the
Bregman geometry of the control space. Relative smoothness is obtained both
for nonlinear models with bounded cost derivatives and for affine dynamics
with linearly growing cost derivatives. The latter regime includes standard
quadratic costs, while the entropy-regularised LQ specialisation closes on an
invariant Gaussian orbit.

Trajectory-wise relative semiconvexity provides the complementary lower
curvature estimate. Unlike pointwise Hamiltonian convexity, it permits
compensation between the dynamics and the costs along admissible
trajectories. Its mean--fluctuation reduction makes the condition explicit
for affine-quadratic models and identifies a financially motivated
procyclical example with a strictly positive curvature defect. Together,
relative smoothness and semiconvexity yield energy dissipation, a sublinear
rate at the critical regularisation level, and geometric convergence above
that threshold.

Several questions remain open. The constants entering the global smoothness
estimate are not optimised, and outside invariant parametric classes the
closure of the mirror iterates must still be verified for each regulariser.
It would also be natural to combine the present optimisation analysis with
non-asymptotic error bounds for the numerical approximation of the MKV
forward--backward system, and to investigate non-Gaussian invariant families
for nonlinear mean-field models.

\newpage
\section{Proofs}\label{sec:proofs}
\subsection{Technical results on the adjoint process}

\begin{Lemma}[Uniform $H^\infty$ and BMO bounds on the adjoint process]\label{lem:Ybound}
Suppose Assumption~\ref{ass:H3} holds and that the adjoint
BSDE~\eqref{eq:adjoint-mkv-full} is well posed in the sense of
Proposition~\ref{prop:wellposed}. 
Then, it holds
\[
  \sup_{\pi\in\mathcal{A}_{\mathscr{C}}}\ \esssup_{(t,\omega)} |Y_t^{\pi}|
  \;=\;\sup_{\pi\in\mathcal{A}_{\mathscr{C}}}\|Y^{\pi}\|_{H^\infty}\;<\; \infty.
\]

Moreover, it holds
\[
\sup_{\pi\in\mathcal{A}_{\mathscr{C}}}\|Z^\pi\cdot W\|_{\mathrm{BMO}}\;<\; \infty.
\]

\end{Lemma}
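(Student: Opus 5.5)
The adjoint BSDE \eqref{eq:adjoint-mkv-full} is linear in $(Y,Z)$. Under Assumption~\ref{ass:H3} its generator reads
\[
D_xH_t^0(\Theta_t^\pi,\pi_t)+\widetilde\E\bigl[(\partial_\mu H_t^0)(\widetilde\Theta_t^\pi,\widetilde\pi_t)(X_t^\pi)\bigr]
= (D_xb_t)^{\trans}Y_t+(D_x\sigma_t)^{*}Z_t+D_xf_t
+\widetilde\E\bigl[(\partial_\mu b_t)^{\trans}\widetilde Y_t+(\partial_\mu\sigma_t)^{*}\widetilde Z_t+\partial_\mu f_t\bigr].
\]
All coefficient matrices here are bounded by $K$, and the free terms $D_xf_t$, $\partial_\mu f_t$ are bounded by $K$. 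The terminal value is bounded by $2K$. The plan is to freeze the independent-copy terms and treat them as a deterministic forcing.

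\textbf{Step 1: deterministic bound on the mean-field forcing.} Set
\[
\phi_t:=\widetilde\E\bigl[(\partial_\mu b_t)^{\trans}\widetilde Y_t+(\partial_\mu\sigma_t)^{*}\widetilde Z_t\bigr](X_t).
\]
By Jensen and the uniform $S^2\times H^2$ bound \eqref{estimates:bsde:S2:H2:uniform:pi} of Proposition~\ref{prop:wellposed}(1),
\[
|\phi_t|\le K\bigl(\E|Y_t|+\E|Z_t|\bigr)=:\beta_t .
\]
The function $\beta$ is deterministic and satisfies $\sup_\pi\int_0^T\beta_t^2\,\d t\le C$, uniformly in $\pi$.

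\textbf{Step 2: the $H^\infty$ bound.} With $\phi$ frozen, $(Y,Z)$ solves a classical linear BSDE with bounded coefficients $D_xb$, $D_x\sigma$. Write $Y_t$ via the adjoint linear SDE $\Gamma^t_s$ (the standard dual representation):
\[
Y_t=\E\Bigl[\Gamma^t_T\xi+\int_t^T\Gamma^t_s\bigl(D_xf_s+\widetilde\E[\partial_\mu f_s]+\phi_s\bigr)\d s\,\Big|\,\mathcal F_t\Bigr].
\]
Here $\Gamma^t$ solves $\d\Gamma=\Gamma(D_xb\,\d s+D_x\sigma\,\d W)$ with $\Gamma^t_t=I$, and has conditional moments $\E[|\Gamma^t_s|^2\mid\mathcal F_t]\le e^{CK(s-t)}$ uniformly. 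Cauchy--Schwarz then gives
\[
|Y_t|\le C\Bigl(1+\int_0^T\beta_s\,\d s\Bigr)\le C',
\]
with $C'$ independent of $\pi$.

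In the multidimensional case $\Gamma$ is matrix-valued. Alternatively, Itô's formula on $e^{\alpha t}|Y_t|^2$ with conditional expectations gives the same result: the $Z$-term in the generator is absorbed by $\tfrac12|Z|^2$ and the rest is bounded.

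\textbf{Step 3: the BMO bound.} Apply Itô to $|Y_t|^2$ between a stopping time $\tau$ and $T$, then take $\E[\cdot\mid\mathcal F_\tau]$:
\[
\E\Bigl[\int_\tau^T|Z_s|^2\d s\,\Big|\,\mathcal F_\tau\Bigr]\le \|\xi\|_\infty^2+2\,\E\Bigl[\int_\tau^T|Y_s|\bigl(K|Y_s|+K|Z_s|+2K+\beta_s\bigr)\d s\,\Big|\,\mathcal F_\tau\Bigr].
\]
Using Step 2 and Young's inequality $2C'K|Z|\le\tfrac12|Z|^2+2C'^2K^2$, absorb half of the $Z$-term on the left. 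Since $\int\beta\le C$, the remaining terms are bounded uniformly. This gives $\|Z\cdot W\|_{\mathrm{BMO}}^2\le C''$ uniformly in $\pi$.

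\textbf{Main obstacle.} The delicate point is Step 1. The independent-copy term involves $\widetilde Z$, which is only in $H^2$ and not a priori bounded. Because it enters only through an expectation, however, it is a deterministic $L^2(0,T)$ function, and this is exactly what makes the pathwise estimate go through. One must check that $\E|Z_t|$ is controlled in $L^1(\d t)$ uniformly, which follows from \eqref{estimates:bsde:S2:H2:uniform:pi}. In Step 2 the only care needed is uniformity of the $\Gamma$ moment bounds, which depend only on $K$, $T$ and $d'$.
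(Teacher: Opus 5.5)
Your proposal is correct and follows essentially the same route as the paper. You freeze the independent-copy terms into a deterministic forcing bounded by $C(1+\E|Y_t|+\E|Z_t|)$, which is square integrable in time uniformly in $\pi$ by the uniform $S^2\times H^2$ estimate. You then get the $H^\infty$ bound from the linear-BSDE representation with the matrix stochastic flow and its uniform conditional moments, and the BMO bound from It\^o's formula for $|Y|^2$ between a stopping time and $T$, using Young's inequality on the pathwise $|Z|$ term. Your alternative $e^{\alpha t}|Y_t|^2$ energy argument for the $H^\infty$ bound is also valid here and avoids the stochastic flow entirely.
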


\begin{proof}
Fix $\pi\in\mathcal{A}_{\mathscr{C}}$, write $(Y,Z)=(Y^\pi,Z^\pi)$, and set
$\mu_s=\mathcal L(X_s^\pi)$. The terminal condition is
\[
 \xi^\pi
 :=D_xg(X_T^\pi,\mu_T)
   +\widetilde\E\bigl[(\partial_\mu g)(\widetilde X_T^\pi,\mu_T)(X_T^\pi)\bigr],
 \qquad |\xi^\pi|\le 2K.
\]
The adjoint equation is a linear BSDE in $(Y,Z)$. Its inhomogeneous term is
\begin{align*}
 \bar{\Dc}_s^\pi
 := {}&D_xf_s(X_s^\pi,\mu_s,\pi_s)\\
 &+\widetilde\E\Big[
   (\partial_\mu b_s)(\widetilde X_s^\pi,\mu_s,\widetilde\pi_s)(X_s^\pi)\trans
       \widetilde Y_s^\pi
   +(\partial_\mu\sigma_s)(\widetilde X_s^\pi,\mu_s,\widetilde\pi_s)(X_s^\pi)\trans
       \widetilde Z_s^\pi\\
 &\hspace{39mm}
   +(\partial_\mu f_s)(\widetilde X_s^\pi,\mu_s,\widetilde\pi_s)(X_s^\pi)
 \Big].
\end{align*}
By Assumption~\ref{ass:H3}, with a constant $C$ depending only on the dimensions and on $K$,
\begin{equation}\label{eq:barD-deterministic-bound}
 |\bar{\Dc}_s^\pi|
 \le C\bigl(1+\E|Y_s^\pi|+\E|Z_s^\pi|\bigr).
\end{equation}
Notice that the right-hand side is deterministic. Therefore, the uniform
$S^2\times H^2$ estimate in Proposition~\ref{prop:wellposed} yields
\begin{equation}\label{eq:barD-L2-bound}
 \sup_{\pi\in\mathcal A_{\mathscr C}}
 \int_0^T|\bar{\Dc}_s^\pi|^2\,\d s<\infty.
\end{equation}

Let $S^\pi$ be the matrix-valued stochastic flow defined by
\[
 \d S_s^\pi
 =S_s^\pi D_xb_s(X_s^\pi,\mu_s,\pi_s)\,\d s
  +S_s^\pi D_x\sigma_s(X_s^\pi,\mu_s,\pi_s)\,\d W_s,
 \qquad S_0^\pi=I_d.
\]
The representation formula for multidimensional linear BSDEs
\cite[Proposition~3.1]{harterrichou19} gives
\begin{equation}\label{expres:y:bsde-new}
 Y_t^\pi
 =\E_t\left[(S_t^\pi)^{-1}S_T^\pi\xi^\pi
   +\int_t^T(S_t^\pi)^{-1}S_s^\pi\bar{\Dc}_s^\pi\,\d s\right].
\end{equation}
Since $D_xb$ and $D_x\sigma$ are uniformly bounded, the standard conditional
moment estimate for this linear stochastic flow (see also
\cite[Section~3]{harterrichou19}) gives a constant $C_S$, independent of $\pi$, such that
\[
 \sup_{\pi\in\mathcal A_{\mathscr C}}\sup_{0\le t\le T}
 \left\|\E_t\left[\sup_{t\le s\le T}
 |(S_t^\pi)^{-1}S_s^\pi|^2\right]\right\|_{L^\infty}
 \le C_S.
\]
Combining this estimate with \eqref{eq:barD-L2-bound}, the conditional
Cauchy--Schwarz inequality, and $|Y_T^\pi|\le2K$ in
\eqref{expres:y:bsde-new}, we obtain a constant $M<\infty$ such that
\begin{equation}\label{eq:Y-uniform-bound}
 \sup_{\pi\in\mathcal A_{\mathscr C}}\|Y^\pi\|_{H^\infty}\le M.
\end{equation}

It remains to prove the BMO estimate. Denote the full driver of the adjoint
BSDE by $\Dc_s^\pi$. Assumption~\ref{ass:H3} and
\eqref{eq:Y-uniform-bound} imply
\begin{equation}\label{eq:driver-bound-bmo}
 |\Dc_s^\pi|
 \le C\bigl(1+M+|Z_s^\pi|+\E|Z_s^\pi|\bigr).
\end{equation}
Let $\eta$ be an arbitrary stopping time with values in $[0,T]$.
Applying It\^o's formula to $|Y^\pi|^2$ on $[\eta,T]$ and taking the
conditional expectation gives
\begin{equation}\label{eq:ito-bmo}
 |Y_\eta^\pi|^2+\E_\eta\int_\eta^T|Z_s^\pi|^2\,\d s
 =\E_\eta|Y_T^\pi|^2
   +2\E_\eta\int_\eta^T (Y_s^\pi)\trans\Dc_s^\pi\,\d s.
\end{equation}
Indeed, the stochastic integral is a true martingale because $Y^\pi$ is
bounded and $Z^\pi\in H^2$. By \eqref{eq:driver-bound-bmo} and the bound
$|Y_s^\pi|\le M$, we first have
\[
 2\bigl|(Y_s^\pi)\trans\Dc_s^\pi\bigr|
 \le C_M\bigl(1+|Z_s^\pi|+\E|Z_s^\pi|\bigr).
\]
Applying Young's inequality only to the pathwise term $C_M|Z_s^\pi|$
and leaving the deterministic term $C_M\E|Z_s^\pi|$ unchanged gives
\[
 2\bigl|(Y_s^\pi)\trans\Dc_s^\pi\bigr|
 \le \frac12|Z_s^\pi|^2+C_M\bigl(1+\E|Z_s^\pi|\bigr),
\]
where $C_M$ is independent of $\pi$. Proposition~\ref{prop:wellposed} also
implies, for some $C_0<\infty$ independent of $\pi$,
\[
 \int_0^T\E|Z_s^\pi|\,\d s
 \le \sqrt{T}\left(\E\int_0^T|Z_s^\pi|^2\,\d s\right)^{1/2}
 \le C_0.
\]
Using these bounds in \eqref{eq:ito-bmo}, dropping $|Y_\eta^\pi|^2$, and
absorbing the term $\frac12\E_\eta\int_\eta^T|Z_s^\pi|^2\,\d s$ into the
left-hand side yields
\[
 \E_\eta\int_\eta^T|Z_s^\pi|^2\,\d s
 \le C_{\mathrm{BMO}}
 \qquad\text{a.s.},
\]
where $C_{\mathrm{BMO}}$ is independent of both $\eta$ and $\pi$.
Taking the essential supremum over $\eta$ and then the supremum over $\pi$
proves
\[
 \sup_{\pi\in\mathcal A_{\mathscr C}}
 \|Z^\pi\cdot W\|_{\mathrm{BMO}}^2\le C_{\mathrm{BMO}}.
\]
\end{proof}

\begin{Lemma}[Stability of the state process]\label{lem:Xstab}
Under Assumption~\ref{ass:H1}, for all $T$, there exists $C<\infty$ such that for all
$\pi,\pi'\in\AC$,
\[
  \E\Bigl[\sup_{0\le t\le T}|X_t^\pi-X_t^{\pi'}|^2\Bigr]
  \;\le\; C\, \mathcal{D}_h(\pi \mid \pi').
\]
\end{Lemma}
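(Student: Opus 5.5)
The plan is a standard Gr\"onwall argument on the difference of the two McKean--Vlasov SDEs driven by the same Brownian motion and the same initial condition $X_0$. Set $\delta X_t:=X_t^\pi-X_t^{\pi'}$ and $\mu_t=\mathcal L(X_t^\pi)$, $\mu_t'=\mathcal L(X_t^{\pi'})$. Then
\[
\delta X_t=\int_0^t\bigl(b_s(X_s^\pi,\mu_s,\pi_s)-b_s(X_s^{\pi'},\mu_s',\pi_s')\bigr)\,\d s
+\int_0^t\bigl(\sigma_s(X_s^\pi,\mu_s,\pi_s)-\sigma_s(X_s^{\pi'},\mu_s',\pi_s')\bigr)\,\d W_s .
\]
Both solutions exist, are unique, and lie in $S^2$ by Proposition~\ref{prop:wellposed}. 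This guarantees that every quantity below is finite, so the Gr\"onwall step is legitimate.

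\textbf{Step 1 (squared sup estimate).} For $r\in[0,T]$, I bound $\E\sup_{t\le r}|\delta X_t|^2$. The drift part is handled by Cauchy--Schwarz in time, which gives a factor $T$. The stochastic integral is handled by the Burkholder--Davis--Gundy (or Doob) inequality. Together these give
\[
\E\sup_{t\le r}|\delta X_t|^2\le C_T\,\E\int_0^r\bigl(|\Delta b_s|^2+|\Delta\sigma_s|^2\bigr)\,\d s,
\]
where $\Delta b_s$ and $\Delta\sigma_s$ denote the coefficient differences.

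\textbf{Step 2 (apply Assumption~\ref{ass:H1}).} Pointwise in $(s,\omega)$, Assumption~\ref{ass:H1} with $(m,m')=(\pi_s,\pi_s')$ yields
\[
|\Delta b_s|^2+|\Delta\sigma_s|^2\le K\bigl(|\delta X_s|^2+W_2(\mu_s,\mu_s')^2+D_h(\pi_s\mid\pi_s')\bigr).
\]
The order of arguments in $D_h$ must be kept exactly as in the assumption, since $D_h$ is not symmetric. The law term is controlled by the synchronous coupling $(X_s^\pi,X_s^{\pi'})$:
\[
W_2(\mu_s,\mu_s')^2\le\E|\delta X_s|^2\le\E\sup_{u\le s}|\delta X_u|^2 .
\]
This is the only genuinely McKean--Vlasov point, and it is harmless because the bound is deterministic and of the same form as the state term.

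\textbf{Step 3 (Gr\"onwall).} Set $\phi(r):=\E\sup_{t\le r}|\delta X_t|^2$, which is finite and nondecreasing. Combining the previous steps gives
\[
\phi(r)\le 2KC_T\int_0^r\phi(s)\,\d s+KC_T\,\mathcal D_h(\pi\mid\pi').
\]
Gr\"onwall's lemma then yields $\phi(T)\le KC_Te^{2KC_TT}\mathcal D_h(\pi\mid\pi')$, which is the claim with a constant depending only on $K$, $T$ and $d'$, uniformly in $\pi,\pi'$.

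If $\mathcal D_h(\pi\mid\pi')=\infty$, the statement is trivial. The only mild obstacle is measurability of $s\mapsto D_h(\pi_s\mid\pi_s')$, which is needed so that $\mathcal D_h$ makes sense. This is implicit in the definition of $\mathcal D_h$ and otherwise can be handled by working with outer integrals.
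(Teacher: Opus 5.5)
Your proposal is correct and follows essentially the same route as the paper: a Burkholder--Davis--Gundy/Cauchy--Schwarz estimate for $\E\sup_{t\le r}|\delta X_t|^2$, Assumption~\ref{ass:H1} applied pointwise with the synchronous-coupling bound $W_2(\mu_s,\mu_s')^2\le\E|\delta X_s|^2$, and Gr\"onwall's lemma. You spell out the details the paper calls ``standard'': the $W_2$ coupling, finiteness of $\phi$ via Proposition~\ref{prop:wellposed}, and the order of the arguments in $D_h$.
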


\begin{proof}
Fix $\pi,\pi'\in\mathcal{A}_{\mathscr{C}}$. 
\noindent From Assumption~\ref{ass:H1}, the Burkholder-Davis-Gundy inequality, it is standard to get  
\begin{align*}
     \E\Bigl[\sup_{0\le t\le T}|X_t^\pi-X_t^{\pi'}|^2\Bigr]
   &\le C\!\int_0^{T}\!\E\Bigl[\sup_{0\le t\le s}|X_t^{\pi}-X_t^{\pi'}|^2\Bigr]\d s
   + C\, \mathcal{D}_{h}(\pi \mid \pi').
\end{align*}
 
\noindent By Gr\"onwall's lemma, up to a modifcation of $C$, we deduce
\[
\E\Bigl[\sup_{0\le t\le T}|X_t^{\pi}-X_t^{\pi'}|^2\Bigr] \le C\,\mathcal{D}_{h}(\pi \mid \pi').
\]

\end{proof}

\begin{Lemma}[Fefferman's inequality]\label{lem:fefferman}
Let $M$ be a BMO martingale and $N$ a martingale with
$\E\bigl[\langle N\rangle_T^{1/2}\bigr]<\infty$. Then
\[
  \E\!\left[\int_0^T |\d\langle M,N\rangle_t|\right]
  \;\le\; \sqrt{2}\,\|M\|_{\mathrm{BMO}}\,\E\!\left[\langle N\rangle_T^{1/2}\right].
\]
\end{Lemma}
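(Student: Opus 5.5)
The plan is the classical Kunita--Watanabe argument with a time-dependent weight, followed by an integration by parts that turns the BMO norm into a conditional-expectation bound. Throughout, the filtration is Brownian, so all martingales are continuous. I write $\|M\|_{\mathrm{BMO}}^2=\sup_\eta\|\E_\eta[\langle M\rangle_T-\langle M\rangle_\eta]\|_{L^\infty}$, matching the usage in Lemma~\ref{lem:Ybound}. To avoid dividing by zero, I fix $\varepsilon>0$ and set $A_t:=(\varepsilon+\langle N\rangle_t)^{1/2}$, which is continuous, adapted, increasing and bounded below by $\sqrt\varepsilon$. By localisation I may also assume $\langle M\rangle_T$ and $\langle N\rangle_T$ are bounded; the general case follows from monotone convergence at the end.

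\textbf{Step 1 (weighted Kunita--Watanabe).} Pathwise, the Kunita--Watanabe inequality applied with the weights $A_t^{1/2}$ and $A_t^{-1/2}$ gives
\[
\int_0^T|\d\langle M,N\rangle_t|\le\Big(\int_0^T A_t\,\d\langle M\rangle_t\Big)^{1/2}\Big(\int_0^T A_t^{-1}\,\d\langle N\rangle_t\Big)^{1/2}.
\]
Since $\d\langle N\rangle_t=\d(A_t^2)$, the second factor equals $\int_0^T 2\,\d A_t\le 2A_T$. I then take expectations and apply Cauchy--Schwarz, which yields
\[
\E\!\int_0^T|\d\langle M,N\rangle_t|\le\Big(\E\!\int_0^T A_t\,\d\langle M\rangle_t\Big)^{1/2}\big(2\,\E A_T\big)^{1/2}.
\]

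\textbf{Step 2 (transfer onto $\d A$ and use BMO).} I write $A_t=A_0+\int_0^t\d A_s$ and use Fubini to obtain
\[
\E\!\int_0^T A_t\,\d\langle M\rangle_t=\E\big[A_0\langle M\rangle_T\big]+\E\!\int_0^T(\langle M\rangle_T-\langle M\rangle_s)\,\d A_s.
\]
In both terms, $\langle M\rangle_T-\langle M\rangle_s$ may be replaced by its conditional expectation $\E_s[\langle M\rangle_T-\langle M\rangle_s]$. For the first term this holds because $A_0$ is $\mathcal F_0$-measurable. For the second it is the optional-projection identity $\E\int\xi\,\d A=\E\int{}^{o}\xi\,\d A$, valid for adapted continuous increasing $A$. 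The optional projection of $s\mapsto\langle M\rangle_T-\langle M\rangle_s$ is the c\`adl\`ag version of $s\mapsto\E_s[\langle M\rangle_T-\langle M\rangle_s]$, and evaluating it at stopping times shows that it is bounded by $\|M\|_{\mathrm{BMO}}^2$. Hence
\[
\E\!\int_0^T A_t\,\d\langle M\rangle_t\le\|M\|_{\mathrm{BMO}}^2\,\E A_T.
\]

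\textbf{Step 3 (conclusion).} Combining the two steps gives
\[
\E\!\int_0^T|\d\langle M,N\rangle_t|\le\sqrt2\,\|M\|_{\mathrm{BMO}}\,\E\big[(\varepsilon+\langle N\rangle_T)^{1/2}\big].
\]
Letting $\varepsilon\downarrow0$ and removing the localisation by monotone convergence yields the lemma.

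The main obstacle is the rigorous justification of the projection step in Step 2, that is, replacing the non-adapted integrand by its optional projection. To keep this clean, I will first take $A$ to be a step process on a dyadic grid of stopping times, where the identity reduces to conditioning termwise. I will then pass to the limit using the continuity of $A$ together with the localisation bounds.
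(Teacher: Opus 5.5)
The paper gives no proof of this lemma. It is quoted as the classical Fefferman inequality and is used only through Lemma~\ref{lem:bmoL1L2}, where $M=|f|\cdot W$ and $N=|g|\cdot W$ are continuous. Your restriction to continuous martingales therefore costs nothing for the paper's purposes.

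Your argument is correct, including the constant $\sqrt2$. It is the standard textbook proof: Kunita--Watanabe with weight $\langle N\rangle^{1/2}$, then transferring $\d\langle M\rangle$ onto $\d\langle N\rangle^{1/2}$, then bounding the optional projection of $\langle M\rangle_T-\langle M\rangle_s$ by $\|M\|_{\mathrm{BMO}}^2$. The $\varepsilon$-regularisation correctly avoids dividing by $\langle N\rangle_t$ near $0$, and the two Cauchy--Schwarz factors $(\|M\|_{\mathrm{BMO}}^2\E A_T)^{1/2}$ and $(2\E A_T)^{1/2}$ combine exactly as claimed.

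Three simplifications are available:
\begin{itemize}
\item The localisation is unnecessary, because every integrand is nonnegative, so Tonelli applies even when some quantities are a priori infinite.
\item The dyadic approximation in Step~2 is also unnecessary. The identity $\E\int_0^T\xi_s\,\d A_s=\E\int_0^T{}^{o}\xi_s\,\d A_s$ holds for any nonnegative measurable $\xi$ and adapted continuous increasing $A$. Alternatively, the time change $\tau_u:=\inf\{s:A_s>u\}$ rewrites $\int_0^T\xi_s\,\d A_s$ as $\int_{A_0}^{A_T}\xi_{\tau_u}\,\d u$, which reduces the claim to conditioning on $\mathcal F_{\tau_u}$ at the stopping times $\tau_u$.
\item Since ${}^{o}\xi$ is right-continuous, it suffices to check the BMO bound at deterministic rational times rather than at arbitrary stopping times.
\end{itemize}
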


\begin{Lemma}\label{lem:bmoL1L2}
Let $f,g:\Omega\times[0,T]\to\R$ be square-integrable progressively measurable
processes such that $\|f\cdot W\|_{\mathrm{BMO}}<\infty$, where $W$ is a Brownian motion.
Then
\[
  \E\!\left[\int_0^T |f_t\,g_t|\,\d t\right]
  \;\le\; \sqrt{2}\,\|f\cdot W\|_{\mathrm{BMO}}\;
          \E\!\left[\Bigl(\int_0^T |g_t|^2\,\d t\Bigr)^{1/2}\right].
\]
\end{Lemma}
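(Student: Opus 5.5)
The plan is to reduce Lemma~\ref{lem:bmoL1L2} to Fefferman's inequality (Lemma~\ref{lem:fefferman}) by taking for $M$ the BMO martingale $f\cdot W$ and for $N$ a stochastic integral built from $g$ with a sign correction. Set
\[
 M_t:=\int_0^t f_s\,\d W_s,\qquad
 N_t:=\int_0^t \operatorname{sgn}(f_sg_s)\,g_s\,\d W_s ,
\]
where $\operatorname{sgn}$ is a measurable sign function with $\operatorname{sgn}(0)=0$. Both integrands are progressively measurable and square integrable, so $M$ and $N$ are well-defined square-integrable martingales. By hypothesis, $M$ is BMO with $\|M\|_{\mathrm{BMO}}=\|f\cdot W\|_{\mathrm{BMO}}$.

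First, I identify the brackets. We have
\[
 \d\langle M,N\rangle_t=\operatorname{sgn}(f_tg_t)\,f_tg_t\,\d t=|f_tg_t|\,\d t ,
\]
so $\int_0^T|\d\langle M,N\rangle_t|=\int_0^T|f_tg_t|\,\d t$. Moreover, $\langle N\rangle_T=\int_0^T \operatorname{sgn}(f_tg_t)^2|g_t|^2\,\d t\le\int_0^T|g_t|^2\,\d t$. Hence
\[
 \E\bigl[\langle N\rangle_T^{1/2}\bigr]\le \E\Bigl[\Bigl(\int_0^T|g_t|^2\,\d t\Bigr)^{1/2}\Bigr]\le \Bigl(\E\int_0^T|g_t|^2\,\d t\Bigr)^{1/2}<\infty,
\]
the last step by Jensen. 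This verifies the integrability requirement of Lemma~\ref{lem:fefferman}.

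Second, applying Lemma~\ref{lem:fefferman} to the pair $(M,N)$ gives
\[
 \E\int_0^T|f_tg_t|\,\d t\le\sqrt2\,\|f\cdot W\|_{\mathrm{BMO}}\,\E\bigl[\langle N\rangle_T^{1/2}\bigr],
\]
and combining this with the bracket bound above yields the claim. If one prefers not to use the sign trick, an alternative is to take $N=g\cdot W$ and invoke the total-variation form of Fefferman's inequality directly. Either way, the total variation of $\d\langle M,N\rangle$ is $|f_tg_t|\,\d t$.

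The only delicate point is to make sure Lemma~\ref{lem:fefferman} is applied in its total-variation form, or to the sign-modified martingale, so that the absolute value sits inside the time integral. The sign-modified construction handles this cleanly. Beyond that, the statement is a direct specialisation, and I expect no real obstacle. The multidimensional case, if needed later for $Z^\pi$, follows componentwise, with a dimensional constant.
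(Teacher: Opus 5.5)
Your proof is correct and follows the paper's route: both reduce the claim to Fefferman's inequality after adjusting signs so that the cross-variation density becomes $|f_tg_t|$. The paper takes $M=|f|\cdot W$ and $N=|g|\cdot W$, which implicitly uses $\| |f|\cdot W\|_{\mathrm{BMO}}=\|f\cdot W\|_{\mathrm{BMO}}$; you keep $M=f\cdot W$ and put the sign on $N$, so you do not need that remark.
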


\begin{proof}
By It\^o's isometry for the cross-variation,
$\E[\int_0^T |f_t g_t|\,\d t]=\E[\int_0^T \d\langle |f|\cdot W,\,|g|\cdot W\rangle_t]$.
Applying Lemma~\ref{lem:fefferman} with $M=|f|\cdot W$ and $N=|g|\cdot W$, together with
$\langle |g|\cdot W\rangle_T=\int_0^T|g_t|^2\,\d t$, gives the claim.
\end{proof}

\begin{Lemma}[Lipschitz continuity of the adjoint process]\label{lem:YZlipc}
Suppose Assumptions~\ref{ass:H1},~\ref{ass:H2}
and~\ref{ass:deriv} hold. Assume, in addition, that one of the following two
alternative regimes is satisfied:
\begin{enumerate}
\item[(i)] \emph{Bounded-cost regime:} Assumption~\ref{ass:H3} holds;
\item[(ii)] \emph{Linear-growth affine regime:}
Assumptions~\ref{ass:H3gen} and~\ref{ass:affine-dynamics} hold.
\end{enumerate}
Then there exists a constant $C>0$, independent of $\pi$ and $\pi'$, such that
for all $\pi,\pi'\in\AC$,
\[
  \E\Bigl[\sup_{t\in[0,T]}|Y_t^{\pi}-Y_t^{\pi'}|^2\Bigr]
  + \E \Big[\int_0^T |Z_t^{\pi}-Z_t^{\pi'}|^2\,\d t \Big]
  \;\le\; C\,\mathcal{D}_{h}(\pi \mid \pi').
\]
\end{Lemma}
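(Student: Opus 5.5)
We work with the differences $\delta Y:=Y^\pi-Y^{\pi'}$, $\delta Z:=Z^\pi-Z^{\pi'}$ and $\delta X:=X^\pi-X^{\pi'}$. Subtracting the two adjoint equations~\eqref{eq:adjoint-mkv-full} gives a linear MKV BSDE for $(\delta Y,\delta Z)$. Its terminal condition is
\[
\delta\xi=D_xg(X_T^\pi,\mu_T^\pi)-D_xg(X_T^{\pi'},\mu_T^{\pi'})+\widetilde\E[\cdots],
\]
where $\widetilde\E[\cdots]$ is the corresponding difference of Lions-derivative terms. Its driver splits into two parts. The first is a \emph{linear part}
\[
D_xb_t(X^{\pi}_t,\mu^\pi_t,\pi_t)^{\trans}\delta Y_t+D_x\sigma_t(\cdot)^{\trans}\delta Z_t
+\widetilde\E\big[(\partial_\mu b_t)(\widetilde X^\pi_t,\mu^\pi_t,\widetilde\pi_t)(X^\pi_t)^{\trans}\widetilde{\delta Y}_t+(\partial_\mu\sigma_t)(\cdots)^{\trans}\widetilde{\delta Z}_t\big],
\]
with coefficients bounded by $K$ in both regimes. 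The second is a \emph{source part}
\[
\Delta_t=\big[D_xb_t(\Theta^\pi_t\text{-args})-D_xb_t(\Theta^{\pi'}_t\text{-args})\big]^{\trans}Y^{\pi'}_t+\big[D_x\sigma_t(\cdot)-D_x\sigma_t(\cdot)\big]^{\trans}Z^{\pi'}_t+\text{(the same for }\partial_\mu b,\partial_\mu\sigma\text{ against }\widetilde Y^{\pi'},\widetilde Z^{\pi'})+\delta(D_xf)+\delta(\widetilde\E\partial_\mu f).
\]
The standard $S^2\times H^2$ a priori estimate for linear MKV BSDEs with bounded coefficients (It\^o on $|\delta Y|^2$, Young, Jensen for the independent-copy terms, Gr\"onwall) yields
\[
\E\Big[\sup_t|\delta Y_t|^2+\int_0^T|\delta Z_t|^2\d t\Big]\le C\Big(\E|\delta\xi|^2+\E\Big(\int_0^T|\Delta_t|\d t\Big)^2\Big).
\]
This estimate holds under (i) and (ii) alike. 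It then remains to bound the right-hand side by $C\mathcal D_h(\pi\mid\pi')$.

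\emph{Terminal term and cost parts of $\Delta$.} By Assumption~\ref{ass:deriv} for $g$, together with $W_2(\mu^\pi_T,\mu^{\pi'}_T)^2\le\E|\delta X_T|^2$ and the same bound for the $\widetilde\E$ terms via Jensen, we get $\E|\delta\xi|^2\le C\E|\delta X_T|^2$. For the $f$-terms, Assumption~\ref{ass:deriv} gives a pointwise bound
\[
|\delta(D_xf)_t|^2\le K\big(|\delta X_t|^2+\E|\delta X_t|^2+D_h(\pi_t\mid\pi'_t)\big),
\]
and similarly for $\partial_\mu f$. Note that these are Lipschitz-type bounds on the derivative differences, so linear growth of $D_xf$ in regime (ii) plays no role here. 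Integrating in time and applying Lemma~\ref{lem:Xstab} bounds all of these contributions by $C\mathcal D_h(\pi\mid\pi')$.

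\emph{Dynamics parts of $\Delta$: the regimes differ.} In regime (ii), Assumption~\ref{ass:affine-dynamics} makes $D_xb,D_x\sigma,\partial_\mu b,\partial_\mu\sigma$ deterministic functions of $t$ alone. All the products with $(Y^{\pi'},Z^{\pi'})$ in $\Delta$ therefore vanish identically. This is precisely why only the fixed-$\pi$ estimate of Proposition~\ref{prop:wellposed} is needed, and the constant stays independent of $\pi,\pi'$.

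In regime (i) these products do not vanish, and they are the main obstacle. Each such term has the form $\Lambda_t^{\trans}Y^{\pi'}_t$ or $\Lambda_t^{\trans}Z^{\pi'}_t$, where
\[
|\Lambda_t|^2\le K\big(|\delta X_t|^2+\E|\delta X_t|^2+D_h(\pi_t\mid\pi'_t)\big).
\]
\begin{itemize}
\item For the $Y$-terms, Lemma~\ref{lem:Ybound} gives $\|Y^{\pi'}\|_{H^\infty}\le M$ uniformly in $\pi'$. These terms are then handled like the cost terms.
\item For the $Z$-terms, $Z^{\pi'}$ is only BMO. We need to bound $\E(\int_0^T|\Lambda_t||Z^{\pi'}_t|\d t)^2$, and a plain Cauchy--Schwarz would require $\int|\Lambda|^2$ to be bounded in $L^\infty(\Omega)$, which fails. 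Instead we use the energy inequality for BMO martingales, namely
\[
\E\Big[\Big(\int_0^T|Z^{\pi'}_t|^2\d t\Big)^p\Big]\le C_p\|Z^{\pi'}\cdot W\|^{2p}_{\mathrm{BMO}},
\]
or equivalently Lemma~\ref{lem:bmoL1L2} combined with a conditional version. Concretely, we apply Cauchy--Schwarz in time inside the square, which leaves $\E\big[\int|\Lambda|^2\int|Z^{\pi'}|^2\big]$. The factor $\int|\Lambda|^2$ is not adapted-bounded, so we write it via $\int_0^T|\Lambda_t|^2\,\E_t\!\int_t^T|Z^{\pi'}_s|^2\d s\,\d t$ (a Garsia/Fubini-type conditioning argument). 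This gives the bound $2\|Z^{\pi'}\cdot W\|^2_{\mathrm{BMO}}\E\int|\Lambda|^2$.
\end{itemize}
By Lemma~\ref{lem:Ybound}, the BMO norm is bounded uniformly in $\pi'$. Combined with Lemma~\ref{lem:Xstab}, the $Z$-terms are therefore also bounded by $C\mathcal D_h(\pi\mid\pi')$.

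The delicate points are two: justifying this conditioning identity with the symmetric-in-time product, and handling the independent-copy term $\widetilde\E[\Lambda^{\trans}\widetilde Z^{\pi'}]$. For the latter, after Cauchy--Schwarz in $\widetilde\omega$ only $\E|Z^{\pi'}_t|^2$, which is deterministic, appears, so it is handled directly by the uniform $H^2$ bound \eqref{estimates:bsde:S2:H2:uniform:pi}.
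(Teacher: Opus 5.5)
The gap is in the step you yourself flagged as delicate. Most of the rest is sound: the reduction to the linear a priori estimate with source in $L^2(\Omega;L^1(0,T))$, the terminal and $f$-terms, the vanishing of the dynamics terms in regime (ii), and the $Y^{\pi'}$-terms via Lemma~\ref{lem:Ybound}. Your treatment of the independent-copy term also works, and is simpler than the paper's product-space BMO argument: Cauchy--Schwarz in $\widetilde\omega$ leaves the deterministic factor $\int_0^T\E|Z^{\pi'}_t|^2\,\d t$, bounded by \eqref{estimates:bsde:S2:H2:uniform:pi}.

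The failing term is $\Lambda_t^{\trans}Z^{\pi'}_t$ with $\Lambda_t=D_x\sigma_t(X^\pi_t,\mu^\pi_t,\pi_t)-D_x\sigma_t(X^{\pi'}_t,\mu^{\pi'}_t,\pi'_t)$ in regime (i). After Cauchy--Schwarz in time you need $\E[\int_0^T|\Lambda_t|^2\,\d t\int_0^T|Z^{\pi'}_s|^2\,\d s]\le C\,\E\int_0^T|\Lambda_t|^2\,\d t$. Your conditioning only handles the triangle $\{s\ge t\}$, whose contribution $\E\int_0^T|\Lambda_t|^2\,\E_t[\int_t^T|Z^{\pi'}_s|^2\,\d s]\,\d t$ is indeed at most $\|Z^{\pi'}\cdot W\|^2_{\mathrm{BMO}}\E\int_0^T|\Lambda_t|^2\,\d t$. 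The other triangle equals $\E\int_0^T|Z^{\pi'}_s|^2\,\E_s[\int_s^T|\Lambda_t|^2\,\d t]\,\d s$, and bounding it would need a BMO bound on $\Lambda$, which you do not have. The symmetrisation behind a factor $2$, namely $(\int_0^Ta)^2=2\int_0^Ta_t\int_t^Ta_s\,\d s\,\d t$, applies to a square, not to a product of two different processes.

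As a statement about an adapted $\Lambda$ and a BMO integrand, the inequality is in fact false. $Q:=\int_0^{t_0}|Z_s|^2\,\d s$ may be unbounded for a BMO martingale. Taking $\Lambda_t=\mathbf 1_{\{t\ge t_0\}}\phi(Q)$ with $\phi$ supported on $\{Q\ge R\}$ then makes the ratio at least $R$. The energy inequality does not rescue this: H\"older would require $\E[(\int_0^TD_h(\pi_t\mid\pi'_t)\,\d t)^p]$ for some $p>1$ instead of $\mathcal D_h(\pi\mid\pi')$.

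The estimate you actually need, $\E[(\int_0^T|\Lambda_t||Z^{\pi'}_t|\,\d t)^2]\le C\,\E\int_0^T|\Lambda_t|^2\,\d t$, is true, but it must be proved without first separating the two factors. To keep your structure, argue by duality. For $\xi\ge0$ in $L^2(\Omega)$ and $\xi_t:=\E_t[\xi]$, one has $\E[\xi\int_0^T|\Lambda_t||Z_t|\,\d t]=\E\int_0^T\xi_t|\Lambda_t||Z_t|\,\d t$. After Cauchy--Schwarz it suffices that $\E\int_0^T\xi_t^2|Z_t|^2\,\d t\le\|Z\cdot W\|^2_{\mathrm{BMO}}\E[\sup_t\xi_t^2]\le4\|Z\cdot W\|^2_{\mathrm{BMO}}\E[\xi^2]$. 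This follows by integrating by parts against the running maximum of $\xi^2$, using the BMO condition, and then Doob's inequality.

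The paper sidesteps this quadratic estimate entirely. It does not put the term into the source of an a priori bound. Instead it keeps the cross term $\E\int_0^Te^{\beta t}|\delta Y_t||\Lambda_t||Z^{\pi'}_t|\,\d t$ from It\^o's formula for $e^{\beta t}|\delta Y_t|^2$ and bounds it by Lemma~\ref{lem:bmoL1L2} by $C(\E\sup_t|\delta Y_t|^2)^{1/2}\mathcal D_h(\pi\mid\pi')^{1/2}$. It then absorbs the supremum after a separate BDG estimate, so Fefferman's inequality is only applied to a pairing that is linear in $Z^{\pi'}$.
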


\begin{proof}
Fix $\pi,\pi'\in\mathcal A_{\mathscr C}$ and set
\[
 \delta X:=X^{\pi'}-X^\pi,\qquad
 \delta Y:=Y^{\pi'}-Y^\pi,\qquad
 \delta Z:=Z^{\pi'}-Z^\pi.
\]
We use the same notation with a tilde for an independent copy. Also set
$\mu_t^\pi:=\mathcal L(X_t^\pi)$ and
$\Xi_t^\pi:=(X_t^\pi,\mu_t^\pi,\pi_t)$.
To avoid ambiguity, denote the full generator of the adjoint BSDE by
\begin{align*}
 G_t^\pi
 :={}&D_xH_t^0\bigl(\Theta_t^\pi,\pi_t\bigr) +\widetilde\E\left[
   \partial_\mu H_t^0
   \bigl(\widetilde\Theta_t^\pi,\widetilde\pi_t\bigr)
   (X_t^\pi)
 \right],
\end{align*}
and set $\delta G_t:=G_t^{\pi'}-G_t^\pi$. Thus
\[
 \d\delta Y_t=-\delta G_t\,\d t+\delta Z_t\,\d W_t.
\]

\medskip
\noindent\emph{Common estimates.}
By Proposition~\ref{prop:wellposed}, $(Y^\pi,Z^\pi)$ and
$(Y^{\pi'},Z^{\pi'})$ belong to $S^2\times H^2$ in both regimes. Hence, for
every $\beta\geq0$,
\[
 M_t:=\int_0^t e^{\beta r}\delta Y_r\trans\delta Z_r\,\d W_r
\]
is an $\mathcal H^1$-martingale. Indeed,
\begin{align*}
 \E\bigl[\langle M\rangle_T^{1/2}\bigr]
 &\le e^{\beta T}\E\left[
   \sup_{0\leq t\leq T}|\delta Y_t|
   \left(\int_0^T|\delta Z_t|^2\,\d t\right)^{1/2}
 \right]\\
 &\le e^{\beta T}\|\delta Y\|_{S^2}\|\delta Z\|_{H^2}<\infty.
\end{align*}
Applying It\^o's formula to $e^{\beta t}|\delta Y_t|^2$ gives
\begin{align*}
 \d\bigl(e^{\beta t}|\delta Y_t|^2\bigr)
 ={}&e^{\beta t}\bigl(
   \beta|\delta Y_t|^2+|\delta Z_t|^2
   -2\delta Y_t\trans\delta G_t
 \bigr)\,\d t\\
 &+2e^{\beta t}\delta Y_t\trans\delta Z_t\,\d W_t.
\end{align*}
Since the last term is an $\mathcal H^1$-martingale, we get
\begin{align}
 &\E\int_0^T e^{\beta t}
 \bigl(|\delta Z_t|^2+\beta|\delta Y_t|^2\bigr)\,\d t
 \nonumber\\
 &\qquad\le e^{\beta T}\E|\delta Y_T|^2
 +2\E\int_0^T e^{\beta t}\delta Y_t\trans
 \delta G_t\,\d t,
 \label{eq:YZ-Ito-expectation}
\end{align}
where the nonpositive term $-\E|\delta Y_0|^2$ has been dropped.

Assumption~\ref{ass:deriv}, Jensen's inequality and the coupling
$(X_T^\pi,X_T^{\pi'})$ yield
\begin{equation}\label{eq:stability:terminal:cond}
 \E|\delta Y_T|^2\le C\E|\delta X_T|^2 \le C\mathcal D_h(\pi\mid\pi').
\end{equation}
\noindent where we used Lemma~\ref{lem:Xstab} for the last inequality.

To decompose the generator, define
\begin{align*}
 B_t^\pi&:=(D_xb_t)(\Xi_t^\pi),
 &\Sigma_t^\pi&:=(D_x\sigma_t)(\Xi_t^\pi),
 &F_t^\pi&:=(D_xf_t)(\Xi_t^\pi),\\
 \overline B_t^\pi
 &:=(\partial_\mu b_t)(\widetilde X_t^\pi,\mu_t^\pi,
       \widetilde\pi_t)(X_t^\pi),
 &\overline\Sigma_t^\pi
 &:=(\partial_\mu\sigma_t)(\widetilde X_t^\pi,\mu_t^\pi,
       \widetilde\pi_t)(X_t^\pi),\\
 \overline F_t^\pi
 &:=(\partial_\mu f_t)(\widetilde X_t^\pi,\mu_t^\pi,
       \widetilde\pi_t)(X_t^\pi).
\end{align*}
Then
\begin{align*}
 \delta G_t
 ={}&(B_t^\pi)\trans\delta Y_t
    +(\Sigma_t^\pi)\trans\delta Z_t\\
 &+\widetilde\E\bigl[(\overline B_t^\pi)\trans
          \delta\widetilde Y_t
        +(\overline\Sigma_t^\pi)\trans\delta\widetilde Z_t\bigr]
    +R_t^f+R_t^b+R_t^\sigma,
\end{align*}
where
\begin{align*}
 R_t^f
 &:=(F_t^{\pi'}-F_t^\pi)
   +\widetilde\E[\overline F_t^{\pi'}-\overline F_t^\pi],\\
 R_t^b
 &:=(B_t^{\pi'}-B_t^\pi)\trans Y_t^{\pi'}
   +\widetilde\E\bigl[
     (\overline B_t^{\pi'}-\overline B_t^\pi)\trans
     \widetilde Y_t^{\pi'}\bigr],\\
 R_t^\sigma
 &:=(\Sigma_t^{\pi'}-\Sigma_t^\pi)\trans Z_t^{\pi'}
   +\widetilde\E\bigl[
     (\overline\Sigma_t^{\pi'}-\overline\Sigma_t^\pi)\trans
     \widetilde Z_t^{\pi'}\bigr].
\end{align*}
Assumption~\ref{ass:deriv}, Jensen's inequality and
Lemma~\ref{lem:Xstab} imply
\begin{equation}\label{eq:Rf-stability}
 \E\int_0^T|R_t^f|^2\,\d t
 \le C\mathcal D_h(\pi\mid\pi').
\end{equation}
Consequently, for every $\eta>0$, Young's inequality gives
\begin{align}
2 \E\int_0^T e^{\beta t}|\delta Y_t|\,|R_t^f|\,\d t
 &\le \eta \E\int_0^T |\delta Y_t|^2\,\d t
   +C_{\eta}\mathcal D_h(\pi\mid\pi').
 \label{eq:Rf-cross-term}
\end{align}
This is the estimate used below for the contribution of $R^f$ to the
last integral in~\eqref{eq:YZ-Ito-expectation}.

It remains to estimate the linear part of $\delta G$. Set
\[
 L_t:=(B_t^\pi)\trans\delta Y_t+(\Sigma_t^\pi)\trans\delta Z_t
 +\widetilde\E\left[
   (\overline B_t^\pi)\trans\delta\widetilde Y_t
   +(\overline\Sigma_t^\pi)\trans\delta\widetilde Z_t
 \right].
\]
Since the four coefficients in this display are uniformly bounded in both
regimes, Cauchy--Schwarz and Young's inequalities yield
\begin{equation}\label{eq:linear-generator-bound}
 2\E\int_0^T e^{\beta t}
   \bigl|\delta Y_t\trans L_t\bigr|\,\d t
 \le C_0\E\int_0^T e^{\beta t}|\delta Y_t|^2\,\d t
 +\frac12\E\int_0^T e^{\beta t}|\delta Z_t|^2\,\d t.
\end{equation}
Indeed, the terms involving the independent copy are controlled using
\[
 \E\left[|\delta Y_t|\,\widetilde\E|\delta\widetilde Y_t|\right]
 =\bigl(\E|\delta Y_t|\bigr)^2
 \le \E|\delta Y_t|^2
\]
and
\[
 \E\left[|\delta Y_t|\,\widetilde\E|\delta\widetilde Z_t|\right]
 =\E|\delta Y_t|\,\E|\delta Z_t|
 \le \rho\E|\delta Z_t|^2+C_\rho\E|\delta Y_t|^2,
\]
where $\rho>0$ is chosen so that all the resulting
$|\delta Z_t|^2$ terms are bounded by the last term in
\eqref{eq:linear-generator-bound}.

\medskip
\noindent\emph{Case (i): bounded-cost regime.}
Lemma~\ref{lem:Ybound} gives, uniformly in the control,
\[
 \|Y^\pi\|_{H^\infty}
 +\|Z^\pi\cdot W\|_{\mathrm{BMO}}\le C.
\]
Using the Lipschitz bounds in Assumption~\ref{ass:deriv}, the uniform bound
on $Y^{\pi'}$, and Lemma~\ref{lem:Xstab}, we obtain
\begin{equation}\label{eq:Rb-stability}
 \E\int_0^T|R_t^b|^2\,\d t
 \le C\mathcal D_h(\pi\mid\pi').
\end{equation}
Hence, exactly as in~\eqref{eq:Rf-cross-term},
\begin{equation}\label{eq:Rb-cross-term}
 2\E\int_0^T e^{\beta t}|\delta Y_t|\,|R_t^b|\,\d t
 \le \eta \E\int_0^T |\delta Y_t|^2\,\d t
 +C_{\eta} \mathcal D_h(\pi\mid\pi').
\end{equation}
Set
\[
 \delta\Sigma_t:=\Sigma_t^\pi-\Sigma_t^{\pi'},
 \qquad
 \delta\overline\Sigma_t
 :=\overline\Sigma_t^\pi-\overline\Sigma_t^{\pi'}.
\]
Assumption~\ref{ass:deriv} and Lemma~\ref{lem:Xstab} imply
\begin{equation}\label{eq:diffusion-derivatives-stability}
 \E\int_0^T\left(
   |\delta\Sigma_t|^2
   +\widetilde\E|\delta\overline\Sigma_t|^2
 \right)\,\d t
 \le C\mathcal D_h(\pi\mid\pi').
\end{equation}
By Lemma~\ref{lem:bmoL1L2} and the uniform BMO bound on $Z^{\pi'}$,
\begin{align*}
 &\E\int_0^T |\delta Y_t|\,|\delta\Sigma_t|\,|Z_t^{\pi'}|\,\d t\\
 &\quad\le C\E\left[\left(
   \int_0^T|\delta Y_t|^2|\delta\Sigma_t|^2\,\d t
 \right)^{1/2}\right]\\
 &\quad\le C
 \left(\E\sup_{0\leq t\leq T}|\delta Y_t|^2\right)^{1/2}
 \mathcal D_h(\pi\mid\pi')^{1/2}.
\end{align*}
Applying the same argument on the product space, with $\widetilde Z^{\pi'}$
integrated against the copied Brownian motion, gives
\begin{align*}
 &\E\int_0^T |\delta Y_t|\,
 \left|\widetilde\E\left[
   (\delta\overline\Sigma_t)\trans\widetilde Z_t^{\pi'}
 \right]\right|\,\d t\\
 &\quad\le C
 \left(\E\sup_{0\leq t\leq T}|\delta Y_t|^2\right)^{1/2}
 \mathcal D_h(\pi\mid\pi')^{1/2}.
\end{align*}
Therefore, Young's inequality yields, for every $\varepsilon>0$,
\begin{equation}\label{eq:Rsigma-bmo}
 \E\int_0^T e^{\beta t}|\delta Y_t|\,|R_t^\sigma|\,\d t
 \le \varepsilon\E\left[\sup_{0\leq t\leq T}|\delta Y_t|^2\right]
 +C_\varepsilon\mathcal D_h(\pi\mid\pi').
\end{equation}

Now fix $\eta>0$. Substituting the decomposition
$\delta G=L+R^f+R^b+R^\sigma$ into
\eqref{eq:YZ-Ito-expectation}, and using
\eqref{eq:linear-generator-bound}, \eqref{eq:Rf-cross-term},
\eqref{eq:Rb-cross-term}, \eqref{eq:Rsigma-bmo}, and
\eqref{eq:stability:terminal:cond}, gives
\begin{align*}
 &\frac12\E\int_0^T e^{\beta t}|\delta Z_t|^2\,\d t
 +(\beta-C_0-2\eta)
   \E\int_0^T e^{\beta t}|\delta Y_t|^2\,\d t\\
 &\qquad\le
 C_{\eta,\beta,\varepsilon}\mathcal D_h(\pi\mid\pi')
 +2\varepsilon\E\left[
   \sup_{0\leq t\leq T}|\delta Y_t|^2
 \right].
\end{align*}
Choose $\beta\ge C_0+2\eta+1$. Since $e^{\beta t}\ge1$, the preceding
inequality, after renaming $\varepsilon$ and the constants, gives
\begin{equation}\label{eq:YZ-energy-bounded}
 \E\int_0^T\bigl(|\delta Y_t|^2+|\delta Z_t|^2\bigr)\,\d t
 \le C_\varepsilon\mathcal D_h(\pi\mid\pi')
 +\varepsilon \E\left[\sup_{0\leq t\leq T}|\delta Y_t|^2\right].
\end{equation}
We retain this estimate and derive a bound for the supremum before closing the
two inequalities.
For the supremum estimate, apply It\^o's formula to $|\delta Y|^2$ on
$[t,T]$:
\begin{equation}\label{eq:estimate:delta:Y:squared}
 |\delta Y_t|^2+\int_t^T|\delta Z_s|^2\,\d s
 =|\delta Y_T|^2
 +2\int_t^T\delta Y_s\trans\delta G_s\,\d s
 -2\int_t^T\delta Y_s\trans\delta Z_s\,\d W_s.
\end{equation}

We first control the finite-variation term. Since
\[
 \sup_{0\leq t\leq T}
 \left|\int_t^T\delta Y_s\trans\delta G_s\,\d s\right|
 \le \int_0^T|\delta Y_s\trans\delta G_s|\,\d s,
\]
the decomposition $\delta G=L+R^f+R^b+R^\sigma$, together with
\eqref{eq:linear-generator-bound}, \eqref{eq:Rf-cross-term},
\eqref{eq:Rb-cross-term}, and \eqref{eq:Rsigma-bmo}, applied with $\beta=0$,
yields, for every $\varepsilon>0$,
\begin{align}
 &2\E\left[
 \sup_{0\leq t\leq T}
 \left|\int_t^T\delta Y_s\trans\delta G_s\,\d s\right|
 \right]\nonumber\\
 &\qquad\le
 C\E\int_0^T\bigl(|\delta Y_s|^2+|\delta Z_s|^2\bigr)\,\d s
 +\varepsilon\E\left[\sup_{0\leq s\leq T}|\delta Y_s|^2\right]
 +C_\varepsilon\mathcal D_h(\pi\mid\pi').
 \label{eq:drift-supremum-bound}
\end{align}
Here $L$ contributes to the integral term, $R^f$ and $R^b$ contribute to
the integral and Bregman terms, and the BMO estimate for $R^\sigma$ produces
the supremum term together with another Bregman term.

For the martingale term in \eqref{eq:estimate:delta:Y:squared}, writing
$N_t:=\int_0^t\delta Y_s\trans\delta Z_s\,\d W_s$, then using $
 \sup_{0\leq t\leq T}|N_T-N_t|
 \le 2\sup_{0\leq t\leq T}|N_t|$, together with the Burkholder--Davis--Gundy inequality and Young's inequality gives
$$
 \E\left[\sup_{0\leq s\leq T}
 \left|\int_0^s\delta Y_t\trans\delta Z_t\,\d W_t\right|\right]
 \le \frac14\E\left[\sup_{0\leq t\leq T}|\delta Y_t|^2\right]
 +C\E\int_0^T|\delta Z_t|^2\,\d t.
$$
Taking the supremum over $t\in[0,T]$ in
\eqref{eq:estimate:delta:Y:squared}, and using
\eqref{eq:stability:terminal:cond}, \eqref{eq:drift-supremum-bound}, and the
preceding martingale estimate, gives
\[
 \E\left[\sup_{0\leq t\leq T}|\delta Y_t|^2\right]
 \le C_\varepsilon\mathcal D_h(\pi\mid\pi')
 +C\E\int_0^T\bigl(|\delta Y_t|^2+|\delta Z_t|^2\bigr)\,\d t
 +\varepsilon C\E\left[\sup_{0\leq t\leq T}|\delta Y_t|^2\right].
\]

To conclude, we combine this inequality with~\eqref{eq:YZ-energy-bounded} and choose
$\varepsilon$ sufficiently small so that the supremum terms is absorbed.
This proves simultaneously the desired estimates for
$\E[\sup_{0\leq t\leq T}|\delta Y_t|^2]$ and
$\E[\int_0^T|\delta Z_t|^2\,\d t]$ in case~(i).

\medskip
\noindent\emph{Case (ii): linear-growth affine regime.}
Under Assumption~\ref{ass:affine-dynamics},
\begin{align*}
 B_t^\pi=B_t^{\pi'}=B_t,\qquad
 &\Sigma_t^\pi=\Sigma_t^{\pi'}=\Sigma_t,\\
 \overline B_t^\pi=\overline B_t^{\pi'}=\overline B_t,\qquad
 &\overline\Sigma_t^\pi=\overline\Sigma_t^{\pi'}
   =\overline\Sigma_t.
\end{align*}
Therefore
\[
 R_t^b=R_t^\sigma=0.
\]
Thus $\delta G=L+R^f$. Using the estimate of the linear part
\eqref{eq:linear-generator-bound}, the estimate of the inhomogeneous part
\eqref{eq:Rf-cross-term}, and the terminal estimate
\eqref{eq:stability:terminal:cond} in~\eqref{eq:YZ-Ito-expectation}, we obtain,
for every $\eta>0$,
\[
 \frac12\E\int_0^T e^{\beta t}|\delta Z_t|^2\,\d t
 +(\beta-C_0-\eta)
   \E\int_0^T e^{\beta t}|\delta Y_t|^2\,\d t
 \le C_{\eta,\beta}\mathcal D_h(\pi\mid\pi').
\]
Choosing $\beta\ge C_0+\eta+1$ and using $e^{\beta t}\ge1$ gives
\begin{equation}\label{eq:YZ-energy-affine}
 \E\int_0^T\bigl(|\delta Y_t|^2+|\delta Z_t|^2\bigr)\,\d t
 \le C\mathcal D_h(\pi\mid\pi').
\end{equation}

For completeness, the finite-variation term in the supremum estimate satisfies
\[
 2\E\left[
 \sup_{0\leq t\leq T}
 \left|\int_t^T\delta Y_s\trans\delta G_s\,\d s\right|
 \right]
 \le C\E\int_0^T\bigl(|\delta Y_s|^2+|\delta Z_s|^2\bigr)\,\d s
 +C\mathcal D_h(\pi\mid\pi'),
\]
by~\eqref{eq:linear-generator-bound} and~\eqref{eq:Rf-cross-term} with
$\beta=0$. Applying the terminal estimate and the martingale estimate derived
in case~(i) to~\eqref{eq:estimate:delta:Y:squared}, and then using
\eqref{eq:YZ-energy-affine}, we obtain
\[
 \E\left[\sup_{0\leq t\leq T}|\delta Y_t|^2\right]
 \le C\mathcal D_h(\pi\mid\pi').
\]
Together with~\eqref{eq:YZ-energy-affine}, this proves the result in
case~(ii) and completes the proof.
\end{proof}

\subsection{Relative smoothness: proof of Theorem \ref{thm:relsmooth}}\label{sec:proof-relsmooth}
\begin{Lemma}[G\^ateaux derivative of $J^0$]\label{lem:gateaux}
Suppose Assumptions~\ref{ass:H1},~\ref{ass:H2},~\ref{ass:deriv} and
\ref{ass:measderiv} hold. Assume, in addition, that either
Assumption~\ref{ass:H3} or Assumption~\ref{ass:H3gen} holds.
Then, for all $\pi,\pi'\in\mathcal A_{\mathscr C}$ such that
$\mathcal D_h(\pi'\mid\pi)<\infty$,
\[
\d J^0(\pi)(\pi'-\pi):=\frac{\d}{\d\varepsilon}
J^0\bigl(\pi+\varepsilon(\pi'-\pi)\bigr)
\bigg|_{\varepsilon=0}
=
\E\int_0^T\int_A
\frac{\delta H_t^0}{\delta m}
\bigl(\Theta_t(\pi),\pi_t,a\bigr)
(\pi'_t-\pi_t)(\d a)\,\d t,
\]
where
\[
\Theta_t(\pi)
=
\bigl(
X_t^{\pi},
\mathcal L(X_t^{\pi}),
Y_t^{\pi},
Z_t^{\pi}
\bigr),
\]
and
\begin{align*}
\frac{\delta H_t^0}{\delta m}
\bigl(\Theta_t(\pi),\pi_t,a\bigr)
&=
\frac{\delta b_t}{\delta m}
\bigl(
X_t^{\pi},
\mathcal L(X_t^{\pi}),
\pi_t,a
\bigr)^{\trans}
Y_t^{\pi}
\\
&\quad
+
\operatorname{tr}\left(
\frac{\delta\sigma_t}{\delta m}
\bigl(
X_t^{\pi},
\mathcal L(X_t^{\pi}),
\pi_t,a
\bigr)^{\trans}
Z_t^{\pi}
\right)
\\
&\quad
+
\frac{\delta f_t}{\delta m}
\bigl(
X_t^{\pi},
\mathcal L(X_t^{\pi}),
\pi_t,a
\bigr).
\end{align*}
\end{Lemma}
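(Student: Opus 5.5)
The plan is the classical variational (Pontryagin) argument: linearise the state along the segment $\pi^\varepsilon:=\pi+\varepsilon(\pi'-\pi)$ and then use the adjoint BSDE~\eqref{eq:adjoint-mkv-full} to remove the state variation by duality. Since $\mathscr C$ is convex, $\pi^\varepsilon\in\mathcal A_{\mathscr C}$; for the Bregman bounds one uses convexity of $D_h(\cdot\mid\pi_t)$, which gives $D_h(\pi^\varepsilon_t\mid\pi_t)\le\varepsilon D_h(\pi'_t\mid\pi_t)$.

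\textbf{Step 1: the first-order variation process.} Introduce the linear McKean--Vlasov SDE
\[
\d V_t=\Bigl[D_xb_t\,V_t+\widetilde\E\bigl[\partial_\mu b_t(\widetilde X_t^\pi)\widetilde V_t\bigr]+\int_A\tfrac{\delta b_t}{\delta m}(\pi'_t-\pi_t)(\d a)\Bigr]\d t+\bigl[\text{same for }\sigma\bigr]\d W_t,\qquad V_0=0,
\]
with all coefficients evaluated along $(X_t^\pi,\mu_t^\pi,\pi_t)$. Its well-posedness in $S^2$ follows from the bounded state and law derivatives, which hold in both regimes. The inhomogeneous terms are square integrable by Assumption~\ref{ass:measderiv} and $\mathcal D_h(\pi'\mid\pi)<\infty$. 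Next show that $\E\sup_t|(X^{\pi^\varepsilon}_t-X^\pi_t)/\varepsilon-V_t|^2\to0$. To do this, write the difference quotient of each coefficient via mean-value integrals in $x$ and $\mu$; for the Lions derivative use the interpolation $X^\pi+\theta(X^{\pi^\varepsilon}-X^\pi)$. In $m$, use $\varphi(m_\varepsilon)-\varphi(m)=\varepsilon\int_0^1\int_A\frac{\delta\varphi}{\delta m}(m_{s\varepsilon},a)(\pi'-\pi)(\d a)\,\d s$.

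The error terms are handled as follows:
\begin{enumerate}
\item For the state and law terms, use the continuity in Assumption~\ref{ass:deriv} together with Lemma~\ref{lem:Xstab}, which gives $\E\sup|X^{\pi^\varepsilon}-X^\pi|^2\le C\varepsilon\mathcal D_h(\pi'\mid\pi)\to0$.
\item For the $m$-term, use the continuity in $s$ from Assumption~\ref{ass:measderiv}. Dominated convergence applies, with the bound $K D_h(\pi'_t\mid\pi_t)$ as integrable majorant.
\end{enumerate}
Gronwall's inequality then closes the estimate.

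\textbf{Step 2: differentiating the cost.} By the same expansion,
\[
\tfrac1\varepsilon\bigl(J^0(\pi^\varepsilon)-J^0(\pi)\bigr)\to\E\Bigl[\langle D_xg,V_T\rangle+\widetilde\E\langle\partial_\mu g(\widetilde X_T)\,\cdot\,\rangle+\int_0^T\bigl(D_xf\,V_t+\widetilde\E[\partial_\mu f\,\widetilde V_t]+\textstyle\int_A\frac{\delta f}{\delta m}(\pi'-\pi)\bigr)\d t\Bigr].
\]
In regime (ii) the cost derivatives grow linearly. The dominating functions are then products of $(1+|X|+M_2+M_{2,A})$ with difference quotients; these are $L^1$-convergent by Cauchy--Schwarz, using the $S^2$ bounds and the control-moment condition. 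This uniform-integrability bookkeeping, especially for the $\mu$-derivative terms with independent copies, is where I expect most of the technical care to lie.

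\textbf{Step 3: duality.} Apply It\^o's formula to $\langle Y^\pi_t,V_t\rangle$ on $[0,T]$. The drift of $Y$ contains $-D_xH^0$ and $-\widetilde\E[\partial_\mu H^0(\widetilde\Theta)(X)]$. After exchanging the roles of the copy and the original by Fubini on the product space, these cancel exactly the $x$- and $\mu$-linear terms of $\d V$ paired with $(Y,Z)$, as well as the $D_xf$ and $\partial_\mu f$ terms. What remains is
\[
\E\langle Y_T,V_T\rangle=\E\int_0^T\Bigl(\int_A\bigl[\tfrac{\delta b}{\delta m}^{\trans}Y+\operatorname{tr}(\tfrac{\delta\sigma}{\delta m}^{\trans}Z)\bigr](\pi'-\pi)(\d a)-D_xf\,V-\widetilde\E[\partial_\mu f\,\widetilde V]\Bigr)\d t.
\]
Since $Y_T$ is exactly the $g$-derivative pairing, substituting this into Step 2 gives the claimed formula. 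The local martingale parts vanish in expectation because $Y,V\in S^2$ and $Z\in H^2$, which implies $\mathcal H^1$ by BDG exactly as in the proof of Lemma~\ref{lem:YZlipc}. The main obstacle is therefore Step 1/2's limit passage in regime (ii), not the duality identity.
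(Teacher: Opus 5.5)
Your proposal is correct and follows essentially the same route as the paper. The common steps are the linearised McKean--Vlasov process $V$ and the split of each coefficient increment into two parts: a state--law part, interpolated along $X^\pi+\lambda(X^{\pi^\varepsilon}-X^\pi)$ and controlled by Assumption~\ref{ass:deriv} and Lemma~\ref{lem:Xstab}, and a control part, handled by continuity along segments and dominated convergence with a majorant proportional to $D_h(\pi'_t\mid\pi_t)$. Then come uniform $L^2$ bounds on the cost derivatives along the segment combined with Cauchy--Schwarz, and It\^o duality for $\langle Y^\pi,V\rangle$ with the independent-copy exchange identities. The one detail worth stating explicitly is that the $\mathcal H^1$ argument for the martingale part also needs the diffusion coefficient of $V$ to lie in $H^2$. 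This follows from $V\in S^2$, the bounded derivatives of $\sigma$, and Assumption~\ref{ass:measderiv}.
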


\begin{proof}
Fix $\pi,\pi'\in\mathcal A_{\mathscr C}$ with
$\mathcal D_h(\pi'\mid\pi)<\infty$, and set
\[
\delta\pi_t:=\pi'_t-\pi_t,
\qquad
\pi_t^\varepsilon:=\pi_t+\varepsilon\delta\pi_t,
\]
\[
X:=X^{\pi},
\qquad
\mu_t:=\mathcal L(X_t),
\qquad
X^\varepsilon:=X^{\pi^\varepsilon},
\qquad
\mu_t^\varepsilon:=\mathcal L(X_t^\varepsilon).
\]
By convexity of $\mathscr C$,
$\pi^\varepsilon\in\mathcal A_{\mathscr C}$ for every
$\varepsilon\in[0,1]$.
Moreover, convexity of $D_h(\cdot\mid\nu_0)$ and
Assumption~\ref{ass:H2} give
\[
 \mathcal D_h(\pi^\varepsilon\mid\nu_0)
 \le (1-\varepsilon)\mathcal D_h(\pi\mid\nu_0)
 +\varepsilon\mathcal D_h(\pi'\mid\nu_0)<\infty.
\]
Thus the state equation is well posed along the whole segment
$(\pi^\varepsilon)_{0\leq\varepsilon\leq1}$.

\medskip
\noindent\emph{Step 1: Linearized MKV SDE.}

Let $V$ be the unique strong solution to the following linear MKV SDE 
\begin{equation}\label{eq:Vsde}
\begin{aligned}
\d V_t
={}&
\Bigg[
(D_xb_t)(X_t,\mu_t,\pi_t)V_t +
\widetilde{\E}\left[
(\partial_\mu b_t)
(X_t,\mu_t,\pi_t)(\widetilde X_t)
\,\widetilde V_t
\right]
\\
&\qquad
+
\int_A
\frac{\delta b_t}{\delta m}
(X_t,\mu_t,\pi_t,a)
\,\delta\pi_t(\d a)
\Bigg]\d t
\\
&+
\Bigg[
(D_x\sigma_t)(X_t,\mu_t,\pi_t)V_t +
\widetilde{\E}\left[
(\partial_\mu\sigma_t)
(X_t,\mu_t,\pi_t)(\widetilde X_t)
\,\widetilde V_t
\right]
\\
&\qquad
+
\int_A
\frac{\delta\sigma_t}{\delta m}
(X_t,\mu_t,\pi_t,a)
\,\delta\pi_t(\d a)
\Bigg]\d W_t,
\qquad
V_0=0.
\end{aligned}
\end{equation}
Here as usual $(\widetilde X,\widetilde V)$ is an independent copy of
$(X,V)$.

Under either Assumption~\ref{ass:H3} or Assumption~\ref{ass:H3gen},
$D_xb, \, D_x\sigma, \, \partial_\mu b$ and $\partial_\mu\sigma$
are uniformly bounded. Moreover,
Assumption~\ref{ass:measderiv} gives
\begin{align*}
&
\left|
\int_A
\frac{\delta b_t}{\delta m}
(X_t,\mu_t,\pi_t,a)
\delta\pi_t(\d a)
\right|^2
\\
&\qquad+
\left|
\int_A
\frac{\delta\sigma_t}{\delta m}
(X_t,\mu_t,\pi_t,a)
\delta\pi_t(\d a)
\right|^2
\le
2K D_h(\pi'_t\mid\pi_t).
\end{align*}
Since $\mathcal{D}_h(\pi' \mid \pi )\,\d t<\infty$, standard estimates for linear McKean--Vlasov SDEs imply
\[
\E\left[
\sup_{0\le t\le T}|V_t|^2
\right]
<\infty.
\]

\medskip
\noindent\emph{Step 2: Differentiability of the state process.}

We claim that
\begin{equation}\label{eq:Vderivative}
\lim_{\varepsilon\downarrow0}
\E\left[
\sup_{0\le t\le T}
\left|
\frac{X_t^\varepsilon-X_t}{\varepsilon}
-
V_t
\right|^2
\right]
=0.
\end{equation}

Set
\[
V_t^\varepsilon
:=
\frac{X_t^\varepsilon-X_t}{\varepsilon}
-
V_t,
\qquad
\Delta X_t^\varepsilon
:=
X_t^\varepsilon-X_t.
\]
For $\lambda\in[0,1]$, define
\[
X_t^{\varepsilon,\lambda}
:=
X_t+\lambda\Delta X_t^\varepsilon, \quad \mbox{ and } \mu_t^{\varepsilon,\lambda}
:=
\mathcal L\bigl(X_t^{\varepsilon,\lambda}\bigr).
\]
In particular,
$\mu_t^{\varepsilon,0}=\mu_t$ and
$\mu_t^{\varepsilon,1}=\mu_t^\varepsilon$.

We decompose
\begin{align}
b_t
\bigl(
X_t^\varepsilon,
\mu_t^\varepsilon,
\pi_t^\varepsilon
\bigr)
-
b_t(X_t,\mu_t,\pi_t)
&=
\Big[
b_t
\bigl(
X_t^\varepsilon,
\mu_t^\varepsilon,
\pi_t^\varepsilon
\bigr)
-
b_t(X_t,\mu_t,\pi_t^\varepsilon)
\Big]
\nonumber\\
&\quad+
\Big[
b_t(X_t,\mu_t,\pi_t^\varepsilon)
-
b_t(X_t,\mu_t,\pi_t)
\Big].
\label{eq:b-split}
\end{align}

For the first bracket, we write
\begin{align}
& b_t
\bigl(
X_t^\varepsilon,
\mu_t^\varepsilon,
\pi_t^\varepsilon
\bigr)
-
b_t(X_t,\mu_t,\pi_t^\varepsilon)
\nonumber \\
& =
\int_0^1
(D_xb_t)
\bigl(
X_t^{\varepsilon,\lambda},
\mu_t^{\varepsilon,\lambda},
\pi_t^\varepsilon
\bigr)
\Delta X_t^\varepsilon
\,\d\lambda
\nonumber\\
&\quad+
\int_0^1
\widetilde{\E}\left[
(\partial_\mu b_t)
\bigl(
X_t^{\varepsilon,\lambda},
\mu_t^{\varepsilon,\lambda},
\pi_t^\varepsilon
\bigr)
\bigl(
\widetilde X_t^{\varepsilon,\lambda}
\bigr)
\Delta\widetilde X_t^\varepsilon
\right]
\,\d\lambda.
\label{eq:b-state-law-expansion}
\end{align}

For the second bracket,
\begin{align}
\frac{
b_t(X_t,\mu_t,\pi_t^\varepsilon)
-
b_t(X_t,\mu_t,\pi_t)
}{
\varepsilon
}
&=
\int_0^1\int_A
\frac{\delta b_t}{\delta m}
\bigl(
X_t,\mu_t,
\pi_t^{\lambda\varepsilon},
a
\bigr)
\delta\pi_t(\d a)
\,\d\lambda.
\label{eq:b-control-expansion}
\end{align}

Define
\[
A_t^{b,\varepsilon}
:=
\int_0^1
(D_xb_t)
\bigl(
X_t^{\varepsilon,\lambda},
\mu_t^{\varepsilon,\lambda},
\pi_t^\varepsilon
\bigr)
\,\d\lambda,
\]
and, for a copy variable $\widetilde U$,
\[
\mathcal B_t^{b,\varepsilon}(\widetilde U)
:=
\int_0^1
\widetilde{\E}\left[
(\partial_\mu b_t)
\bigl(
X_t^{\varepsilon,\lambda},
\mu_t^{\varepsilon,\lambda},
\pi_t^\varepsilon
\bigr)
\bigl(
\widetilde X_t^{\varepsilon,\lambda}
\bigr)
\widetilde U
\right]
\,\d\lambda.
\]
Also set
\[
Q_t^{b,\varepsilon}
:=
\int_0^1\int_A
\frac{\delta b_t}{\delta m}
\bigl(
X_t,\mu_t,
\pi_t^{\lambda\varepsilon},
a
\bigr)
\delta\pi_t(\d a)
\,\d\lambda,
\]
and
\[
Q_t^{b,0}
:=
\int_A
\frac{\delta b_t}{\delta m}
(X_t,\mu_t,\pi_t,a)
\delta\pi_t(\d a).
\]

Then, with the above notations, \eqref{eq:b-split} together with \eqref{eq:b-state-law-expansion} and \eqref{eq:b-control-expansion} gives
\begin{align}
&
\frac{
b_t
\bigl(
X_t^\varepsilon,
\mu_t^\varepsilon,
\pi_t^\varepsilon
\bigr)
-
b_t(X_t,\mu_t,\pi_t)
}{
\varepsilon
}
\nonumber\\
&\quad
-
(D_xb_t)(X_t,\mu_t,\pi_t)V_t
\nonumber\\
&\quad
-
\widetilde{\E}\left[
(\partial_\mu b_t)
(X_t,\mu_t,\pi_t)(\widetilde X_t)
\widetilde V_t
\right]
-
Q_t^{b,0}
\nonumber\\
&=
A_t^{b,\varepsilon}V_t^\varepsilon
+
\mathcal B_t^{b,\varepsilon}
\bigl(
\widetilde V_t^\varepsilon
\bigr)
+
R_t^{b,\varepsilon},
\label{eq:b-remainder}
\end{align}
where
\begin{align*}
R_t^{b,\varepsilon}
={}&
\Big[
A_t^{b,\varepsilon}
-
(D_xb_t)(X_t,\mu_t,\pi_t)
\Big]V_t
\\
&+
\int_0^1
\widetilde{\E}\Bigg[
\Bigg(
(\partial_\mu b_t)
\bigl(
X_t^{\varepsilon,\lambda},
\mu_t^{\varepsilon,\lambda},
\pi_t^\varepsilon
\bigr)
\bigl(
\widetilde X_t^{\varepsilon,\lambda}
\bigr)
\\
&\hspace{42mm}
-
(\partial_\mu b_t)
(X_t,\mu_t,\pi_t)
(\widetilde X_t)
\Bigg)
\widetilde V_t
\Bigg]
\,\d\lambda
\\
&+
Q_t^{b,\varepsilon}
-
Q_t^{b,0}.
\end{align*}

The same construction with $\sigma$ in place of $b$ yields
$A_t^{\sigma,\varepsilon}$,
$\mathcal B_t^{\sigma,\varepsilon}$,
$Q_t^{\sigma,\varepsilon}$, $Q_t^{\sigma,0}$ and
$R_t^{\sigma,\varepsilon}$.

Therefore
\begin{align*}
\d V_t^\varepsilon
={}&
\Big[
A_t^{b,\varepsilon}V_t^\varepsilon
+
\mathcal B_t^{b,\varepsilon}
(\widetilde V_t^\varepsilon)
+
R_t^{b,\varepsilon}
\Big]\d t
\\
&+
\Big[
A_t^{\sigma,\varepsilon}V_t^\varepsilon
+
\mathcal B_t^{\sigma,\varepsilon}
(\widetilde V_t^\varepsilon)
+
R_t^{\sigma,\varepsilon}
\Big]\d W_t.
\end{align*}

Since the linear coefficients are uniformly bounded, the BDG and
Gr\"onwall inequalities yield
\begin{equation}\label{eq:Vepsbound}
\E\left[
\sup_{0\le t\le T}
|V_t^\varepsilon|^2
\right]
\le
C
\E\int_0^T
\left(
|R_t^{b,\varepsilon}|^2
+
|R_t^{\sigma,\varepsilon}|^2
\right)
\d t.
\end{equation}

By Lemma~\ref{lem:Xstab},
\[
\E\left[
\sup_{0\le t\le T}
|X_t^\varepsilon-X_t|^2
\right]
\le
C\mathcal D_h(\pi^\varepsilon\mid\pi).
\]
Since $\pi \mapsto \mathcal{D}_h(\pi \mid\pi_t)$ is convex,
\[
\mathcal D_h(\pi^\varepsilon\mid\pi)
\le
\varepsilon\mathcal D_h(\pi'\mid\pi).
\]
Hence
\[
\E\left[
\sup_{0\le t\le T}
|X_t^\varepsilon-X_t|^2
\right]
\longrightarrow0.
\]

Moreover,
\[
W_2^2
\bigl(
\mu_t^{\varepsilon,\lambda},
\mu_t
\bigr)
\le
\E
\left|
X_t^{\varepsilon,\lambda}
-
X_t
\right|^2
\longrightarrow0.
\]

By Assumption~\ref{ass:deriv}, the first two components of
$R_t^{b,\varepsilon}$ and $R_t^{\sigma,\varepsilon}$
converge to zero in
$L^2(\Omega\times[0,T])$.

For the control remainder, fix $\varphi\in\{b,\sigma\}$ and set
\[
 \Gamma_t^\varphi(s)
 :=\int_A
 \frac{\delta\varphi_t}{\delta m}
 \bigl(X_t,\mu_t,\pi_t^s,a\bigr)
 \delta\pi_t(\d a),
 \qquad s\in[0,1].
\]
Then
\[
 Q_t^{\varphi,\varepsilon}
 =\int_0^1\Gamma_t^\varphi(\lambda\varepsilon)\,\d\lambda,
 \qquad
 Q_t^{\varphi,0}=\Gamma_t^\varphi(0).
\]
The continuity along line segments in Assumption~\ref{ass:measderiv}
therefore gives
$\Gamma_t^\varphi(\lambda\varepsilon)\to\Gamma_t^\varphi(0)$
for every $\lambda\in[0,1]$, almost everywhere in $(\omega,t)$. Moreover,
the first bound in that assumption yields
\[
 |\Gamma_t^\varphi(s)|^2
 \le K D_h(\pi'_t\mid\pi_t),
 \qquad s\in[0,1].
\]
Hence, by Jensen's inequality,
\[
 |Q_t^{\varphi,\varepsilon}-Q_t^{\varphi,0}|^2
 \le
 \int_0^1
 |\Gamma_t^\varphi(\lambda\varepsilon)-\Gamma_t^\varphi(0)|^2
 \,\d\lambda
 \le 4K D_h(\pi'_t\mid\pi_t).
\]
Since the last term is integrable, the dominated convergence theorem gives
\begin{equation}\label{eq:Q-control-remainder}
 \E\int_0^T
 |Q_t^{\varphi,\varepsilon}-Q_t^{\varphi,0}|^2\,\d t
 \longrightarrow0,
 \qquad \varphi\in\{b,\sigma\}.
\end{equation}
%Thus this part of the argument uses only the first-order flat derivative,
%its continuity along control segments, and its directional bound from
%Assumption~\ref{ass:measderiv}.

Consequently,
\[
\E\int_0^T
\left(
|R_t^{b,\varepsilon}|^2
+
|R_t^{\sigma,\varepsilon}|^2
\right)
\d t
\longrightarrow0.
\]
By~\eqref{eq:Vepsbound}, this proves
\eqref{eq:Vderivative}.

\medskip
\noindent\emph{Step 3: First variation of the cost.}

Write
\begin{align*}
&
f_t
\bigl(
X_t^\varepsilon,
\mu_t^\varepsilon,
\pi_t^\varepsilon
\bigr)
-
f_t(X_t,\mu_t,\pi_t)
\\
&=
\Big[
f_t
\bigl(
X_t^\varepsilon,
\mu_t^\varepsilon,
\pi_t^\varepsilon
\bigr)
-
f_t(X_t,\mu_t,\pi_t^\varepsilon)
\Big]
\\
&\quad+
\Big[
f_t(X_t,\mu_t,\pi_t^\varepsilon)
-
f_t(X_t,\mu_t,\pi_t)
\Big].
\end{align*}
The chain rule together with definition of the flat derivative give
\begin{align*}
&
\frac{
f_t
\bigl(
X_t^\varepsilon,
\mu_t^\varepsilon,
\pi_t^\varepsilon
\bigr)
-
f_t(X_t,\mu_t,\pi_t)
}{
\varepsilon
}
\\
&=
\int_0^1
\Bigg[
\left\langle
(D_xf_t)
\bigl(
X_t^{\varepsilon,\lambda},
\mu_t^{\varepsilon,\lambda},
\pi_t^\varepsilon
\bigr),
V_t^\varepsilon+V_t
\right\rangle
\\
&\qquad+
\widetilde{\E}\left[
\left\langle
(\partial_\mu f_t)
\bigl(
X_t^{\varepsilon,\lambda},
\mu_t^{\varepsilon,\lambda},
\pi_t^\varepsilon
\bigr)
\bigl(
\widetilde X_t^{\varepsilon,\lambda}
\bigr),
\widetilde V_t^\varepsilon+\widetilde V_t
\right\rangle
\right]
\Bigg]\d\lambda
\\
&\quad+
\int_0^1\int_A
\frac{\delta f_t}{\delta m}
\bigl(
X_t,\mu_t,
\pi_t^{\lambda\varepsilon},
a
\bigr)
\delta\pi_t(\d a)
\,\d\lambda.
\end{align*}

By Step~2, $V^\varepsilon+V\to V$ in $S^2$ and, in particular,
there exists $\varepsilon_0>0$ such that
$\sup_{0<\varepsilon\leq\varepsilon_0}
\|V^\varepsilon+V\|_{S^2}<\infty$. 
%Moreover,
%Lemma~\ref{lem:Xstab}, together with
%$\mathcal D_h(\pi^\varepsilon\mid\pi)\leq
%\varepsilon\mathcal D_h(\pi'\mid\pi)$ imply
%\[
% \sup_{0\leq\varepsilon,\lambda\leq1}
% \E\left[\sup_{0\leq t\leq T}
% |X_t^{\varepsilon,\lambda}|^2\right]<\infty.
%\]
%In the linear-growth regime, we also have
%\[
% M_{2,A}(\pi_t^\varepsilon)^2
% =(1-\varepsilon)M_{2,A}(\pi_t)^2
% +\varepsilon M_{2,A}(\pi'_t)^2.
%\]
%Consequently, Assumption~\ref{ass:H3gen} gives the same square-integrability
%along the segment as the boundedness assumption~\ref{ass:H3}. More precisely,
Let
\[
 F_t^{\varepsilon,\lambda}
 :=D_xf_t(X_t^{\varepsilon,\lambda},
          \mu_t^{\varepsilon,\lambda},\pi_t^\varepsilon), \qquad
 \overline F_t^{\varepsilon,\lambda}
 :=(\partial_\mu f_t)(X_t^{\varepsilon,\lambda},
          \mu_t^{\varepsilon,\lambda},\pi_t^\varepsilon)
          (\widetilde X_t^{\varepsilon,\lambda}),
\]
then, in either regime,
\begin{equation}\label{eq:cost-derivatives-L2-segment}
 \sup_{0\leq\varepsilon,\lambda\leq1}
 \E\int_0^T\left(
 |F_t^{\varepsilon,\lambda}|^2
 +\widetilde\E|\overline F_t^{\varepsilon,\lambda}|^2
 \right)\,\d t<\infty.
\end{equation}
Assumption~\ref{ass:deriv}, Step~2, and
$\mathcal D_h(\pi^\varepsilon\mid\pi)\to0$ also yield
\begin{align*}
 &\int_0^1\E\int_0^T
 |F_t^{\varepsilon,\lambda}-F_t^{0,0}|^2\,\d t\,\d\lambda
 \longrightarrow0, \qquad \int_0^1\E\widetilde\E\int_0^T
 |\overline F_t^{\varepsilon,\lambda}
   -\overline F_t^{0,0}|^2\,\d t\,\d\lambda
 \longrightarrow0.
\end{align*}
Then, by Cauchy--Schwarz's inequality
\begin{align*}
 &\E\int_0^T\int_0^1
 \left|
 \langle F_t^{\varepsilon,\lambda},V_t^\varepsilon + V_t\rangle
 -\langle F_t^{0,0},V_t\rangle
 \right|\,\d\lambda\,\d t\\
 &\quad\le
 \left(\int_0^1\E\int_0^T
 |F_t^{\varepsilon,\lambda}|^2\,\d t\,\d\lambda\right)^{1/2}
 \|V^\varepsilon\|_{H^2}\\
 &\qquad+
 \left(\int_0^1\E\int_0^T
 |F_t^{\varepsilon,\lambda}-F_t^{0,0}|^2\,\d t\,\d\lambda
 \right)^{1/2}
 \|V\|_{H^2},
\end{align*}
which tends to zero as $\varepsilon$ goes to zero. The term involving $\partial_\mu f_t$ is treated in the
same way on the product space. Finally, applying the argument leading to
\eqref{eq:Q-control-remainder} with $\varphi=f$ shows that
\[
 \int_0^1\int_A
 \frac{\delta f_t}{\delta m}
 (X_t,\mu_t,\pi_t^{\lambda\varepsilon},a)
 \delta\pi_t(\d a)\,\d\lambda
 \longrightarrow
 \int_A\frac{\delta f_t}{\delta m}
 (X_t,\mu_t,\pi_t,a)\delta\pi_t(\d a)
\]
in $L^2(\Omega\times[0,T])$. We have therefore proved
\begin{align}
\frac{
f_t
\bigl(
X_t^\varepsilon,
\mu_t^\varepsilon,
\pi_t^\varepsilon
\bigr)
-
f_t(X_t,\mu_t,\pi_t)
}{
\varepsilon
}
&\longrightarrow
\langle
D_xf_t(X_t,\mu_t,\pi_t),
V_t
\rangle +
\widetilde{\E}\left[
\left\langle
(\partial_\mu f_t)
(X_t,\mu_t,\pi_t)
(\widetilde X_t),
\widetilde V_t
\right\rangle
\right]
\nonumber\\
&\quad+
\int_A
\frac{\delta f_t}{\delta m}
(X_t,\mu_t,\pi_t,a)
\delta\pi_t(\d a)
\label{eq:f-first-variation}
\end{align}
in $L^1(\Omega\times[0,T])$.

For the terminal cost, the chain rule gives
\begin{align*}
&\frac{g(X_T^\varepsilon,\mu_T^\varepsilon)
-g(X_T,\mu_T)}{\varepsilon}
\\
&\quad=\int_0^1\Bigg[
\left\langle
D_xg(X_T^{\varepsilon,\lambda},\mu_T^{\varepsilon,\lambda}),
V_T^\varepsilon+V_T
\right\rangle
\\
&\hspace{37mm}+
\widetilde\E\left[
\left\langle
(\partial_\mu g)(X_T^{\varepsilon,\lambda},
\mu_T^{\varepsilon,\lambda})(\widetilde X_T^{\varepsilon,\lambda}),
\widetilde V_T^\varepsilon+\widetilde V_T
\right\rangle
\right]\Bigg]\,\d\lambda.
\end{align*}
Set
\[
 G_T^{\varepsilon,\lambda}
 :=D_xg(X_T^{\varepsilon,\lambda},\mu_T^{\varepsilon,\lambda}),
 \qquad
 \overline G_T^{\varepsilon,\lambda}
 :=(\partial_\mu g)(X_T^{\varepsilon,\lambda},
 \mu_T^{\varepsilon,\lambda})(\widetilde X_T^{\varepsilon,\lambda}).
\]
Since
$\mathcal D_h(\pi^\varepsilon\mid\pi)
\leq\varepsilon\mathcal D_h(\pi'\mid\pi)$,
Lemma~\ref{lem:Xstab} implies
\[
 \sup_{0\leq\varepsilon,\lambda\leq1}
 \E|X_T^{\varepsilon,\lambda}|^2<\infty,
 \qquad
 \sup_{0\leq\varepsilon,\lambda\leq1}
 M_2(\mu_T^{\varepsilon,\lambda})^2<\infty.
\]
Consequently, the boundedness assumption~\ref{ass:H3}, or the linear-growth
bounds of Assumption~\ref{ass:H3gen}, respectively, gives
\begin{equation}\label{eq:terminal-derivatives-L2-segment}
 \sup_{0\leq\varepsilon,\lambda\leq1}
 \E\left[
 |G_T^{\varepsilon,\lambda}|^2
 +\widetilde\E|\overline G_T^{\varepsilon,\lambda}|^2
 \right]<\infty.
\end{equation}
Moreover, Assumption~\ref{ass:deriv} and the convergence established in
Step~2 yield
\begin{align}
 &\int_0^1\E
 |G_T^{\varepsilon,\lambda}-G_T^{0,0}|^2\,\d\lambda
 \longrightarrow0,
 \nonumber\\
 &\int_0^1\E\widetilde\E
 |\overline G_T^{\varepsilon,\lambda}
 -\overline G_T^{0,0}|^2\,\d\lambda
 \longrightarrow0.
 \label{eq:terminal-derivatives-L2-convergence}
\end{align}
More precisely, Assumption~\ref{ass:deriv} gives
\begin{align*}
 \E|G_T^{\varepsilon,\lambda}-G_T^{0,0}|^2
 &\le K\left(
 \E|X_T^{\varepsilon,\lambda}-X_T|^2
 +W_2(\mu_T^{\varepsilon,\lambda},\mu_T)^2
 \right),
 \\
 \E\widetilde\E
 |\overline G_T^{\varepsilon,\lambda}-\overline G_T^{0,0}|^2
 &\le K\left(
 \E|X_T^{\varepsilon,\lambda}-X_T|^2
 +W_2(\mu_T^{\varepsilon,\lambda},\mu_T)^2
 \right.
 \\
 &\hspace{30mm}\left.
 +\E\widetilde\E
 |\widetilde X_T^{\varepsilon,\lambda}-\widetilde X_T|^2
 \right),
\end{align*}
and all terms on the right-hand sides converge to zero by Step~2.

To conclude, the Cauchy--Schwarz inequality gives
\begin{align*}
&\E\int_0^1
\left|
\langle G_T^{\varepsilon,\lambda},V_T^\varepsilon+V_T\rangle
-\langle G_T^{0,0},V_T\rangle
\right|\,\d\lambda
\\
&\quad\leq
\left(\int_0^1\E|G_T^{\varepsilon,\lambda}|^2\,\d\lambda\right)^{1/2}
\|V_T^\varepsilon\|_{L^2}
+
\left(\int_0^1\E
|G_T^{\varepsilon,\lambda}-G_T^{0,0}|^2\,\d\lambda\right)^{1/2}
\|V_T\|_{L^2},
\end{align*}
and, using Jensen's inequality for the independent-copy expectation,
\begin{align*}
&\E\int_0^1
\left|
\widetilde\E\left[
\langle\overline G_T^{\varepsilon,\lambda},
\widetilde V_T^\varepsilon+\widetilde V_T\rangle
-\langle\overline G_T^{0,0},\widetilde V_T\rangle
\right]
\right|\,\d\lambda
\\
&\quad\leq
\left(\int_0^1\E\widetilde\E
|\overline G_T^{\varepsilon,\lambda}|^2\,\d\lambda\right)^{1/2}
\|V_T^\varepsilon\|_{L^2}
\\
&\qquad+
\left(\int_0^1\E\widetilde\E
|\overline G_T^{\varepsilon,\lambda}
-\overline G_T^{0,0}|^2\,\d\lambda\right)^{1/2}
\|V_T\|_{L^2}.
\end{align*}
By~\eqref{eq:Vderivative},
\eqref{eq:terminal-derivatives-L2-segment}, and
\eqref{eq:terminal-derivatives-L2-convergence}, both right-hand sides
converge to zero. Therefore,
\begin{align}
&
\frac{
g
\bigl(
X_T^\varepsilon,
\mu_T^\varepsilon
\bigr)
-
g(X_T,\mu_T)
}{
\varepsilon
}
\nonumber\\
&\longrightarrow
\langle
D_xg(X_T,\mu_T),
V_T
\rangle
+
\widetilde{\E}\left[
\left\langle
(\partial_\mu g)
(X_T,\mu_T)
(\widetilde X_T),
\widetilde V_T
\right\rangle
\right]
\label{eq:g-first-variation}
\end{align}
in $L^1(\Omega)$.

Consequently,
\begin{equation}\label{eq:Jderiv}
\begin{aligned}
\lim_{\varepsilon \downarrow 0} \frac{ J^0\bigl(\pi+\varepsilon(\pi'-\pi)\bigr) - J^0\bigl(\pi \bigr)} {\varepsilon}
={}&
\E\int_0^T
\Bigg[
\langle
D_xf_t,
V_t
\rangle
\\
&\qquad+
\widetilde{\E}\left[
\left\langle
(\partial_\mu f_t)
(X_t,\mu_t,\pi_t)
(\widetilde X_t),
\widetilde V_t
\right\rangle
\right]
\\
&\qquad+
\int_A
\frac{\delta f_t}{\delta m}
(X_t,\mu_t,\pi_t,a)
\delta\pi_t(\d a)
\Bigg]\d t
\\
&+
\E
\langle
D_xg(X_T,\mu_T),
V_T
\rangle
\\
&+
\E\widetilde{\E}\left[
\left\langle
(\partial_\mu g)
(X_T,\mu_T)
(\widetilde X_T),
\widetilde V_T
\right\rangle
\right].
\end{aligned}
\end{equation}

\medskip
\noindent\emph{Step 4: Duality with the adjoint equation.}

Let $(Y,Z)=(Y^{\pi},Z^{\pi})$ denote the adjoint process associated
with $\pi$. By Proposition~\ref{prop:wellposed}, $(Y,Z)\in S^2\times H^2$
in either regime. Define
\[
n_t^b
:=
\widetilde{\E}\left[
(\partial_\mu b_t)
(\widetilde X_t,\mu_t,\widetilde\pi_t)
(X_t)^{\trans}
\widetilde Y_t
\right],
\]
\[
n_t^\sigma
:=
\widetilde{\E}\left[
(\partial_\mu\sigma_t)
(\widetilde X_t,\mu_t,\widetilde\pi_t)
(X_t)^{\trans}
\widetilde Z_t
\right],
\]
\[
n_t^f
:=
\widetilde{\E}\left[
(\partial_\mu f_t)
(\widetilde X_t,\mu_t,\widetilde\pi_t)
(X_t)
\right],
\]
and
\[
n_T^g
:=
\widetilde{\E}\left[
(\partial_\mu g)
(\widetilde X_T,\mu_T)
(X_T)
\right].
\]

Then the adjoint BSDE reads
\begin{equation}\label{eq:adjoint-gateaux}
\begin{cases}
\d Y_t
=
-\Big[
(D_xb_t)^{\trans}Y_t
+
(D_x\sigma_t)^{\trans}Z_t
+
D_xf_t
+
n_t^b
+
n_t^\sigma
+
n_t^f
\Big]\d t
+
Z_t\,\d W_t,
\\[1mm]
Y_T
=
D_xg(X_T,\mu_T)
+
n_T^g.
\end{cases}
\end{equation}

We now use the independent-copy identities. By Fubini's theorem
and exchange of $(\omega,\widetilde\omega)$,
\begin{align}
&
\E\left\langle
Y_t,
\widetilde{\E}\left[
(\partial_\mu b_t)
(X_t,\mu_t,\pi_t)
(\widetilde X_t)
\widetilde V_t
\right]
\right\rangle
\nonumber\\
&=
\E\widetilde{\E}\left[
\left\langle
(\partial_\mu b_t)
(X_t,\mu_t,\pi_t)
(\widetilde X_t)^{\trans}
Y_t,
\widetilde V_t
\right\rangle
\right]
\nonumber\\
&=
\E\widetilde{\E}\left[
\left\langle
(\partial_\mu b_t)
(\widetilde X_t,\mu_t,\widetilde\pi_t)
(X_t)^{\trans}
\widetilde Y_t,
V_t
\right\rangle
\right]
\nonumber\\
&=
\E\langle n_t^b,V_t\rangle.
\label{eq:copy-b}
\end{align}
Similarly,
\begin{equation}\label{eq:copy-sigma}
\E\left\langle
Z_t,
\widetilde{\E}\left[
(\partial_\mu\sigma_t)
(X_t,\mu_t,\pi_t)
(\widetilde X_t)
\widetilde V_t
\right]
\right\rangle_F
=
\E\langle n_t^\sigma,V_t\rangle,
\end{equation}

\begin{equation}\label{eq:copy-f}
\E\widetilde{\E}\left[
\left\langle
(\partial_\mu f_t)
(X_t,\mu_t,\pi_t)
(\widetilde X_t),
\widetilde V_t
\right\rangle
\right]
=
\E\langle n_t^f,V_t\rangle,
\end{equation}
and,
\begin{equation}\label{eq:copy-g}
\E\widetilde{\E}\left[
\left\langle
(\partial_\mu g)
(X_T,\mu_T)
(\widetilde X_T),
\widetilde V_T
\right\rangle
\right]
=
\E\langle n_T^g,V_T\rangle.
\end{equation}

To justify the duality argument, let $\Gamma^V$ denote the diffusion
coefficient in~\eqref{eq:Vsde}:
\[
 \Gamma_t^V
 :=(D_x\sigma_t)(X_t,\mu_t,\pi_t)V_t
 +\widetilde\E\left[
   (\partial_\mu\sigma_t)(X_t,\mu_t,\pi_t)(\widetilde X_t)
   \widetilde V_t
 \right]
 +Q_t^{\sigma,0}.
\]
The boundedness of the derivatives of $\sigma$, the fact that $V\in S^2$,
and the first bound in Assumption~\ref{ass:measderiv} imply that
$\Gamma^V\in H^2$ which in turn yields
\begin{align*}
 &\E\left[\left(\int_0^T|Y_t|^2|\Gamma_t^V|^2\,\d t\right)^{1/2}\right]
 \le \|Y\|_{S^2}\|\Gamma^V\|_{H^2}<\infty.
 \end{align*}
 
 Moreover, it holds
 \begin{align*}
 &\E\left[\left(\int_0^T|V_t|^2|Z_t|^2\,\d t\right)^{1/2}\right]
 \le \|V\|_{S^2}\|Z\|_{H^2}<\infty.
\end{align*}
Hence, the local martingale in It\^o's formula for
$\langle Y_t,V_t\rangle$ is an $\mathcal H^1$-martingale and has zero
expectation. Applying that formula and using the independent-copy identities
above yields
\begin{align}
\E\langle Y_T,V_T\rangle
={}&
\E\int_0^T
\Bigg[
-
\langle
D_xf_t+n_t^f,
V_t
\rangle
\nonumber\\
&\qquad+
\left\langle
Y_t,
\int_A
\frac{\delta b_t}{\delta m}
(X_t,\mu_t,\pi_t,a)
\delta\pi_t(\d a)
\right\rangle
\nonumber\\
&\qquad+
\left\langle
Z_t,
\int_A
\frac{\delta\sigma_t}{\delta m}
(X_t,\mu_t,\pi_t,a)
\delta\pi_t(\d a)
\right\rangle_F
\Bigg]\d t.
\label{eq:itoYV}
\end{align}

By the terminal condition and~\eqref{eq:copy-g},
\begin{align*}
&
\E\langle
D_xg(X_T,\mu_T),
V_T
\rangle+
\E\widetilde{\E}\left[
\left\langle
(\partial_\mu g)
(X_T,\mu_T)
(\widetilde X_T),
\widetilde V_T
\right\rangle
\right]
=
\E\langle Y_T,V_T\rangle.
\end{align*}

Substituting~\eqref{eq:itoYV} into~\eqref{eq:Jderiv} and using
\eqref{eq:copy-f}, all terms involving $V$ cancel. Hence
\begin{align*}
\frac{\d}{\d\varepsilon}
J^0\bigl(\pi+\varepsilon(\pi'-\pi)\bigr)
\bigg|_{\varepsilon=0}
={}&
\E\int_0^T\int_A
\Bigg[
\frac{\delta b_t}{\delta m}
(X_t,\mu_t,\pi_t,a)^{\trans}
Y_t +
\operatorname{tr}\left(
\frac{\delta\sigma_t}{\delta m}
(X_t,\mu_t,\pi_t,a)^{\trans}
Z_t
\right)
\\
&\qquad+
\frac{\delta f_t}{\delta m}
(X_t,\mu_t,\pi_t,a)
\Bigg]
\delta\pi_t(\d a)
\,\d t \\
& =
\E\int_0^T\int_A
\frac{\delta H_t^0}{\delta m}
\bigl(
\Theta_t(\pi),
\pi_t,a
\bigr)
(\pi'_t-\pi_t)(\d a)
\,\d t.
\end{align*}
This completes the proof.
\end{proof}

\begin{Lemma}[Regularity of the Hamiltonian gradient]\label{lem:reg}
Suppose Assumptions~\ref{ass:H1},~\ref{ass:H2},~\ref{ass:deriv},
\ref{ass:measderiv} and~\ref{ass:relative-smoothness-regularity} hold. Assume,
in addition, that one of the following two alternative regimes is satisfied:
\begin{enumerate}
\item[(i)] \emph{Bounded-cost regime:}
Assumption~\ref{ass:H3} holds;
\item[(ii)] \emph{Linear-growth affine regime:}
Assumptions~\ref{ass:H3gen} and~\ref{ass:affine-dynamics} hold, and, for every
$(t,x,\mu)$, the map $m\mapsto b_t(x,\mu,m)$ is affine on $\mathscr C$ or equivalently $\mathcal R_t^b(x,\mu;m'\mid m)=0$ for all $m,m'\in\mathscr C$ with the same notations as in Assumption~\ref{ass:relative-smoothness-regularity}.

\end{enumerate}
Then there exists a constant $C<\infty$ such that, for all $\pi,\pi'\in\AC$ with
$\mathcal D_h(\pi'\mid\pi)<\infty$, setting
$\pi^\varepsilon=(1-\varepsilon)\pi+\varepsilon\pi'$ for
$\varepsilon\in[0,1]$, we have
\begin{align*}
\Bigg|\E\int_0^1\int_0^T\int_A
&\left[
\frac{\delta H^0_t}{\delta m}
 \bigl(\Theta_t^{\pi^\varepsilon},\pi_t^\varepsilon,a\bigr)
-\frac{\delta H^0_t}{\delta m}
 \bigl(\Theta_t^\pi,\pi_t,a\bigr)
\right]
(\pi'_t-\pi_t)(\d a)\,\d t\,\d\varepsilon\Bigg|
\le C\mathcal D_h(\pi'\mid\pi).
\end{align*}
\end{Lemma}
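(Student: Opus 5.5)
We plan to fix $\varepsilon\in[0,1]$ and bound the inner time–action integral by $C\varepsilon^{0}\mathcal D_h(\pi'\mid\pi)$, then integrate in $\varepsilon$. Write $q_t:=\pi'_t-\pi_t$ and note $\pi_t^\varepsilon-\pi_t=\varepsilon q_t$. Convexity of $D_h(\cdot\mid\pi_t)$ gives $\mathcal D_h(\pi^\varepsilon\mid\pi)\le\varepsilon\mathcal D_h(\pi'\mid\pi)$. Lemma~\ref{lem:Xstab} then yields $\E\sup_t|X^{\pi^\varepsilon}_t-X^\pi_t|^2+\sup_tW_2(\mu_t^{\pi^\varepsilon},\mu_t^\pi)^2\le C\mathcal D_h(\pi'\mid\pi)$, and Lemma~\ref{lem:YZlipc} (valid in both regimes) yields the same bound for $\|Y^{\pi^\varepsilon}-Y^\pi\|_{S^2}^2+\|Z^{\pi^\varepsilon}-Z^\pi\|_{H^2}^2$.

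Expanding $\delta H^0/\delta m$ through Lemma~\ref{lem:gateaux}, we split the difference for $\varphi\in\{b,\sigma,f\}$ into three pieces. Piece (a) is the change in the adjoint at fixed coefficients, $\int_A\frac{\delta b_t}{\delta m}(X^{\pi^\varepsilon}_t,\mu^{\pi^\varepsilon}_t,\pi^\varepsilon_t,a)q_t(\d a)^{\trans}(Y^{\pi^\varepsilon}_t-Y^\pi_t)$, together with its $\sigma/Z$ analogue. Piece (b) is the change in the state and law at fixed control $\pi^\varepsilon_t$, multiplied by $Y^\pi$, $Z^\pi$ or $1$. Piece (c) is the change in the control from $\pi_t$ to $\pi^\varepsilon_t$ at fixed $(X^\pi_t,\mu^\pi_t)$, multiplied by $Y^\pi$, $Z^\pi$ or $1$.

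For piece (a) we apply Cauchy–Schwarz: Assumption~\ref{ass:measderiv} bounds the directional derivative by $(K D_h(\pi'_t\mid\pi_t))^{1/2}$, and Lemma~\ref{lem:YZlipc} handles the adjoint increment, giving $C\mathcal D_h(\pi'\mid\pi)$. For piece (b) the first bound of Assumption~\ref{ass:relative-smoothness-regularity} gives a factor $(|\delta X_t|^2+W_2^2)^{1/2}D_h(\pi'_t\mid\pi_t)^{1/2}$. In the $f$-term we close with Cauchy–Schwarz and Lemma~\ref{lem:Xstab}. In the $b,\sigma$-terms we multiply by $Y^\pi,Z^\pi$. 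In regime (i) we use $\|Y^\pi\|_{H^\infty}$ and, for $Z^\pi$, the BMO bound of Lemma~\ref{lem:Ybound} via Lemma~\ref{lem:bmoL1L2}, exactly as for $R^\sigma$ in the proof of Lemma~\ref{lem:YZlipc}. The BMO step gives $\E[(\int|\delta X_t|^2D_h\,\d t)^{1/2}]\le(\E\sup|\delta X|^2)^{1/2}\mathcal D_h^{1/2}$. In regime (ii) the $b$-contribution is handled differently: affinity of $b$ in $m$ together with Assumption~\ref{ass:affine-dynamics} forces $\int\frac{\delta b_t}{\delta m}q_t(\d a)=b_t(x,\mu,\pi'_t)-b_t(x,\mu,\pi_t)$. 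This difference is independent of $(x,\mu)$, since the $x$- and $\mu$-derivatives of $b$ do not depend on $m$, so the $b$-part of piece (b) vanishes. The analogous argument applies to $\sigma$, because Assumption~\ref{ass:relative-smoothness-regularity} already makes $\sigma$ affine in $m$.

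For piece (c) we use the identity $\int_0^1\int_A[\frac{\delta\varphi}{\delta m}(\pi^\varepsilon_t)-\frac{\delta\varphi}{\delta m}(\pi_t)]q_t\,\d\varepsilon=\mathcal R^\varphi_t(\pi'_t\mid\pi_t)$, after which we may integrate in $\varepsilon$ first. This identity follows from the continuity along segments in Assumption~\ref{ass:measderiv} and the fundamental theorem of calculus applied to $s\mapsto\varphi_t(\pi^s_t)$. Piece (c) thus reduces to $\E\int_0^T[\langle Y^\pi_t,\mathcal R^b_t\rangle+\langle Z^\pi_t,\mathcal R^\sigma_t\rangle+\mathcal R^f_t]\,\d t$. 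Here $\mathcal R^\sigma=0$, and $|\mathcal R^f|\le KD_h$. In regime (i), $|\mathcal R^b|\le KD_h$ combined with the bound $\|Y^\pi\|_{H^\infty}\le M$ handles the $b$-term, while in regime (ii) $\mathcal R^b=0$. This step explains why the affinity hypotheses are needed: there is no $L^\infty$ control of $Y^\pi$ in regime (ii), and $Z$ is never bounded.

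The main obstacle is the $Z^\pi$-weighted part of piece (b) in regime (i). That is why we must keep $\sup_t|\delta X_t|$ inside, rather than an $L^2(\d t)$ norm, before invoking Fefferman. The second subtle point is the fundamental-theorem-of-calculus identity in piece (c), which needs $\varepsilon$-integrability; this is dominated by the directional bound $K D_h(\pi'_t\mid\pi_t)$. Summing all pieces gives the stated constant $C$, uniform in $\pi,\pi'$.
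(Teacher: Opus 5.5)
Your proposal is correct and follows essentially the same route as the paper: your pieces (a) and (b) together are the paper's $I_1$ (split into $J_1,\dots,J_4$), and your piece (c) is exactly the paper's $I_2$, which is reduced to $\mathcal R^b,\mathcal R^\sigma,\mathcal R^f$ by the fundamental theorem of calculus. The differences are cosmetic. In (a) you freeze the coefficient at the perturbed state and multiply by $Y^\pi,Z^\pi$ in (b), whereas the paper freezes at the base state and multiplies by $Y^{\pi^\varepsilon},Z^{\pi^\varepsilon}$. In regime (ii) you deduce the $(x,\mu)$-independence of the directional flat derivatives from affinity in $m$ plus Assumption~\ref{ass:affine-dynamics}, while the paper uses Assumption~\ref{ass:affine-dynamics} alone.
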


\begin{proof}
Write $\mu_t:=\mathcal L(X_t^\pi)$,
$\mu_t^\varepsilon:=\mathcal L(X_t^{\pi^\varepsilon})$, and
$q_t:=\pi'_t-\pi_t$. The expression inside the absolute value is
$I_1+I_2$, where
\begin{align*}
 I_1&:=\E\int_0^1\int_0^T\int_A
 \left[
 \frac{\delta H_t^0}{\delta m}
  (\Theta_t^{\pi^\varepsilon},\pi_t^\varepsilon,a)
 -\frac{\delta H_t^0}{\delta m}
  (\Theta_t^\pi,\pi_t^\varepsilon,a)
 \right]q_t(\d a)\,\d t\,\d\varepsilon,\\
 I_2&:=\E\int_0^1\int_0^T\int_A
 \left[
 \frac{\delta H_t^0}{\delta m}
  (\Theta_t^\pi,\pi_t^\varepsilon,a)
 -\frac{\delta H_t^0}{\delta m}
  (\Theta_t^\pi,\pi_t,a)
 \right]q_t(\d a)\,\d t\,\d\varepsilon.
\end{align*}

\noindent\emph{Common estimate for the state-adjoint contribution.}
For $\varphi\in\{b,\sigma,f\}$, set
\[
 \Delta_t^{\varphi,\varepsilon}
 :=\int_A\left[
 \frac{\delta\varphi_t}{\delta m}
  (X_t^{\pi^\varepsilon},\mu_t^\varepsilon,
   \pi_t^\varepsilon,a)
 -\frac{\delta\varphi_t}{\delta m}
  (X_t^\pi,\mu_t,\pi_t^\varepsilon,a)
 \right]q_t(\d a).
\]
Assumption~\ref{ass:relative-smoothness-regularity} gives
\begin{equation}\label{eq:delta-control-derivative-state-law}
 |\Delta_s^{\varphi,\varepsilon}|^2
 \le K\left(
 |X_s^{\pi^\varepsilon}-X_s^\pi|^2
 +W_2(\mu_s^\varepsilon,\mu_s)^2
 \right)D_h(\pi'_s\mid\pi_s).
\end{equation}
Expanding the Hamiltonian derivative and adding and subtracting the terms
evaluated at $(X_s^\pi,\mu_s,\pi_s^\varepsilon)$ gives
$|I_1|\le |J_1|+J_2+J_3+J_4$, where
\begin{align*}
 J_1&:=\int_0^1\E\int_0^T\int_A
 \Bigg[
 \frac{\delta b_s}{\delta m}
  (X_s^\pi,\mu_s,\pi_s^\varepsilon,a)^{\trans}
  (Y_s^{\pi^\varepsilon}-Y_s^\pi)\\
 &\hspace{31mm}
 +\operatorname{tr}\!\left(
 \frac{\delta\sigma_s}{\delta m}
  (X_s^\pi,\mu_s,\pi_s^\varepsilon,a)^{\trans}
  (Z_s^{\pi^\varepsilon}-Z_s^\pi)
 \right)\Bigg]q_s(\d a)\,\d s\,\d\varepsilon,\\
 J_2&:=\int_0^1\E\int_0^T
 |Y_s^{\pi^\varepsilon}|\,|\Delta_s^{b,\varepsilon}|
 \,\d s\,\d\varepsilon,\\
 J_3&:=\int_0^1\E\int_0^T
 |Z_s^{\pi^\varepsilon}|\,|\Delta_s^{\sigma,\varepsilon}|
 \,\d s\,\d\varepsilon,\\
 J_4&:=\int_0^1\E\int_0^T
 |\Delta_s^{f,\varepsilon}|\,\d s\,\d\varepsilon.
\end{align*}

Set $\mathcal D:=\mathcal D_h(\pi'\mid\pi)$. By
Lemma~\ref{lem:YZlipc}, Assumption~\ref{ass:measderiv}, and the convexity of
$D_h(\cdot\mid\pi_s)$,
\begin{align}
 |J_1|
 &\le C\int_0^1
 \left(\E\int_0^T D_h(\pi_s^\varepsilon\mid\pi_s)\,\d s\right)^{1/2}
 \mathcal D^{1/2}\,\d\varepsilon
 \le C\mathcal D,
 \label{eq:J1-two-regimes}
\end{align}
\noindent where we used the fact that
$D_h(\pi_s^\varepsilon\mid\pi_s)
\le\varepsilon D_h(\pi'_s\mid\pi_s)$. Moreover, Lemma~\ref{lem:Xstab}
and the canonical coupling of $\mu_s^\varepsilon$ and $\mu_s$ give
\begin{equation}\label{eq:state-segment-bound-reg}
 \E\left[\sup_{0\leq s\leq T}
 |X_s^{\pi^\varepsilon}-X_s^\pi|^2\right]
 +\sup_{0\leq s\leq T}W_2(\mu_s^\varepsilon,\mu_s)^2
 \le C\varepsilon \mathcal D_h(\pi'\mid\pi).
\end{equation}
In particular, the Cauchy--Schwarz inequality,
\eqref{eq:delta-control-derivative-state-law}, and
\eqref{eq:state-segment-bound-reg} imply, in both regimes, that
\begin{equation}\label{eq:J4-two-regimes}
 J_4\le C\int_0^1\varepsilon^{1/2}\,\d\varepsilon\,\mathcal D_h(\pi'\mid\pi)
 \le C\mathcal D_h(\pi'\mid\pi).
\end{equation}

\smallskip
\noindent\emph{Regime (i).}
Lemma~\ref{lem:Ybound} gives uniform bounds for
$Y^{\pi^\varepsilon}$ in $H^\infty$ and for
$Z^{\pi^\varepsilon}\cdot W$ in BMO. For $J_2$, the Cauchy--Schwarz inequality,
\eqref{eq:delta-control-derivative-state-law}, and
\eqref{eq:state-segment-bound-reg} give
\begin{align*}
 J_2
 &\le C\int_0^1
 \left(\E\int_0^T
 \bigl(|X_s^{\pi^\varepsilon}-X_s^\pi|^2
 +W_2(\mu_s^\varepsilon,\mu_s)^2\bigr)\,\d s\right)^{1/2}
 \mathcal D^{1/2}\,\d\varepsilon\\
 &\le C\int_0^1\varepsilon^{1/2}\,\d\varepsilon\,\mathcal D_h(\pi'\mid\pi)
 \le C \mathcal D_h(\pi'\mid\pi).
\end{align*}
For $J_3$, Fefferman's inequality first gives
\begin{align*}
 J_3
 &\le C\int_0^1
 \|Z^{\pi^\varepsilon}\cdot W\|_{\mathrm{BMO}}
 \E\left[\left(\int_0^T
 |\Delta_s^{\sigma,\varepsilon}|^2\,\d s\right)^{1/2}\right]
 \d\varepsilon\\
 &\le C\int_0^1
 \left(\E\left[\sup_{0\le s\le T}
 \bigl(|X_s^{\pi^\varepsilon}-X_s^\pi|^2
 +W_2(\mu_s^\varepsilon,\mu_s)^2\bigr)\right]\right)^{1/2}
 \mathcal D_h(\pi'\mid\pi)^{1/2}\,\d\varepsilon,
\end{align*}
where the second inequality follows from
\eqref{eq:delta-control-derivative-state-law} and Cauchy--Schwarz. Hence,
by~\eqref{eq:state-segment-bound-reg},
\[
 J_3\le C\int_0^1\varepsilon^{1/2}\,\d\varepsilon\,D_h(\pi'\mid\pi)
 \le C\mathcal D_h(\pi'\mid\pi).
\]
Combining the previous estimates eventually gives $|I_1|\le C D_h(\pi'\mid\pi)$ in regime~(i).

\smallskip
\noindent\emph{Regime (ii).}
Assumption~\ref{ass:affine-dynamics} implies that, up to terms depending only
on $(t,m)$, the coefficients $b_t$ and $\sigma_t$ are affine in $x$ and in the
mean of $\mu$. Their flat derivatives with respect to $m$ are therefore
independent of $(x,\mu)$. Hence
\[
 \Delta_t^{b,\varepsilon}=\Delta_t^{\sigma,\varepsilon}=0,
 \qquad J_2=J_3=0.
\]
Estimates~\eqref{eq:J1-two-regimes} and~\eqref{eq:J4-two-regimes} again give
$|I_1|\le C \mathcal D_h(\pi'\mid\pi)$, now without using any $H^\infty$ or BMO estimate.

\medskip
\noindent\emph{Control-measure remainder.}
In $I_2$, the variables $(X_t^\pi,\mu_t,Y_t^\pi,Z_t^\pi)$ are fixed and
only the control-measure argument varies. The fundamental theorem of calculus
for flat derivatives and the definition of
$\mathcal R^\varphi$ therefore give the exact identity
\begin{align}
 I_2=\E\int_0^T\Big[&
 \mathcal R_t^b(X_t^\pi,\mu_t;\pi'_t\mid\pi_t)^{\trans}Y_t^\pi
 +\operatorname{tr}\!\left(
 \mathcal R_t^\sigma(X_t^\pi,\mu_t;\pi'_t\mid\pi_t)^{\trans}Z_t^\pi
 \right)\nonumber\\
 &+\mathcal R_t^f(X_t^\pi,\mu_t;\pi'_t\mid\pi_t)
 \Big]\,\d t.
 \label{eq:I2-finite-remainders}
\end{align}
By Assumption~\ref{ass:relative-smoothness-regularity},
$\mathcal R^\sigma=0$ in both regimes. In regime~(i), the uniform $H^\infty$-bound on
$Y^\pi$ and the bounds on $\mathcal R^b$ and $\mathcal R^f$ give
$|I_2|\le C \mathcal D_h(\pi'\mid\pi)$. In regime~(ii), the additional affinity of
$m\mapsto b_t(x,\mu,m)$ gives $\mathcal R^b=0$, while the bound on
$\mathcal R^f$ directly gives the same conclusion. The proof is now complete.
\end{proof}

We are now in position to prove Theorem \ref{thm:relsmooth}.

\begin{proof}
Fix $\pi, \pi' \in \mathcal{A}_{\mathscr{C}}$.
By Lemma \ref{lem:gateaux} and the fundamental theorem of calculus,
\[
  J^{\tau}(\pi')-J^{\tau}(\pi)= \int_0^1\E\!\int_0^T\!\int_A\frac{\delta H^0_t}{\delta m}\bigl(\Theta_t^{\pi^\eps},\pi^\eps_t,a\bigr)(\pi'_t-\pi_t)(\d a)\,\d t\d \eps +\tau \, \E \int_0^T [h(\pi'_t)-h(\pi_t)] \d t
\]
By the definition of $D_h$ 
\[
  \E \int_0^T [h(\pi'_t)-h(\pi_t)] \d t = \E\int_0^T\Bigl(D_h(\pi'_t|\pi_t) + \int_A \frac{\delta h}{\delta m}(\pi_t,a)(\pi'_t - \pi_t)(\d a)\Bigr)\d t
\]

\noindent It follows by Fubini's theorem and Lemma \ref{lem:reg} that:
\begin{align*}
    &J^{\tau}(\pi')-J^{\tau}(\pi)-\E\!\int_0^T\!\int_A\frac{\delta H^{\tau}_t}{\delta m}\bigl(\Theta_t^{\pi},\pi_t,a\bigr)(\pi'_t-\pi_t)(\d a)\,\d t\\
    &= \int_0^1 \E\int_0^T \int_A\Bigl(\frac{\delta H^0_t}{\delta m}(\Theta_t^{\pi^\eps},\pi^\eps_t,a)-\frac{\delta H_t^0}{\delta m}(\Theta_t^{\pi},\pi_t,a)\Bigr)(\pi'_t-\pi_t)(\d a)\, \d t\, \d \eps + \tau \, \E\int_0^T D_h(\pi'_t|\pi_t) \d t\\
    &\le L\E \int_0^T D_h(\pi'_t|\pi_t)\d t
\end{align*}

\end{proof}

\subsection{Proof of Theorem \ref{thm:dissipation}}\label{sec:proof-dissipation}

Applying Theorem~\ref{thm:relsmooth} for
$\pi'=\pi^{n+1}$ and $\pi=\pi^n$ yields
\[
  J^\tau(\pi^{n+1})-J^\tau(\pi^n)\le\E\!\int_0^T\!\int_A\frac{\delta H^\tau_t}{\delta m}(\Theta_t^{\pi^n},\pi^n_t, a)(\pi^{n+1}_t-\pi^n_t)(\d a)\,\d t
   +L\,\E\!\int_0^T\!D_h(\pi^{n+1}_t\mid\pi^n_t)\,\d t.
\]
On the other hand, by the optimality of $\pi^{n+1}$ in the mirror descent update~\eqref{eq:MDupdate}, for every $s\in[0,T]$,
\begin{align*}
  \int_A\frac{\delta H^\tau_t}{\delta m}&(\Theta_t^{\pi^n},\pi^n_t, a)(\pi^{n+1}_t-\pi^n_t)(\d a)+\lambda D_h(\pi^{n+1}_t\mid\pi^n_t)\\
  &\le\int_A\frac{\delta H^\tau_t}{\delta m}(\Theta_t^{\pi^n},\pi^n_t, a)(\pi^n_t-\pi^n_t)(\d a)+\lambda D_h(\pi^n_t\mid\pi^n_t)=0.
\end{align*}

Combining the two inequalities gives
\begin{equation}\label{eq:dissip}
  J^\tau(\pi^{n+1})-J^\tau(\pi^n)\le -(\lambda-L)\,\E\!\int_0^T\!D_h(\pi^{n+1}_t\mid\pi^n_t)\,\d t,
\end{equation}

\noindent which proves the first statement. Since $D_h\geq0$ and $\lambda\geq L$, this also proves the monotonicity of the sequence $\bigl(J^\tau(\pi^n)\bigr)_{n\geq0}$.

If $\lambda>L$ and $\inf_{\pi\in\AC}J^\tau(\pi)>-\infty$, then summing~\eqref{eq:dissip} from $n=0$ to $m-1$ gives
$$
(\lambda-L)
\sum_{n=0}^{m-1}
\E\Big[\int_0^T
D_h(\pi^{n+1}_s\mid\pi^n_s) \, \d s \Big]
\leq
J^\tau(\pi^0)-J^\tau(\pi^m)
\leq
J^\tau(\pi^0)-\inf{\pi\in\AC}J^\tau(\pi).
$$
Hence
$$
\sum_{n=0}^{\infty}
\E\Big[\int_0^T
D_h(\pi^{n+1}_s\mid\pi^n_s) \, \d s \Big]
<\infty,
$$
\noindent which yields the desired conclusion.

\subsection{Proof of Theorem \ref{thm:relconvex}}

Fix $\pi,\pi'\in\mathcal A_{\mathscr C}$ and set
\[
\delta X_t:=X_t^{\pi'}-X_t^\pi,
\qquad
\delta b_t:=b_t(X_t^{\pi'},\mu_t^{\pi'},\pi_t')
-b_t(X_t^\pi,\mu_t^\pi,\pi_t),
\]
with the analogous notation $\delta\sigma_t$. Let
\[
\mathcal G_t^\pi
:=(D_xH_t^0)(\Theta_t^\pi,\pi_t)
+\widetilde\E\big[
(\partial_\mu H_t^0)
(\widetilde\Theta_t^\pi,\widetilde\pi_t)(X_t^\pi)
\big]
\]
denote the generator of the adjoint equation. By the definition of
$\mathfrak R_g$ and an exchange of the original and copy variables in the
Lions-derivative term,
\begin{equation}\label{eq:terminal-performance-remainder}
\E\big[
g(X_T^{\pi'},\mu_T^{\pi'})-g(X_T^\pi,\mu_T^\pi)
\big]
=\E\big[(Y_T^\pi)^{\trans}\delta X_T\big]
+\mathfrak R_g(X_T^{\pi'}\mid X_T^\pi).
\end{equation}

Apply It\^o's formula to
$t\mapsto(Y_t^\pi)^{\trans}\delta X_t$. The local martingale terms are true
martingales under either of the two regularity regimes in the statement, and
therefore vanish after taking expectations. Since $\delta X_0=0$, we obtain
\begin{align}
\E\big[(Y_T^\pi)^{\trans}\delta X_T\big]
={}&\E\int_0^T\Big[
(Y_t^\pi)^{\trans}\delta b_t
+\operatorname{tr}(\delta\sigma_t^{\trans}Z_t^\pi)
-\langle\mathcal G_t^\pi,\delta X_t\rangle
\Big]\,\d t.
\label{eq:Ito-performance-difference}
\end{align}
Combining~\eqref{eq:terminal-performance-remainder} and
\eqref{eq:Ito-performance-difference} with the definition of $J^0$, and
exchanging the original and copy variables once more in the law-derivative
term, gives the exact performance difference identity
\begin{align}
&J^0(\pi')-J^0(\pi)
-\E\int_0^T\int_A
\frac{\delta H_t^0}{\delta m}
(\Theta_t^\pi,\pi_t,a)
(\pi_t'-\pi_t)(\d a)\,\d t
\nonumber\\
&\qquad=
\mathfrak R_g(X_T^{\pi'}\mid X_T^\pi)
+\int_0^T\mathfrak R_{H,t}^\pi(\pi'\mid\pi)\,\d t.
\label{eq:performance-difference-remainder}
\end{align}
Assumption~\ref{ass:relative-semiconvexity} therefore yields
\begin{align*}
J^0(\pi')-J^0(\pi)
\geq{}&
\E\int_0^T\int_A
\frac{\delta H_t^0}{\delta m}
(\Theta_t^\pi,\pi_t,a)
(\pi_t'-\pi_t)(\d a)\,\d t
-\rho\mathcal D_h(\pi'\mid\pi).
\end{align*}
Finally, the Bregman identity gives
\begin{align*}
\E\int_0^T\bigl(h(\pi_t')-h(\pi_t)\bigr)\,\d t
={}&\E\int_0^T\int_A
\frac{\delta h}{\delta m}(\pi_t,a)
(\pi_t'-\pi_t)(\d a)\,\d t
+\mathcal D_h(\pi'\mid\pi).
\end{align*}
Multiplying by $\tau$, adding the two inequalities, and using the definition
of $H^\tau$ proves~\eqref{eq:relative-semiconvexity-Jtau}.
\subsection{Proof of Theorem \ref{thm:main}}\label{sec:proof-main}

The following standard three-point inequality will be useful in the proof of
Theorem~\ref{thm:main}; see~\cite[Lemma~3]{NEURIPS2022_6e3daaec} or
\cite[Lemma~A.2]{KERIMKULOV2025104765}.

\begin{Lemma}[Three-point inequality]\label{lem:3pts}
Let $\nu\in C$ and let $G\colon C\to\R$ be convex with a flat derivative $\frac{\delta G}{\delta m}$.
If $\bar m\in\arg\min_{m\in C}\big(G(m)+D_h(m\mid\nu)\big)$, then for all $m'\in C$,
\[
  G(m')+D_h(m'\mid\nu)\ \ge\ G(\bar m)+D_h(m'\mid\bar m)+D_h(\bar m\mid\nu).
\]
\end{Lemma}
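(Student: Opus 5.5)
The plan is to derive the inequality from the first-order optimality condition of $\bar m$, combined with convexity of $G$ and the three-point identity for Bregman divergences. Set $\Phi(m):=G(m)+D_h(m\mid\nu)$. For $m'\in C$ and $\varepsilon\in(0,1]$, put $m_\varepsilon:=\bar m+\varepsilon(m'-\bar m)\in C$; this uses convexity of $C$. Minimality of $\bar m$ gives $\Phi(m_\varepsilon)-\Phi(\bar m)\geq0$. Dividing by $\varepsilon$ and letting $\varepsilon\downarrow0$, using the flat derivatives of $G$ and $h$ (note $D_h(\cdot\mid\nu)$ differs from $h$ by an affine functional), we obtain the variational inequality
\[
\int\Big[\frac{\delta G}{\delta m}(\bar m,a)+\frac{\delta h}{\delta m}(\bar m,a)-\frac{\delta h}{\delta m}(\nu,a)\Big](m'-\bar m)(\d a)\geq0.
\]

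Next, convexity of $G$ together with existence of its flat derivative yields
\[
G(m')\geq G(\bar m)+\int\frac{\delta G}{\delta m}(\bar m,a)(m'-\bar m)(\d a).
\]
To see this, note that $\varepsilon\mapsto G(m_\varepsilon)$ is convex on $[0,1]$, so its right derivative at $0$ lies below the chord slope $G(m')-G(\bar m)$.

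Then use the purely algebraic three-point identity, which follows directly from the definition of $D_h$ by expanding all terms:
\[
D_h(m'\mid\nu)-D_h(m'\mid\bar m)-D_h(\bar m\mid\nu)=\int\Big[\frac{\delta h}{\delta m}(\bar m,a)-\frac{\delta h}{\delta m}(\nu,a)\Big](m'-\bar m)(\d a).
\]
Adding the convexity inequality for $G$ to this identity, the right-hand sides sum to the left-hand side of the variational inequality, which is nonnegative. This gives exactly $G(m')+D_h(m'\mid\nu)\geq G(\bar m)+D_h(m'\mid\bar m)+D_h(\bar m\mid\nu)$.

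The main obstacle is finiteness and justification of the limits. If $D_h(m'\mid\nu)=\infty$ the claim is trivial, so assume it is finite. Then convexity of $h$ keeps $D_h(m_\varepsilon\mid\nu)$ finite along the segment. One must also ensure that the integrals $\int\frac{\delta h}{\delta m}(\bar m,a)(m'-\bar m)(\d a)$ and $\int\frac{\delta h}{\delta m}(\nu,a)(m'-\bar m)(\d a)$ are well defined; this follows from the flat-derivative convention assumed for $h$ on $C$, so that the directional derivative limits exist. With these integrability points settled, the rest is the algebra above.
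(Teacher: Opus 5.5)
Your proof is correct and is the standard argument. The paper gives no proof of its own; it refers to the three-point lemma in the mirror-descent literature, whose proof goes exactly through the first-order optimality condition at $\bar m$, the tangent-line inequality for the convex $G$, and the algebraic three-point identity for $D_h$. One point you leave implicit is that $D_h(\bar m\mid\nu)<\infty$. This follows from $G(\bar m)+D_h(\bar m\mid\nu)\le G(\nu)+D_h(\nu\mid\nu)=G(\nu)<\infty$, and it is needed both for finiteness along the segment and to rule out $\infty-\infty$ in the identity.
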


\noindent \emph{\textbf{Proof of Theorem \ref{thm:main}.}}

Fix $\pi'\in\mathcal{A}_{\mathscr C}$ such that
$\mathcal{D}_h(\pi'\mid\pi^n)<\infty$ for every $n\geq0$. We first derive a recursive estimate that will be used in both cases. Applying Theorem~\ref{thm:relsmooth} with $\pi=\pi^n$ and $\pi'=\pi^{n+1}$ gives
\begin{align*}
J^\tau(\pi^{n+1})
&\leq J^\tau(\pi^n)
+\E\int_0^T\int_A
\frac{\delta H^\tau_s}{\delta m}
\bigl(\Theta_s(\pi^n),\pi^n_s,a\bigr)
(\pi^{n+1}_s-\pi^n_s)(\d a),\d s \, +L\mathcal{D}_h(\pi^{n+1}\mid\pi^n).
\end{align*}

On the other hand, the definition of the mirror descent update and Lemma~\ref{lem:3pts}, applied pointwise in $(\omega,s)$ and then integrated, yield
\begin{align*}
&\E\int_0^T\int_A
\frac{\delta H^\tau_s}{\delta m}
\bigl(\Theta_s(\pi^n),\pi^n_s,a\bigr)
(\pi^{n+1}_s-\pi^n_s)(\d a)\,\d s
+\lambda\mathcal{D}_h(\pi^{n+1}\mid\pi^n)
\\
&\qquad\leq
\E\int_0^T\int_A
\frac{\delta H^\tau_s}{\delta m}
\bigl(\Theta_s(\pi^n),\pi^n_s,a\bigr)
(\pi'_s-\pi^n_s)(\d a)\,\d s
+\lambda\mathcal{D}_h(\pi'\mid\pi^n)
-\lambda\mathcal{D}_h(\pi'\mid\pi^{n+1}).
\end{align*}

Combining the two inequalities and using $L\leq\lambda$, we obtain
\begin{align*}
J^\tau(\pi^{n+1})
&\leq J^\tau(\pi^n)
+\E\int_0^T\int_A
\frac{\delta H^\tau_s}{\delta m}
\bigl(\Theta_s(\pi^n),\pi^n_s,a\bigr)
(\pi'_s-\pi^n_s)(\d a)\,\d s\\
&\qquad +\lambda\mathcal{D}_h(\pi'\mid\pi^n)
-\lambda\mathcal{D}_h(\pi'\mid\pi^{n+1}).
\end{align*}

Set $\alpha:=\tau-\rho$. By the relative convexity of $J^\tau$ given in
Theorem~\ref{thm:relconvex},
\[
J^\tau(\pi^n) + \E\!\int_0^T\!\!\int_A\frac{\delta H^\tau_s}{\delta m}(\Theta_s(\pi^n),\pi^n_s,a)(\pi'_s-\pi^n_s)(\d a)\,\d s
\;\le\; J^\tau(\pi') - \alpha \mathcal{D}_h(\pi'\mid\pi^{n}).
\]
Consequently,
\begin{equation}\label{eq:keyRec}
J^\tau(\pi^{n+1}) - J^\tau(\pi')
\;\le\; \lambda\!\left(\mathcal{D}_h(\pi'\mid\pi^n) - \mathcal{D}_h(\pi'\mid\pi^{n+1})\right)
   - \alpha\, \mathcal{D}_h(\pi'\mid\pi^n).
\end{equation}

\noindent\textbf{Case $\alpha=0$.}
Taking $\pi'=\pi$ in the preceding inequality gives
\[
J^\tau(\pi^{n+1}) - J^\tau(\pi)
\;\le\; \lambda\!\left(\mathcal{D}_h(\pi \mid \pi^n) - \mathcal{D}_h(\pi \mid \pi^{n+1}) \right).
\]
Summing from $n=0$ to $m-1$ gives
\[
\sum_{n=0}^{m-1}\bigl(J^\tau(\pi^{n+1})-J^\tau(\pi)\bigr) \;\le\; \lambda\Bigl( \mathcal{D}_h(\pi \mid\pi^0) - \mathcal{D}_h(\pi \mid \pi^m ) \Bigr).
\]
By Theorem~\ref{thm:dissipation}, the sequence $\{J^\tau(\pi^n)\}$ is
non-increasing. Hence,
\[
m\bigl(J^\tau(\pi^m)-J^\tau(\pi)\bigr) \;\le\; \lambda\,  \mathcal{D}_h(\pi \mid \pi^0).
\]
This proves the asserted $\mathcal{O}(1/m)$ convergence rate.

\medskip
\noindent\textbf{Case $\alpha>0$.}

Let $\pi'=\pi^* \in \arg\min J^\tau$. Since $J^\tau(\pi^{n+1})\ge J^\tau(\pi^*)$, inequality~\eqref{eq:keyRec} implies
\[
0
\leq
(\lambda-\alpha)\mathcal{D}_h(\pi^\star \mid \pi^n)
-\lambda\mathcal{D}_h(\pi^\star \mid \pi^{n+1}).
\]
Thus
\[
\mathcal{D}_h(\pi^* \mid\pi^{n+1}) \;\le\; \Bigl(1-\frac{\alpha}{\lambda}\Bigr)\, \mathcal{D}_h(\pi^*\mid \pi^n).
\]
Iterating this estimate gives
\begin{equation}\label{eq:expoBreg}
\mathcal{D}_h(\pi^* \mid \pi^m )  \;\le\; \Bigl(1-\frac{\alpha}{\lambda}\Bigr)^{m}\,  \mathcal{D}_h(\pi^* \mid \pi^0), \quad m\geq0.
\end{equation}

In particular, the Bregman divergence from the iterates to the optimal control converges to zero at a geometric rate.
Coming back to~\eqref{eq:keyRec},
\[
J^\tau(\pi^{n+1})-J^\tau(\pi^*) \;\le\; (\lambda-\alpha) \mathcal{D}_h(\pi^* \mid \pi^n).
\]
which combined with~\eqref{eq:expoBreg} yields
\[
J^\tau(\pi^{n+1})-J^\tau(\pi^*) \;\le\; \lambda\Bigl(1-\frac{\alpha}{\lambda}\Bigr)^{n+1} \mathcal{D}_h(\pi^*\mid\pi^0).
\]

Together with the optimality of $\pi^*$, this gives the asserted estimate.

\subsection{Proofs for the examples of regularisers}
\label{sec:proofs-regularisers}

\begin{proof}[Proof of Proposition~\ref{prop:KL}]
\leavevmode\par

\noindent \emph{Proof of regime (i).}

The proof is the same as that of
\cite[Proposition~2.17 and Section~3.6]{KERIMKULOV2025104765}. Indeed, the
entropy mirror step has, up to normalisation, the density
\[
\frac{\d\pi_t^{n+1}}{\d\varrho}(a)
\propto
\left(\frac{\d\pi_t^n}{\d\varrho}(a)\right)^{1-\tau/\lambda}
\exp\left(-\frac{\mathscr G_t^{\pi^n}(a)}{\lambda}\right).
\]
The only point to check in the present MKV setting is the
integrability of its logarithm. The assumed bounds on the flat derivatives directly give
\[
\|\mathscr G_t^{\pi^n}\|_{L^\infty(A)}
\leq C\bigl(1+|Y_t^{\pi^n}|+|Z_t^{\pi^n}|\bigr),
\]
and Lemma~\ref{lem:Ybound} makes the right-hand side integrable on
$\Omega\times[0,T]$. The induction argument of the cited proof therefore
applies without further change.

\medskip

\noindent\emph{Proof of regime (ii).}

Set $q:=1-\tau/\lambda\in(0,1)$. Suppose
inductively that $\pi^n\in\mathcal A_{\mathscr C}$ and write
$\pi_t^n=f_t^n\varrho$. The first-order condition for the entropy mirror
step gives
\begin{equation}\label{eq:quadratic-entropy-update}
f_t^{n+1}(a)
=\frac{1}{C_t^n}(f_t^n(a))^q
\exp\left(-\frac{\mathscr G_t^{\pi^n}(a)}{\lambda}\right),
\end{equation}
where $C_t^n$ is the normalising constant. The lower bound in
\eqref{eq:quadratic-action-gradient} and Jensen's inequality give
\[
0<C_t^n
\leq e^{\zeta_t^{\pi^n}/\lambda}
\int_{\mathbb R^k}(f_t^n)^q\,\d\varrho
\leq e^{\zeta_t^{\pi^n}/\lambda}.
\]
Thus~\eqref{eq:quadratic-entropy-update} defines a strictly positive
probability density. Moreover, using $x^q\leq1+x$ and the coercive factor
generated by~\eqref{eq:quadratic-action-gradient}, we obtain
\[
\int_{\mathbb R^k}|a|^2f_t^{n+1}(a)\,\varrho(\d a)<\infty.
\]

The logarithmic recursion is
\begin{equation}\label{eq:quadratic-log-density-update}
\log f_t^{n+1}
=q\log f_t^n-\lambda^{-1}\mathscr G_t^{\pi^n}-\log C_t^n.
\end{equation}
The elementary estimate
$x^q|\log x|\leq C_q(1+x)$, $x>0$, shows that
\[
\int_{\mathbb R^k}|\log f_t^n(a)|\,\pi_t^{n+1}(\d a)<\infty
\]
for almost every $(t,\omega)$. This is the precise cross-logarithmic
integrability needed in~\eqref{eq:quadratic-log-density-update}.

To obtain integrability on $\Omega\times[0,T]$, we do not integrate the
preceding pointwise bound, which would contain $(C_t^n)^{-1}$. Instead, Jensen's
inequality and the two sided-bounds in~\eqref{eq:quadratic-action-gradient} give
\begin{equation}\label{eq:normalisation-constant}
-\frac{\tau}{\lambda}h(\pi_t^n)
-\frac1\lambda\int_{\mathbb R^k}\mathscr G_t^{\pi^n}\,\d\pi_t^n
\leq\log C_t^n\leq\frac{\zeta_t^{\pi^n}}{\lambda}.
\end{equation}
Set
\[
H_t^j:=D_{\rm KL}(\pi_t^j\mid\varrho)
=\int_{\mathbb R^k}f_t^j\log f_t^j\,\d\varrho.
\]
Using $\pi_t^n$ as a competitor in the Gibbs variational identity gives
\begin{align*}
&\int_{\mathbb R^k}\mathscr G_t^{\pi^n}\,
\d(\pi_t^{n+1}-\pi_t^n)
+\tau\left(\int_{\mathbb R^k}\log f_t^n\,\d\pi_t^{n+1}-H_t^n\right)
\\
&\hspace{45mm}
+\lambda D_{\rm KL}(\pi_t^{n+1}\mid\pi_t^n)\leq0.
\end{align*}
Since
\[
D_{\rm KL}(\pi_t^{n+1}\mid\pi_t^n)
=H_t^{n+1}-\int_{\mathbb R^k}\log f_t^n\,\d\pi_t^{n+1}
\]
and the entropy variational inequality gives
$\int\log f_t^n\,\d\pi_t^{n+1}\leq H_t^{n+1}$, and recalling that $\lambda>\tau$, it follows that
\[
\int\mathscr G_t^{\pi^n}\,\d\pi_t^{n+1}+\tau H_t^{n+1}
\leq
\int\mathscr G_t^{\pi^n}\,\d\pi_t^n+\tau H_t^n.
\]
Consequently, the preceding inequality together with the two sided-bounds~\eqref{eq:quadratic-action-gradient} gives
\begin{equation}\label{eq:qent-moment-entropy-step}
\kappa^{-1}M_{2,A}(\pi_t^{n+1})^2+\tau H_t^{n+1}
\leq \kappa M_{2,A}(\pi_t^n)^2+\tau H_t^n+2\zeta_t^{\pi^n}.
\end{equation}
To control the relative entropy between successive iterates, integrate
\eqref{eq:quadratic-log-density-update} against $\pi_t^{n+1}$ and eliminate
$\int\log f_t^n\,\d\pi_t^{n+1}$. This gives
\[
D_{\rm KL}(\pi_t^{n+1}\mid\pi_t^n)
=-\frac{1-q}{q}H_t^{n+1}
-\frac{1}{q\lambda}\int\mathscr G_t^{\pi^n}\,\d\pi_t^{n+1}
-\frac1q\log C_t^n.
\]
Since $H_t^{n+1}\geq0$, the lower bounds \eqref{eq:quadratic-action-gradient} on
$\mathscr G_t^{\pi^n}$ and \eqref{eq:normalisation-constant} on $\log C_t^n$, followed by the upper bound on
$\int\mathscr G_t^{\pi^n}\,\d\pi_t^n$, yield
\begin{equation}\label{eq:qent-successive-kl-step}
D_{\rm KL}(\pi_t^{n+1}\mid\pi_t^n)
\leq \frac{\tau}{q\lambda}H_t^n
+\frac{\kappa}{q\lambda}M_{2,A}(\pi_t^n)^2
+\frac{2}{q\lambda}\zeta_t^{\pi^n}.
\end{equation}
Finally, since $\varrho$ is a probability measure,
$\int f|\log f|\,\d\varrho
\leq D_{\rm KL}(f\varrho\mid\varrho)+2/e$.
Combining~\eqref{eq:qent-moment-entropy-step} and
\eqref{eq:qent-successive-kl-step}, and integrating in $(t,\omega)$, gives
\begin{align*}
&\E\int_0^T\left[
M_{2,A}(\pi_t^{n+1})^2
+\int_{\mathbb R^k}f_t^{n+1}|\log f_t^{n+1}|\,\d\varrho
+D_{\rm KL}(\pi_t^{n+1}\mid\pi_t^n)
\right]\d t
\\
&\qquad\leq C_{q,\kappa,\lambda}\E\int_0^T\left[
1+M_{2,A}(\pi_t^n)^2
+\int_{\mathbb R^k}f_t^n|\log f_t^n|\,\d\varrho
+\zeta_t^{\pi^n}\right]\d t<\infty.
\end{align*}

Finally,
\[
\log\frac{f_t^{n+1}}{f_t^n}
=-\frac{\tau}{\lambda}\log f_t^n
-\frac1\lambda\mathscr G_t^{\pi^n}-\log C_t^n.
\]
The preceding estimates show that the right-hand side is integrable with
respect to $\pi_t^{n+1}(\d a)\,\d t\,\d\mathbb P$. Consequently,
\[
\mathcal D_h(\pi^{n+1}\mid\pi^n)
=\E\int_0^T
D_{\rm KL}(\pi_t^{n+1}\mid\pi_t^n)\,\d t<\infty,
\]
and $\pi^{n+1}\in\mathcal A_{\mathscr C}$. This closes the induction and
proves Assumption~\ref{ass:iterates} in regime (ii).

\medskip

\noindent \emph{Proof of regime (iii).}

The action-dependent part of the unregularised
Hamiltonian is
\[
a^{\trans}N_ta+a^{\trans}\ell_t^n.
\]
Hence the same first-order condition as in regime (ii), but now relative to the Lebesgue measure, reads
\begin{equation}\label{eq:gaussian-density-update-proof}
p_t^{n+1}(a)
\propto
\bigl(p_t^n(a)\bigr)^{1-\tau/\lambda}
\exp\left[-\frac1\lambda
\bigl(a^{\trans}N_ta+a^{\trans}\ell_t^n\bigr)\right].
\end{equation}
Set again $q:=1-\tau/\lambda\in[0,1]$. If
$\pi_t^n=\mathcal N(u_t^n,\Gamma_t^n)$, its density can be written, up to a
normalising constant, as
\[
p_t^n(a)\propto
\exp\left(-\frac12a^{\trans}R_t^na+(\eta_t^n)^{\trans}a\right).
\]
Substitution into~\eqref{eq:gaussian-density-update-proof} and completion of
the square give
\[
p_t^{n+1}(a)\propto
\exp\left(-\frac12a^{\trans}
\left(qR_t^n+\frac2\lambda N_t\right)a
+\left(q\eta_t^n-\frac1\lambda\ell_t^n\right)^{\trans}a\right),
\]
which proves~\eqref{eq:abstract-gaussian-update}. In particular,
$R_t^{n+1}\succeq(2\nu/\lambda)I_k$, so the new density is normalisable and
$\pi_t^{n+1}$ is nondegenerate.

We next verify that the recursion remains in the admissible class. Let
$\overline N:=\sup_{t\leq T}\|N_t\|$. Iterating the precision recursion
gives
\begin{equation}\label{eq:precision-iteration-bound}
R_t^n=q^nR_t^0+\frac2\lambda\sum_{j=0}^{n-1}q^jN_t.
\end{equation}
If $\tau>0$, this identity yields the uniform spectral bounds
\[
\min\left\{\overline\gamma^{-1},\frac{2\nu}{\tau}\right\}I_k
\preceq R_t^n
\preceq
\max\left\{\underline\gamma^{-1},\frac{2\overline N}{\tau}\right\}I_k.
\]
If $\tau=0$, then
$\overline\gamma^{-1}I_k\preceq R_t^n
\preceq(\underline\gamma^{-1}+2n\overline N/\lambda)I_k$.
Thus, for every $n$, both $R^n$ and $\Gamma^n=(R^n)^{-1}$ are bounded in
$(t,\omega)$; the bounds are uniform in $n$ when $\tau>0$.

Assume inductively that $\pi^n$ is admissible. Proposition~\ref{prop:wellposed}
then gives $X^{\pi^n}\in S^2$ and
$(Y^{\pi^n},Z^{\pi^n})\in S^2\times H^2$. Since all LQ coefficients are
bounded,
\begin{equation}\label{eq:lq-linear-energy-integrability}
\E\int_0^T|\ell_t^n|^2\,\d t<\infty.
\end{equation}
Moreover, $\eta^0=R^0u^0$ belongs to
$L^2(\Omega\times[0,T])$. The second recursion in
\eqref{eq:abstract-gaussian-update}, followed by
\eqref{eq:lq-linear-energy-integrability}, therefore implies inductively
that $\eta^n$, and hence $u^n=\Gamma^n\eta^n$, belongs to this space. It
follows that
\[
\E\int_0^T\int_{\mathbb R^k}|a|^2\pi_t^n(\d a)\,\d t
=\E\int_0^T\bigl(|u_t^n|^2+\operatorname{tr}(\Gamma_t^n)\bigr)\,\d t
<\infty,
\]
so $\pi^n$ is admissible and the induction closes.

Finally, the Bregman divergence of negative entropy is relative entropy. For
successive Gaussian iterates,
\begin{align*}
D_h(\pi_t^{n+1}\mid\pi_t^n)
=\frac12\Big(&\operatorname{tr}(R_t^n\Gamma_t^{n+1})
+(u_t^n-u_t^{n+1})^{\trans}R_t^n(u_t^n-u_t^{n+1})
\\
&-k+\log\frac{\det\Gamma_t^n}{\det\Gamma_t^{n+1}}\Big).
\end{align*}
The spectral bounds above and the square integrability of the means show
that its expectation is integrable over $[0,T]$. Consequently,
$\mathcal D_h(\pi^{n+1}\mid\pi^n)<\infty$ for every $n$, which proves
Assumption~\ref{ass:iterates}.
\end{proof}

\begin{proof}[Proof of Proposition~\ref{prop:chi2}]
Write $q_t^n=\d\pi_t^n/\d\varrho$ and let
\[
\mathcal K:=\left\{q\in L^2(\varrho):q\geq0,\
\int_Aq\,\d\varrho=1\right\}.
\]
This is a nonempty closed convex subset of $L^2(\varrho)$. A normalised flat
derivative of $h$ is
\[
 \frac{\delta h}{\delta m}(m,a)
 =q(a)-\int_Aq(a')^2\,\varrho(\d a'),
 \qquad q=\frac{\d m}{\d\varrho}.
\]
Thus, up to additive constants in $a$,
$\delta h/\delta m(m,\cdot)=q$ and
$D_h(m'\mid m)=\frac12\|q'-q\|_{L^2_\varrho}^2$. The standard Hilbert-space
calculation in \cite[Proposition~2.20]{KERIMKULOV2025104765} shows that the
optimality condition for the mirror step is
\begin{equation}\label{eq:chi2-variational-inequality}
 \left\langle
 \frac{\delta H_t^\tau}{\delta m}
 (\Theta_t^{\pi^n},\pi_t^n,\cdot)
 +\lambda(q_t^{n+1}-q_t^n),q-q_t^{n+1}
 \right\rangle_{L^2_\varrho}\geq0,
 \qquad q\in\mathcal K,
\end{equation}
or, equivalently,
\begin{equation}\label{eq:chi2-projection}
q_t^{n+1}
=\operatorname{Proj}_{\mathcal K}\left(
q_t^n-\lambda^{-1}
\frac{\delta H_t^\tau}{\delta m}
(\Theta_t^{\pi^n},\pi_t^n,\cdot)
\right).
\end{equation}
The projection is $1$-Lipschitz. It therefore preserves progressive
measurability and, since $\operatorname{Proj}_{\mathcal K}(q_t^n)=q_t^n$,
\begin{equation}\label{eq:chi2-projection-estimate}
 \|q_t^{n+1}-q_t^n\|_{L^2_\varrho}
 \leq\lambda^{-1}\left\|
 \frac{\delta H_t^\tau}{\delta m}
 (\Theta_t^{\pi^n},\pi_t^n,\cdot)
 \right\|_{L^2_\varrho},
 \qquad
 \|q_t^{n+1}\|_{L^2_\varrho}
 \leq\|q_t^n\|_{L^2_\varrho}
 +\lambda^{-1}\left\|\frac{\delta H_t^\tau}{\delta m}
 (\Theta_t^{\pi^n},\pi_t^n,\cdot)\right\|_{L^2_\varrho}.
\end{equation}

\noindent\emph{Bounded-derivative regime.}
Apart from evaluating the stochastic action gradient along the
MKV FBSDE, this is precisely the setting treated in
\cite[Proposition~2.20]{KERIMKULOV2025104765}. The assumed
$L^2_\varrho$ envelopes and Proposition~\ref{prop:wellposed} give
\[
 \left\|\frac{\delta H_t^\tau}{\delta m}
 (\Theta_t^{\pi^n},\pi_t^n,\cdot)\right\|_{L^2_\varrho}
 \leq C(1+|Y_t^{\pi^n}|+|Z_t^{\pi^n}|)
 +\tau\|q_t^n\|_{L^2_\varrho}.
\]
Consequently, \eqref{eq:chi2-projection-estimate}, integrated in
$(t,\omega)$, closes the induction. We refer to the cited proof for the
remaining standard projection argument.

\medskip

\noindent\emph{Quadratic-growth regime.}
Since additive constants in $a$ are immaterial, we may use the
representative
\[
 \frac{\delta H_t^\tau}{\delta m}
 (\Theta_t^{\pi^n},\pi_t^n,a)
 =\mathscr G_t^{\pi^n}(a)+\tau q_t^n(a).
\]
Writing $M_j(\varrho)=(\int_{\mathbb R^k}|a|^j\varrho(\d a))^{1/j}$,
\eqref{eq:chi2-quadratic-action-gradient} gives
\begin{equation}\label{eq:chi2-quadratic-gradient-L2}
 \left\|\frac{\delta H_t^\tau}{\delta m}
 (\Theta_t^{\pi^n},\pi_t^n,\cdot)\right\|_{L^2_\varrho}
 \leq \tau\|q_t^n\|_{L^2_\varrho}
 +\kappa M_4(\varrho)^2
 +\zeta_t^{\pi^n}\bigl(1+M_2(\varrho)\bigr).
\end{equation}
Thus \eqref{eq:chi2-projection-estimate},
\eqref{eq:chi2-quadratic-gradient-L2}, and the assumed square
integrability of $\zeta^{\pi^n}$ imply
$\E\int_0^T\|q_t^{n+1}\|_{L^2_\varrho}^2\,\d t<\infty$. Moreover,
\[
 \int_{\mathbb R^k}|a|^2\pi_t^{n+1}(\d a)
 \leq
 \left(\int_{\mathbb R^k}|a|^4\varrho(\d a)\right)^{1/2}
 \|q_t^{n+1}\|_{L^2_\varrho},
\]
while $h(\pi_t^{n+1})=\frac12\|q_t^{n+1}-1\|_{L^2_\varrho}^2$ and
\[
 D_h(\pi_t^{n+1}\mid\pi_t^n)
 =\tfrac12\|q_t^{n+1}-q_t^n\|_{L^2_\varrho}^2.
\]
After integration in $(t,\omega)$ these quantities are finite. Induction
therefore proves both admissibility of every iterate and
Assumption~\ref{ass:iterates}. Finally, in the affine-LQ model the
unregularised action gradient is of the form
$a^{\trans}N_ta+a^{\trans}\ell_t^\pi$. Boundedness of $N$ and
Proposition~\ref{prop:wellposed}, which gives
$\E\int_0^T|\ell_t^\pi|^2\,\d t<\infty$, verify
\eqref{eq:chi2-quadratic-action-gradient}. Hence that model is an immediate
special case of regime~(ii).
\end{proof}

\begin{proof}[Proof of Proposition~\ref{prop:eot}]
By \cite[Proposition~2.22 and its proof]{KERIMKULOV2025104765}, $h$ is
finite and continuous on $\mathcal P(A)$, its flat derivative is given by
\eqref{eq:eot-flat-derivative}, and $m\mapsto\phi[m]$ is continuous from
$\mathcal P(A)$, endowed with weak convergence, into $C(A)$. Since
$\mathcal P(A)$ is compact,
\[
\sup_{m\in\mathcal P(A)}|h(m)|<\infty,
\qquad
\sup_{m\in\mathcal P(A)}\|\phi[m]\|_{C(A)}<\infty.
\]

We only detail the points added by the mean-field and linear-growth setting.
For $m,m'\in\mathcal P(A)$, set
\begin{align*}
\mathcal F(t,x,\mu,y,z,m;m')
:={}&\int_A\left[
\frac{\delta H_t^0}{\delta m}(x,\mu,y,z,m,a)
+(\tau-\lambda)\phi[m](a)
\right]m'(\d a)+\lambda h(m').
\end{align*}
Terms independent of $m'$ have been omitted. The preceding continuity
properties, together with the standing joint-measurability assumptions on
the flat derivatives, show that $\mathcal F$ is measurable in
$(t,x,\mu,y,z,m)$ and continuous in $m'$. The measurable maximum theorem on
the compact Polish space $\mathcal P(A)$ therefore provides a measurable
selector of its minimisers. Evaluated at
$(X_t^{\pi^n},\mu_t^{\pi^n},Y_t^{\pi^n},Z_t^{\pi^n},\pi_t^n)$, this selector
defines a progressively measurable solution $\pi^{n+1}$ of the mirror step.

It remains to verify integrability, which is the only substantive change
from \cite[Proposition~2.22]{KERIMKULOV2025104765}. Compactness of $A$, the
growth assumption in the statement, and the definition of the stochastic
action gradient yield
\[
 \|\mathscr G_t^{\pi^n}\|_{C(A)}
 \leq C\bigl(1+|X_t^{\pi^n}|+M_2(\mu_t^{\pi^n})\bigr)
 \bigl(1+|Y_t^{\pi^n}|+|Z_t^{\pi^n}|\bigr).
\]
Proposition~\ref{prop:wellposed} and Cauchy--Schwarz consequently give
\[
\E\int_0^T\|\mathscr G_t^{\pi^n}\|_{C(A)}\,\d t<\infty.
\]
Finally, the uniform bounds above imply, for all $m,m'\in\mathcal P(A)$,
\[
 |h(m)|+D_h(m'\mid m)
 \leq C\left(1+\sup_{\nu\in\mathcal P(A)}\|\phi[\nu]\|_{C(A)}\right).
\]
Thus $\E\int_0^T|h(\pi_t^{n+1})|\,\d t<\infty$ and
$\mathcal D_h(\pi^{n+1}\mid\pi^n)<\infty$. Hence
$\pi^{n+1}\in\mathcal A_{\mathscr C}$, and induction proves
Assumption~\ref{ass:iterates}.
\end{proof}

\begin{proof}[Proof of Corollary~\ref{cor:gaussian-lq-smd}]
The precision recursion in Proposition~\ref{prop:KL}(iii), together with
$N_t\succeq\nu I_k$ and the uniform positive definiteness of $R_t^0$, yields
a constant $c>0$, independent of $n$ and $t$, such that
$R_t^n\succeq cI_k$. Hence
\[
\Gamma_t^n=(R_t^n)^{-1}\preceq \overline\Gamma I_k,
\qquad n\geq0,
\]
for some $\overline\Gamma<\infty$. Notice that no upper bound on the
covariance of the comparator is needed here.

For two Gaussian controls $\pi$ and $\pi'$ with respective means $u,u'$ and
with the covariance of $\pi$ bounded by $\overline\Gamma I_k$, the Gaussian
relative-entropy formula gives
\begin{equation}\label{eq:gaussian-mean-kl-control}
|u_t'-u_t|^2
\leq2\overline\Gamma\,
D_{\mathrm{KL}}(\pi_t'\mid\pi_t).
\end{equation}
The difference of the corresponding LQ state equations is linear. Standard
SDE stability therefore yields
\begin{equation}\label{eq:lq-state-control-mean-stability}
\E\left[\sup_{0\leq t\leq T}|X_t^{\pi'}-X_t^\pi|^2\right]
\leq C\E\int_0^T|u_t'-u_t|^2\,\d t.
\end{equation}
Using \eqref{eq:lq-state-control-mean-stability} in
\eqref{eq:LQ-polynomial}, followed by Young's inequality, gives
\[
|\mathcal Q_{\mathrm{LQ}}(\pi',\pi)|
\leq C\E\int_0^T|u_t'-u_t|^2\,\d t
\leq2C\overline\Gamma\mathcal D_h(\pi'\mid\pi).
\]
On the other hand, direct expansion of the quadratic costs, or equivalently
the performance-difference identity and
Proposition~\ref{prop:lq-semiconvexity}, gives
\[
J^\tau(\pi')-J^\tau(\pi)-\d J^\tau(\pi)(\pi'-\pi)
=\mathcal Q_{\mathrm{LQ}}(\pi',\pi)
+\tau\mathcal D_h(\pi'\mid\pi).
\]
This proves \eqref{eq:gaussian-lq-relative-smoothness} with
$L_{\rm LQ}=\tau+2C\overline\Gamma$. Since $\pi^{n+1}$ is Gaussian,
\eqref{eq:gaussian-lq-relative-smoothness} can first be applied with
$\pi'=\pi^{n+1}$; the proof of Theorem~\ref{thm:dissipation} then applies
verbatim and does not use any lower curvature bound. If the stated lower
bound on $\mathcal Q_{\mathrm{LQ}}$ also holds, it gives relative convexity
with modulus $\tau-\rho$, and the algebraic proof of
Theorem~\ref{thm:main} applies to the invariant Gaussian sequence.
\end{proof}

%\section{Appendix}\label{appendix:sec}

% The bibliography is precompiled because HAL does not transfer BibTeX databases
% to arXiv. Keeping the .bbl explicit also avoids a BibTeX invocation on arXiv.

\end{document}